\title[Weak capacity and critical exponents]{Weak capacity and critical exponents} 
\author{Jeff Lindquist}
\email{jeffrey.lindquist@helsinki.fi}
\address{PL 68 (Gustaf H\"allstr\"omin katu 2b) 00014 Helsingin Yliopisto}
\thanks{At the University of Helsinki, the author was partially supported by Academy of Finland grants 297258 and 308759. At the University of California, Los Angeles, the author was partially supported by NSF grants DMS-1506099 and DMS-1162471.}
\newtheorem{thm}{Theorem}[section]
\newtheorem{lemma}[thm]{Lemma}
\newtheorem{cor}[thm]{Corollary}
\newtheorem{prop}[thm]{Proposition}
\theoremstyle{definition}
\newtheorem{defn}[thm]{Definition}
\newtheorem*{ack}{Acknowledgements}
\newtheorem{remark}[thm]{Remark}
\newcommand{\N}{\mathbb{N}}
\newcommand{\R}{\mathbb{R}}
\newcommand{\mH}{\mathscr{H}}
\newcommand{\norm}[1]{\left\lVert#1\right\rVert}
\newcommand{\wpnorm}[1]{\left\lVert#1\right\rVert_{p,\infty}}
\newcommand{\Stwo}{\mathbb{S}^2}
\DeclareMathOperator{\diam}{diam}
\DeclareMathOperator{\ARCdim}{ARCdim}
\DeclareMathOperator{\dist}{dist}
\DeclareMathOperator{\modtwo}{mod_2}
\DeclareMathOperator{\qcap}{wcap_\mathnormal{Q}}
\DeclareMathOperator{\capq}{wcap_\mathnormal{q}}
\DeclareMathOperator{\pcap}{wcap_\mathnormal{p}}
\DeclareMathOperator{\wcap}{wcap}
\DeclareMathOperator{\wcaptwo}{wcap_2}
\DeclareMathOperator{\qmod}{mod_\mathnormal{Q}}
\DeclareMathOperator{\qmodg}{mod_\mathnormal{Q}^G}
\DeclareMathOperator{\modp}{mod_\mathnormal{p}}
\DeclareMathOperator{\Modp}{Mod_\mathnormal{p}}
\DeclareMathOperator{\Iso}{Iso}
\def\XXint#1#2#3{{\setbox0=\hbox{$#1{#2#3}{\int}$ }
\vcenter{\hbox{$#2#3$ }}\kern-.6\wd0}}
\begin{document}
\maketitle

\begin{abstract} 
We investigate critical exponents relating to weak capacity in Ahlfors regular metric measure spaces.  This allows a proof of a weak capacity version of a result by Bonk and Kleiner about the uniformization of metric $2$-spheres.  Using our result, we promote local quasisymmetric equivalence with $\Stwo$ to global quasisymmetric equivalence.  We also use our weak capacity version to derive conditions for quasisymmetric equivalence to $\Stwo$ in the presence of a group action.  We investigate the relation between our defined critical exponents and Ahlfors regular conformal dimension, particularly in the cases where the Combinatorial Loewner Property is present or the space attains its Ahlfors regular conformal dimension.
\end{abstract}
\section{Introduction}

In this paper we expand upon on work in \cite{L1} to study compact, connected, Ahlfors regular metric measure spaces and quasisymmetric maps between these spaces.  The main tool we use is weak $p$-capacity, denoted $\pcap$.  Weak capacity is a generalization of the modulus of path families between sets that has a quasisymmetric robustness property.  The rough idea is to use a hyperbolic filling of a metric space and mirror the definition of path family modulus by using paths connecting sets in a hyperbolic filling as opposed to those in the space itself.  

Ahlfors regular metric spaces and quasisymmetric maps arise readily in the study of Gromov hyperbolic groups.  Gromov hyperbolic groups are finitely generated groups whose Cayley graphs are Gromov hyperbolic metric spaces (see \cite{GH}).  The boundary at infinity of a Cayley graphs are Ahlfors regular metric spaces when equipped with a visual metric.  These boundaries depend on the generating set, but as metric spaces they are quasisymmetrically equivalent.

Hyperbolic fillings are graphs that are analogous to Cayley graphs in the hyperbolic group setting.  The difference is that in the group case we are first given the Cayley graph and construct the boundary, whereas in our setting we start with the ``boundary'' metric space and construct the graph.

Our first result is a generalization of a result by Bonk and Kleiner in \cite{BnK} to the weak capacity setting.  In \cite{BnK}, the question of when a metric space homeomorphic to the $2$-sphere $\Stwo$ is quasisymmetrically equivalent to $\Stwo$ is investigated. Theorem \ref{qs unif thm} provides a characterization of when a compact, connected, $Q$-regular metric measure space is quasisymmetrically equivalent to $\Stwo$.  This characterization is in terms of positively separated open sets, which are open sets $A,B \subseteq Z$ with $\dist(A,B) > 0$.  To state Theorem \ref{qs unif thm}, we use a function $\varphi_2$ which provides control of the weak capacity of two positively separated open sets in terms of their relative distance; that is, for positively separated open sets $A,B$ in $Z$, we have $\wcaptwo(A,B) \leq \varphi_2(\Delta(A,B))$, where $\wcaptwo$ is the weak $2$-capacity and $\Delta(A,B)$ is the relative distance between $A$ and $B$.    We leave the formal definitions to Section \ref{Section preliminaries}.  We also require the condition that our space be linearly locally connected.  This is necessary for the existence of a quasisymmetric map to $\Stwo$.

\begin{thm}\label{qs unif thm}
Let $(Z,d, \mu)$ be an Ahlfors $Q$-regular metric measure space homeomorphic to $\mathbb{S}^2$ that is linearly locally connected.  Suppose there exists $t_0 > 0$ such that $\varphi_2(t_0) < \infty$.  Then, $Z$ is quasisymmetrically equivalent to $\Stwo$.  Conversely, if $Z$ is quasisymmetrically equivalent to $\Stwo$, then $\varphi_2(t)$ is finite for all $t$ and $\varphi_2(t) \to 0$ as $t \to \infty$.
\end{thm}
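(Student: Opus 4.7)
The plan is to follow the overall strategy of Bonk--Kleiner \cite{BnK} but to substitute weak $2$-capacity for classical $2$-modulus throughout, leveraging the quasisymmetric robustness of $\wcaptwo$ where ordinary modulus fails. The two directions are handled separately.

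For the converse, I would appeal to the quasisymmetric invariance of weak $p$-capacity established in \cite{L1}. On $\Stwo$ equipped with the spherical metric and measure, standard Loewner-type estimates give a decaying bound on $\wcaptwo(A,B)$ in terms of $\Delta(A,B)$ that tends to $0$ as the relative distance goes to infinity. Given a quasisymmetric homeomorphism $f: Z \to \Stwo$, the known distortion of relative distance under $f$, combined with the quasisymmetric transformation rules for $\wcaptwo$, allows this bound to be pulled back to $Z$. This yields finiteness of $\varphi_2(t)$ for every $t$ and the decay $\varphi_2(t) \to 0$ as $t \to \infty$.

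For the forward direction, I would construct a hyperbolic filling of $Z$ and work with combinatorial path families in its level graphs, where $\wcaptwo$ is defined directly. The hypothesis $\varphi_2(t_0) < \infty$ furnishes a uniform discrete modulus bound between every pair of positively separated open sets at relative distance $t_0$. Coupling this bound with linear local connectedness and Ahlfors $Q$-regularity, the plan is to show that a suitable sequence of level graphs satisfies the combinatorial hypotheses underlying the Bonk--Kleiner parametrization argument. Producing quasisymmetric homeomorphisms of these approximating graphs onto subdivisions of $\Stwo$ and passing to a limit (using LLC and $Q$-regularity to control equicontinuity and injectivity) then yields the desired quasisymmetric homeomorphism $Z \to \Stwo$.

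The main obstacle is the forward direction, and within it two issues in particular. First, the hypothesis gives a pointwise bound at a single $t_0$, whereas the Bonk--Kleiner machinery needs uniform control across a range of relative distances; bridging this gap will require a self-improvement argument that uses linear local connectedness to propagate the single-scale bound to all scales. Second, $Q$ need not equal $2$, so the weak $2$-capacity hypothesis must be strong enough to effectively force $2$-dimensional quasisymmetric behavior; I would expect that the finiteness of $\varphi_2$ together with Ahlfors $Q$-regularity constrains the admissible $Q$ (or admits a quasisymmetric change of metric to a $2$-regular space), reducing the problem to the setting in which the original Bonk--Kleiner argument applies.
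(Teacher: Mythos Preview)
Your converse direction is essentially the paper's: pull back the $\Stwo$ estimate via the quasisymmetric invariance of $\wcaptwo$ and the quasisymmetric distortion of relative distance. Fine.

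Your forward direction has the right skeleton---run the Bonk--Kleiner circle-packing/quasi-M\"obius argument with $\wcaptwo$ in place of continuous $2$-modulus---and you correctly flag the self-improvement from a single $t_0$ to all $t$ (the paper proves this separately as Theorem~\ref{varphi equivalence}). But there is a genuine gap in the mechanism, and your second ``obstacle'' is a misdiagnosis.

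The sentence ``$\varphi_2(t_0)<\infty$ furnishes a uniform discrete modulus bound'' elides the actual content of the argument. An admissible function $\tau$ for $\wcaptwo(A,B)$ lives on the \emph{entire} filling $X$ and is tested against \emph{infinite} paths; it is not a priori admissible for the combinatorial $2$-modulus on any single level graph $X^n$. The paper bridges this in two nontrivial steps. First, the Main Path Lemma (Lemma~\ref{Last Path Lemma}) shows that from any vertex on a sufficiently deep level one can descend to $A$ (or $B$) along a path of $\tau$-length $<1/4$, so that $2\tau|_n$ becomes admissible for the \emph{discrete} modulus $\modtwo(V_{E'},V_{F'},Y^n)$ at that level. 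Second, a Bonk--Saksman estimate guarantees that for some $n$ in every dyadic block, $\|\tau|_n\|_2^2 \lesssim \|\tau\|_{2,\infty}^2$; this is what converts the \emph{weak} $\ell^{2,\infty}$ bound into a \emph{strong} $\ell^2$ bound on a single level. Without these two ingredients you cannot contradict the Loewner lower bound coming from $\Stwo$ via \cite[Proposition~8.1]{BnK}. Your proposal does not mention either step.

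As for your second obstacle: the value of $Q$ on $Z$ is irrelevant, and no reduction to a $2$-regular metric is needed. The exponent $2$ enters only because the \emph{target} $\Stwo$ is $2$-regular and Loewner; Proposition~8.1 of \cite{BnK} is applied on the $\Stwo$ side, and the resulting discrete $2$-modulus lower bound is transported to $Z$ through the graph isomorphism of the triangulations. The $\wcaptwo$ hypothesis on $Z$ then supplies the contradicting upper bound directly---no constraint on $Q$ is forced or used.
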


From this, we arrive at another proof of a special case of a result in \cite{Su} about metric $2$-spheres which are locally quasisymmetric to subsets of $\Stwo$.  This was also proven in \cite{GW} by creating a quasisymmetric atlas.

\begin{cor} \label{local QS}
Let $(Z,d, \mu)$ be an Ahlfors $Q$-regular metric measure space homeomorphic to $\mathbb{S}^2$ that is linearly locally connected. Suppose for every $z \in Z$ there is a neighborhood $U_z$ which is quasisymmetrically equivalent to a subset of $\Stwo$.  Then, $Z$ is quasisymmetrically equivalent to $\Stwo$.
\end{cor}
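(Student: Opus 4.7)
The strategy is to apply Theorem \ref{qs unif thm}, so it suffices to exhibit some $t_0 > 0$ with $\varphi_2(t_0) < \infty$. By compactness of $Z$, extract a finite subcover $U_1, \dots, U_N$ from $\{U_z\}_{z \in Z}$, with quasisymmetric homeomorphisms $f_i \colon U_i \to V_i \subseteq \Stwo$, and fix a Lebesgue number $\delta > 0$ of this subcover. Since $Z$ has finite diameter $D$, any positively separated pair $(A,B)$ with $\Delta(A,B) \geq t_0 := 2D/\delta$ satisfies $\min(\diam A, \diam B) \leq D/t_0 < \delta/2$, and so the diameter-smaller of the two sets, say $A$, lies entirely in some chart $U_i$.

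Having placed $A$ inside $U_i$, the plan is to \emph{localize} the weak-capacity estimate. Choose an intermediate scale $r$ with $\diam A \ll r \ll \dist(A,B)$ and let $W$ be an annular neighborhood of $A$ at scale $r$, still contained in $U_i$. Since every ``curve'' in the hyperbolic filling that connects $A$ to $B$ must cross $\partial W$, a truncation argument on admissible densities should give $\wcaptwo(A, B) \leq \wcaptwo(A, \partial W)$. The quasisymmetric robustness of $\pcap$ then lets us push $\wcaptwo(A, \partial W)$ through $f_i$ to an estimate on $\wcaptwo(f_i(A), f_i(\partial W))$ in $\Stwo$. Since $\Stwo$ is (trivially) quasisymmetrically equivalent to $\Stwo$, the converse direction of Theorem \ref{qs unif thm} makes the corresponding $\varphi_2^{\Stwo}$ finite everywhere, and taking $t_0$ large enough — which, through the distortion functions of the finitely many $f_i$, forces $\Delta(f_i(A), f_i(\partial W))$ to also be large — produces a uniform upper bound.

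I expect the principal technical obstacle to be precisely this localization step: rigorously justifying the monotonicity $\wcaptwo(A, B) \leq \wcaptwo(A, \partial W)$ at the level of the hyperbolic filling rather than for classical path modulus, and quantifying how the relative distance $\Delta(f_i(A), f_i(\partial W))$ in $\Stwo$ depends on $\Delta(A,B)$ in $Z$ via the quasisymmetric control of the finitely many $f_i$. Once these two quantitative pieces are in place, the conclusion follows by combining the forward direction of Theorem \ref{qs unif thm} applied to $Z$ with its converse applied to $\Stwo$.
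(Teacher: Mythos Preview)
Your high-level plan (finite subcover, Lebesgue number, place the small set $A$ in a chart, localize, transfer to $\Stwo$, invoke the converse of Theorem~\ref{qs unif thm}) matches the paper's, but the two steps you flag as ``technical obstacles'' are in fact genuine gaps, and the paper does not resolve them the way you suggest.

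First, the inequality $\wcaptwo(A,B) \leq \wcaptwo(A,\partial W)$ does not follow from a truncation argument in the hyperbolic filling. Leaving aside that $\partial W$ is not open, the classical reasoning ``every curve from $A$ to $B$ has a subcurve from $A$ to $\partial W$'' has no analogue here: admissibility is tested on bi-infinite paths in $X$ whose two limits lie in $A$ and $B$, and such a path has \emph{no} subpath with a limit in $\partial W$ (or in any open shell replacing it). So a function admissible for $(A,\partial W)$ need not be admissible for $(A,B)$. Second, Theorem~\ref{QS inv qcap} compares $\pcap$ under a \emph{global} quasisymmetry between the two spaces; your $f_i$ is only defined on $U_i$, and weak capacity is computed in the filling of the entire space, so you cannot simply ``push $\wcaptwo(A,\partial W)$ through $f_i$''.

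The paper bypasses both issues simultaneously. It maps $A$ to $A'=f_i(A)$ and takes $B'$ to be $f_i$ of an outer annular piece of $U_i$ together with $\Stwo\setminus V_i$. On $\Stwo$ it does not merely quote finiteness of $\varphi_2^{\Stwo}$: it uses the proof of (ii)$\Rightarrow$(iii) in Theorem~\ref{varphi equivalence} to obtain an admissible $\tau'$ whose support is contained in a localized hull $H(B(a',R))$ (Remark~\ref{T support}). The map $f_i$ induces a quasi-isometry only between $W_X=\{v: B_v\subseteq B(z_i,r_i)\}$ and its image in the filling of $\Stwo$; the paper pulls $\tau'$ back by hand along this partial quasi-isometry and proves admissibility for $(A,B)$ by a case split: if a path $\gamma$ stays in $W_X$, its image connects $A'$ to $B'$; if $\gamma$ leaves $W_X$, its image leaves the support hull $H(B(a',R))$ and can be continued to $B'$ outside that hull at zero $\tau'$-cost. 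This support-localization is exactly what replaces your hoped-for monotonicity, and the manual transfer is what replaces the global quasisymmetric invariance.
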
 

In the presence of a group action, we do not need to control the weak capacity of two sets by their relative distance a priori.   Instead, it is sufficient to require that all pairs of open sets that are at positive distance to one another have finite weak capacity.  This is captured by the critical exponents $Q_w(Z)$ and $Q_w'(Z)$. The quantity $Q_w$ is the infimum of $Q>1$ such that if $p>Q$, then for all positively separated open sets, $\pcap(A,B) < \infty$.  The quantity $Q_w'$ is the infimum of $Q>1$ such that if $p > Q$, then for all positively separated open sets, $\pcap(A,B) \leq \varphi_p(\Delta(A,B))$ where $\varphi_p (t) < \infty$ for all $t$.

\begin{thm} \label{group action thm}
Let $(Z,d,\mu)$ be a compact, connected, Ahlfors regular metric measure space.  Suppose there is a metric space $Y$ quasi-isometric to a hyperbolic filling $X$ of $Z$ and a group $G \subseteq \Iso(Y)$ acting cocompactly on $Y$.  Then, $Q_w(Z) = Q_w'(Z)$.  If the infimum is attained in the definition of $Q_w$, then the infimum is attained in the definition of $Q_w'$.
\end{thm}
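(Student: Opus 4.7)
The inequality $Q_w(Z) \leq Q_w'(Z)$ is immediate from the definitions, since the existence of a control function $\varphi_p$ with $\varphi_p(t) < \infty$ for all $t$ forces $\pcap(A,B) < \infty$ for every positively separated pair. The substance of the theorem lies in the reverse inequality $Q_w'(Z) \leq Q_w(Z)$. Fix $p > Q_w(Z)$ and define
\[\varphi_p(t) := \sup\{\pcap(A,B) : A,B \subseteq Z \text{ open}, \dist(A,B)>0, \Delta(A,B) \geq t\}.\]
The goal is to prove $\varphi_p(t) < \infty$ for every $t > 0$.

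The two inputs I would rely on are (i) the quasisymmetric invariance of weak $p$-capacity up to a multiplicative constant depending only on the distortion function, established in \cite{L1} and exploited throughout the paper, and (ii) the boundary action of $G$ on $Z$: because $G$ acts isometrically and cocompactly on $Y$, which is quasi-isometric to the hyperbolic filling $X$ whose boundary is $Z$, the group $G$ acts on $Z$ by homeomorphisms that are uniformly quasisymmetric. Cocompactness ensures that for any vertex $v \in X$ there is some $g \in G$ moving $v$ within bounded distance of a fixed basepoint $v_0$. Given a pair $(A,B)$ with $\Delta(A,B) \geq t$, I would identify a natural ``separation-scale'' vertex $v_{A,B} \in X$, namely the coarsest level at which $A$ and $B$ can be separated in the hyperbolic filling, and apply the corresponding $g \in G$ to move $v_{A,B}$ near $v_0$. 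This normalizes the pair to $(A',B') := (g(A), g(B))$ whose diameters lie in a bounded interval $[c_1(t), c_2(t)]$ and which satisfies $\dist(A',B') \geq c_3(t) > 0$ and $\Delta(A',B') \geq c_4 t$, with all constants independent of the original pair.

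To conclude, I argue by contradiction. If $\varphi_p(t) = \infty$, choose a sequence $(A_n, B_n)$ with $\Delta(A_n, B_n) \geq t$ and $\pcap(A_n, B_n) \to \infty$. Normalizing via $G$ yields pairs $(A_n', B_n')$, and uniform quasisymmetric invariance gives $\pcap(A_n', B_n') \to \infty$ as well. By compactness of the hyperspace of closed subsets of $Z$ under the Hausdorff metric, pass to a subsequence so that $\overline{A_n'} \to \bar A$ and $\overline{B_n'} \to \bar B$, with $\bar A, \bar B$ nonempty closed and $\dist(\bar A, \bar B) > 0$ by the uniform lower bound above. Choose open neighborhoods $U \supseteq \bar A$ and $V \supseteq \bar B$ with $\dist(U,V) > 0$; then $A_n' \subseteq U$ and $B_n' \subseteq V$ for large $n$, whence by monotonicity of weak capacity $\pcap(A_n', B_n') \leq \pcap(U,V)$. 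Since $p > Q_w(Z)$, the right-hand side is finite, a contradiction. The assertion about attainment follows by running the same argument at $p = Q_w(Z)$ under the hypothesis that finiteness already holds at the critical value. The main obstacle is executing the normalization cleanly: defining the scale vertex $v_{A,B}$ intrinsically and verifying that cocompactness of the $G$-action on $Y$, together with the quasi-isometry to $X$, really produces the claimed two-sided diameter bounds and preserves weak capacity up to a uniform constant.
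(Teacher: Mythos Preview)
Your proposal is correct and follows essentially the same strategy as the paper: use the cocompact action to build uniformly quasi-isometric self-maps of $X$, hence uniformly quasisymmetric self-maps of $Z$, normalize each pair $(A,B)$ to a uniformly separated pair, and finish by a compactness argument together with quasisymmetric invariance of $\pcap$. The differences are in execution rather than in substance.

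First, the paper's compactness step is a direct finite cover of $(Z\times Z)\setminus U_\epsilon$ by finitely many products $A_i\times B_i$ of positively separated open sets, so that every normalized pair $(A',B')$ lands inside some fixed $(A_i,B_i)$ and $\pcap(A,B)\lesssim \max_i \pcap(A_i,B_i)$. This is more constructive than your Hausdorff-limit contradiction and is precisely what yields Corollary~\ref{QS unif cor}; your argument would prove the theorem but not that corollary.

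Second, the normalization---your acknowledged ``main obstacle''---is carried out in Lemma~\ref{QI dist lemma}. The vertex used there is not the separation-scale vertex you describe but one at a \emph{middle} level between $H(A)$ and $V_X\setminus H(B(a,t\delta))$ on a geodesic ray to $a\in A$; this choice, together with Gromov-product estimates, is what forces \emph{both} $\diam(A')$ and $\diam(B')$ to be small while keeping $\dist(A',B')$ bounded below by a constant depending only on the quasi-isometry data (not on $t$). The lemma applies only when $\diam(A)$ is below a threshold $\delta_0$, so the paper handles large $\diam(A)$ separately by a doubling decomposition into small pieces. Your stated conclusion that the normalized diameters lie in a two-sided interval $[c_1(t),c_2(t)]$ is not quite what the normalization delivers, but since your Hausdorff-limit step only needs nonemptiness of the limits and a uniform lower bound on $\dist(A_n',B_n')$, the argument survives this discrepancy.
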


Here we use the notation $\Iso(Y)$ for the group of isometries of $Y$.  Recall that a group action on $Y$ is {\em cocompact} if there is a compact subset $K \subseteq Y$ such that $Y = \cup_{g \in G} gK$.  Equivalently, there is a compact subset $K \subseteq Y$ such that for every point $y \in Y$, we can find a group element $g$ such that $gy \in K$.

The proof of Theorem \ref{group action thm} shows that in the presence of a cocompact group action we can further reduce the conditions in Theorem \ref{qs unif thm} to only needing to check finitely many sets.

\begin{cor}\label{QS unif cor}
Let $(Z,d, \mu)$ be an Ahlfors $Q$-regular metric measure space homeomorphic to $\Stwo$ that is linearly locally connected.  Suppose there is a metric space $Y$ quasi-isometric to a hyperbolic filling $X$ of $Z$ and a group $G \subseteq \Iso(Y)$ acting cocompactly on $Y$.  Then, there exist finitely many pairs of sets $(A_i, B_i)$ with the property that if $\wcaptwo(A_i, B_i) < \infty$ for all $i$, then $Z$ is quasisymmetrically equivalent to $\Stwo$.
\end{cor}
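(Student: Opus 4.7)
The plan is to combine Theorem \ref{qs unif thm} with the reduction carried out in the proof of Theorem \ref{group action thm}. By Theorem \ref{qs unif thm}, it is enough to produce some $t_0 > 0$ for which $\varphi_2(t_0) < \infty$; equivalently, a uniform upper bound on $\wcaptwo(A,B)$ over all positively separated open sets $A,B \subseteq Z$ with $\Delta(A,B) \ge t_0$. So the entire goal of the argument is to convert ``finiteness of $\wcaptwo$ on a finite list of specific pairs'' into exactly such a uniform bound.

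The first step is to transfer the cocompact action from $Y$ to the hyperbolic filling $X$. The quasi-isometry between $Y$ and $X$ promotes the isometric $G$-action on $Y$ to a cocompact quasi-action of $G$ on $X$. Because $\wcap$ is defined intrinsically in terms of the combinatorial modulus of paths in the filling, this quasi-action preserves $\wcaptwo$ of pairs of sets up to a multiplicative constant depending only on the quasi-isometry constants: if a $g \in G$ corresponds, via the filling/$Z$ correspondence, to sending a pair of shadows $(A,B)$ in $Z$ to another such pair $(A',B')$, then $\wcaptwo(A,B)$ and $\wcaptwo(A',B')$ agree up to bounded multiplicative error.

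Next I would fix a convenient $t_0 > 0$. Positively separated pairs $(A,B)$ with $\Delta(A,B) \ge t_0$ are controlled, at the level of the filling, by configurations of vertices at bounded relative scale and depth. Cocompactness of the quasi-action ensures that there are only finitely many $G$-orbits of such configurations, and I would pick one representative pair $(A_i, B_i)$ from each orbit; these are the sets claimed by the corollary. Assuming $\wcaptwo(A_i, B_i) < \infty$ for each $i$, the invariance from the previous paragraph yields a uniform bound $\wcaptwo(A,B) \le C$ for every positively separated pair with $\Delta(A,B) \ge t_0$, where $C$ depends only on $\max_i \wcaptwo(A_i,B_i)$ and the quasi-isometry constants. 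Hence $\varphi_2(t_0) < \infty$, and Theorem \ref{qs unif thm} completes the argument.

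The main obstacle is making precise the passage from the isometric $G$-action on $Y$ to a usable invariance statement for $\wcaptwo$ on open subsets of $Z$: the filling $X$ is only quasi-isometric to $Y$, so $G$ acts on $X$ as a quasi-action rather than by honest isometries, and the vertex/shadow correspondence between $X$ and $Z$ is not canonical. Tracking how these distortions propagate through the path-modulus definition of weak capacity, and verifying that ``bounded relative scale'' configurations really do fall into finitely many orbits uniformly in $t_0$, is the delicate point; substantively, this is the same bookkeeping underlying Theorem \ref{group action thm}, which I would invoke rather than reprove.
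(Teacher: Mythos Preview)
Your proposal is correct and follows the paper's approach: run the proof of Theorem~\ref{group action thm} with $p=2$ to obtain $\varphi_2(t')<\infty$ from finiteness of $\wcaptwo$ on finitely many pairs, then apply Theorem~\ref{qs unif thm}. The paper's own proof is just a one-line remark to this effect.

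One point of clarification: your heuristic that the finite list $(A_i,B_i)$ arises as representatives of ``finitely many $G$-orbits of configurations'' is not how the paper actually produces them. In the proof of Theorem~\ref{group action thm}, the pairs $(A_i,B_i)$ come from a purely topological compactness argument on $Z\times Z$ minus a neighborhood of the diagonal, with no reference to $G$ at all. The group action enters only afterward, via Lemma~\ref{QI dist lemma}: the induced quasisymmetries on $Z$ are used to shrink an arbitrary pair $(A,B)$ with $\Delta(A,B)\geq t'$ so that its image sits inside some $A_i\times B_i$, and then monotonicity plus Theorem~\ref{QS inv qcap} give the uniform bound. Since you explicitly defer to Theorem~\ref{group action thm} rather than reproving it, this does not affect the validity of your argument, but your orbit picture would be hard to make literal.
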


Our next theorem concerns metric spaces which satisfy the Combinatorial Loewner Property developed in \cite{BdK}.  This property requires that the discrete modulus of two sets is eventually bounded both above and below by functions of the relative distance.  For precise definitions, we refer the reader to Section \ref{Section other thms}.

\begin{thm} \label{CLP theorem}
Let $(Z,d,\mu)$ be an Ahlfors regular metric space that satisfies the Combinatorial Loewner Property with exponent $Q_{CLP}(Z) \geq 1$.  Then, for all $1 \leq p < Q_{CLP}$ and positively separated open sets $A,B \subseteq Z$ we have $\pcap(A,B) = \infty$.
\end{thm}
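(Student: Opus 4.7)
My plan is to connect the weak $p$-capacity, defined via the hyperbolic filling $X$ of $Z$, to the combinatorial $p$-modulus at individual scales of $X$, and then exploit the CLP lower bound at the critical exponent together with a truncation argument to force divergence of the resulting series.

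First, I would set up the decomposition. Let $X$ be a hyperbolic filling of $Z$, with vertices stratified by level $G_k$, where $v \in G_k$ corresponds to a ball of radius $\sim 2^{-k}$ in $Z$. For any admissible function $\rho:V(X)\to[0,\infty)$ for $\pcap(A,B)$, the cost splits across levels as
\[
\sum_{v \in V(X)} \rho(v)^p \;=\; \sum_{k \geq 0} \sum_{v \in G_k} \rho(v)^p.
\]
The key claim I would establish is that, for each sufficiently fine $k$, the restriction $\rho|_{G_k}$ is, up to a multiplicative constant $C$ depending only on the parameters of the hyperbolic filling, admissible for the combinatorial $p$-modulus $\mathrm{Mod}_{p,k}(A,B)$ of curves in $Z$ connecting $A$ and $B$ at scale $2^{-k}$. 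This would follow by associating to each chain of scale-$k$ balls joining $A$ to $B$ a lifted path in $X$ between the lifts of $A$ and $B$, and controlling the contribution of the vertical legs of the lift.

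Next, I invoke the standard truncation: replacing $\rho$ by $\min(\rho,1)$ preserves admissibility (since any path visiting a vertex where $\rho=1$ already has sum at least $1$), so without loss of generality $\rho \leq 1$ pointwise. Since $p \leq Q_{CLP}$, this gives $\rho^p \geq \rho^{Q_{CLP}}$ pointwise. Combining with the key claim and the CLP lower bound,
\[
\sum_{v \in G_k} \rho(v)^p \;\geq\; \sum_{v \in G_k} \rho(v)^{Q_{CLP}} \;\geq\; C^{-Q_{CLP}}\, \mathrm{Mod}_{Q_{CLP},k}(A,B) \;\geq\; C^{-Q_{CLP}}\,\phi(\Delta(A,B)),
\]
where $\phi$ is the CLP lower-bound function. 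Since $A$ and $B$ are positively separated, $\Delta(A,B)$ is finite and $\phi(\Delta(A,B)) > 0$ is a strictly positive constant independent of $k$. Summing over the infinitely many $k$ above some threshold then forces $\sum_v \rho(v)^p = +\infty$, and taking the infimum over admissible $\rho$ gives $\pcap(A,B) = +\infty$, as desired.

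The main obstacle is the key claim: that an admissible function for $\pcap$ restricts to a quantitatively admissible function for combinatorial modulus at each sufficiently fine scale. The subtlety is that $\pcap$-admissibility only constrains sums along full paths in $X$, which may distribute mass across many levels, whereas combinatorial admissibility requires the level-$k$ slice alone to be at least $1$ along each chain. I expect this step to use a lifting procedure based on Gromov-hyperbolicity of $X$, showing that the vertical legs of the lifted paths are uniformly controlled (possibly after averaging over boundary endpoints of the lift) so that the level-$k$ contribution carries essentially all of the admissibility mass up to a universal factor.
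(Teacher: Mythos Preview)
There are two genuine gaps in your proposal.

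First, you have confused the strong $\ell^p$ norm with the weak $\ell^{p,\infty}$ norm. The quantity $\pcap(A,B)$ is defined as $\inf_\tau \wpnorm{\tau}^p$, not $\inf_\tau \sum_v \tau(v)^p$. Your conclusion ``$\sum_v \rho(v)^p = +\infty$, and taking the infimum over admissible $\rho$ gives $\pcap(A,B)=+\infty$'' is therefore a non sequitur: a function can have $\sum_v \rho(v)^p = \infty$ while $\wpnorm{\rho}<\infty$ (e.g.\ place mass $\simeq 2^{-k}$ on $\simeq 2^{kp}$ vertices at level $k$). This gap is repairable: your chain of inequalities actually yields $\sum_{v\in G_k}\rho(v)^{Q_{CLP}}\geq c>0$ for all large $k$, hence $\norm{\rho}_{Q_{CLP}}=\infty$, and since $\ell^{p,\infty}\subseteq\ell^{Q_{CLP}}$ for $p<Q_{CLP}$ this would force $\wpnorm{\rho}=\infty$. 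But you do not make this connection, and as written the argument does not close.

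Second, and more seriously, your ``key claim'' that $C\rho|_{G_k}$ is admissible for combinatorial modulus at each fine scale is the entire content of the theorem, and your sketch does not establish it. Controlling the vertical legs of a lift is exactly what the paper's binary-path-structure lemmas (the First Path Lemma and its refinements) accomplish, and these lemmas take $\wpnorm{\rho}$ as an input: the bound $S(a,\beta)$ on a good ascending path depends on $a=\wpnorm{\rho}$, and one needs $\rho\leq\beta$ on all sufficiently deep levels before the bound becomes useful. So the key claim cannot be proved for arbitrary admissible $\rho$; one must first assume $\wpnorm{\rho}<\infty$ and argue by contradiction. Your gesture at ``Gromov-hyperbolicity'' and ``averaging over boundary endpoints'' does not supply this mechanism. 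The paper runs precisely the contrapositive of what you propose: assume $\wpnorm{\tau}<\infty$, use $\ell^{p,\infty}\subseteq\ell^{Q}$ to force $\norm{\tau|_n}_Q\to 0$, so by CLP the slice $\tau|_n$ is \emph{not} admissible at some deep level, extract a horizontal chain with small $\tau$-sum, and then use the path lemmas to attach short vertical legs landing in $A$ and $B$ (this also requires replacing $A,B$ by continua $E\subseteq\tfrac12 A$, $F\subseteq\tfrac12 B$, since the CLP lower bound is stated for continua). The concatenated path then has $\tau$-length $<1$, contradicting admissibility.
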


This theorem is analogous to Remark $1$ at the end of Section $3$ in \cite{BdK} which states that by a result in \cite{HP}, given disjoint continua $A,B \subseteq Z$ and $p < Q_{CLP}$ we have $\lim_{k \to \infty} \Modp(A,B,G_k) = \infty$.  Here $\Modp(A,B,G_k)$ is the discrete modulus of paths connecting $A$ and $B$ in the discretization $G_k$ of $Z$.  Theorem \ref{CLP theorem} shows $Q_{CLP} \leq Q_w$.

Our last theorem is concerned with Ahlfors regular conformal dimension $\ARCdim(Z)$ of a metric space $Z$.  This is the infimum of the dimensions of metric spaces that are both Ahlfors regular and quasisymmetrically equivalent to $Z$.  Our theorem shows that if $Z$ attains it Ahlfors regular conformal dimension, then the quantities $Q_w'(Z)$, and $\ARCdim(Z)$ agree.

\begin{thm} \label{Wk Tan Thm}
Let $(Z,d, \mu)$ be an Ahlfors $Q$-regular metric measure space with $\ARCdim(Z) = Q$.  Let $p < Q$.  Then, there exists a sequence of positively separated open sets $A_k, B_k$ and a constant $C > 0$ such that $\Delta(A_k, B_k) < C$, but $\pcap(A_k, B_k) \to \infty$ as $k \to \infty$.
\end{thm}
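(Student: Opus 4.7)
The plan is to argue by weak tangents, which explains the label \ref{Wk Tan Thm}. The key ingredient is a classical theorem of Keith and Laakso for spaces attaining their Ahlfors regular conformal dimension: there exists a weak tangent $(W,\hat d,\hat\mu)$ of $Z$, together with two disjoint non-degenerate continua $E, F \subseteq W$, such that the family $\Gamma_{EF}$ of rectifiable curves joining $E$ to $F$ carries strictly positive $Q$-modulus in $W$.

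Given such $E,F$, the first step is to observe that for any $1 \leq p < Q$ the combinatorial $p$-modulus of $\Gamma_{EF}$ computed on the level-$k$ discretization of $W$ tends to infinity as $k \to \infty$. This is a standard consequence of Ahlfors $Q$-regularity: a curve family confined to a bounded region and made up of curves with diameters bounded below cannot carry positive $Q$-modulus without its combinatorial $p$-modulus diverging for $p < Q$, by a direct mass comparison argument on $\hat\mu$.

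The second step is to transfer these continua back to $Z$. By the definition of weak tangent, there exist points $z_k \in Z$ and scales $r_k \to 0$ such that the pointed rescalings $(Z, r_k^{-1} d, z_k)$ converge to $(W, \hat d, w_0)$ in pointed Gromov--Hausdorff sense. Approximating $E$ and $F$ in these rescalings produces positively separated open sets $A_k, B_k \subseteq Z$ whose rescaled images converge to $E$ and $F$. Since relative distance is scale-invariant and continuous under Gromov--Hausdorff convergence of compact pairs, the sequence $\Delta(A_k, B_k)$ is bounded above by a constant $C$ independent of $k$.

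The final step is to compare weak $p$-capacity in $Z$ to the combinatorial $p$-modulus in $W$. By definition, $\pcap(A_k, B_k)$ is computed using paths in the hyperbolic filling of $Z$ connecting shadows of $A_k$ and $B_k$. At sufficiently fine levels of the filling near $z_k$, the relevant path family is quantitatively comparable to the discrete curve family computing combinatorial $p$-modulus in $W$, up to multiplicative constants depending only on the filling parameters. Hence $\pcap(A_k, B_k)$ is bounded below by a constant multiple of the combinatorial $p$-modulus of $\Gamma_{EF}$ in $W$ at ever finer scales, which diverges by the first step. The main obstacle I expect is making this last comparison quantitatively precise: one must align the covers of $Z$ near $z_k$ at scale $r_k$ with the discretizations of $W$ near $w_0$ through the Gromov--Hausdorff approximation, and track the error terms carefully so that they do not obscure the divergence of the $p$-modulus.
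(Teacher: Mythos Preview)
Your architecture matches the paper's: invoke Keith--Laakso (the paper uses Carrasco Piaggio's formulation) to obtain a weak tangent $Z_\infty$ carrying a curve family of positive $Q$-modulus between two sets, pull back through the Gromov--Hausdorff approximations to get $A_k,B_k\subseteq Z$ with bounded $\Delta(A_k,B_k)$, and then show $\pcap(A_k,B_k)\to\infty$. The gap is in the last step, and it is more than an error-tracking issue. An admissible $\tau$ for $\pcap$ lives on the whole hyperbolic filling and filling paths traverse all levels, so $\tau|_n$ need not be admissible for the level-$n$ combinatorial modulus. What is actually required is a mechanism to take a \emph{horizontal} chain at some level with small $\tau$-sum and extend it at both ends to a genuine filling path landing in $A_k$ and $B_k$, with negligible added $\tau$-length. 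The paper does this with its binary path structures and the Main Path Lemma (Lemma~\ref{Last Path Lemma}); this machinery is the missing ingredient in your sketch, and without it the claimed ``quantitative comparability'' of the two path families does not hold.

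There is also an exponent/norm mismatch in your proposed route. You want divergence of the combinatorial $p$-modulus to force $\|\tau\|_{p,\infty}$ large, but the weak $\ell^{p,\infty}$ norm on $V_X$ does not control the strong $\ell^p$ norm on a single level (one only gets $\|\tau|_n\|_p^p\lesssim\|\tau\|_{p,\infty}^p\log N_n$, and $\log N_n$ grows with $n$). The paper sidesteps this with a different trick: since $p<Q$, any $\tau$ with $\|\tau\|_{p,\infty}^p\le a$ satisfies $\|\tau\|_Q^Q\le C(a)$, so at most boundedly many levels can have $\|2\tau|_n\|_Q^Q$ above the \emph{uniform} lower bound $C_{\mathrm{mod}}$ on the discrete $Q$-modulus in the tangent (obtained via Ha\"issinsky's Lemma~\ref{Hai lemma} from positivity of $\qmod$; no divergence is invoked). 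On any other level $2\tau|_n$ fails $Q$-admissibility, producing the horizontal chain; one then selects such a level where additionally $\tau<\beta$ on the next $\ell_0$ levels, so that the path-lemma extension applies and $\tau$ is shown not to be admissible for $\pcap(A_k,B_k)$.
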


This shows for $p < Q$ that $p \leq Q_w'$, so $Q_w' \geq Q = \ARCdim$.  The proof of this result relies on the existence of a family of curves with positive $Q$-modulus in a weak tangent of $(Z,d)$.  This was first proven in \cite{KL}, but we will use a version provided by \cite{CP}.  

We now outline the remainder of the paper.  In Section \ref{Section preliminaries}, we provide precise definitions to concepts discussed above.  In Section \ref{Section wcap basics}, we state previous results from \cite{L1} and prove properties of weak capacity.  In Section \ref{Section control functions}, we prove path lemmas as well as investigate properties of $\varphi_p$.  In Section \ref{Section qs unif}, we prove Theorem \ref{qs unif thm} as well as its corollaries.  In Section \ref{Section other thms}, we prove Theorems \ref{CLP theorem} and \ref{Wk Tan Thm}.

Given quantities $A,B$ we will sometimes use the symbol $\lesssim$ to mean there is a constant $C > 0$ depending on some ambient parameters such that $A \leq C B$.  We will likewise use $\gtrsim$ to mean there is a constant $C>0$ depending on some ambient parameters such that $CA \geq B$.  If $A \lesssim B$ and $B \lesssim A$, we may write $A \simeq B$.  We use $a \vee b = \max(a,b)$ and $a \wedge b = \min(a,b)$.


\begin{ack}
The author thanks Mario Bonk introducting the subject material and frequently discussing the research in this paper; his guidance has been extremely valuable.  The author thanks Pekka Pankka and Eero Saksman for their read-throughs of drafts of this paper and many helpful comments.  The author thanks both UCLA and the University of Helsinki for their support.
\end{ack}


\section{Preliminary definitions and concepts}\label{Section preliminaries}

\subsection{General concepts}

First we fix some notation.  Given a point $z$ in a metric space $(Z,d)$ and a real number $r \geq 0$, we set ${B(z,r) = \{z' \in Z : d(z,z') < r\}}$.  Given $A \subseteq Z$, we write $\overline{A}$ for the closure of $A$ in $Z$.  Thus, $\overline{B(z,r)}$ is the closure of $B(z,r)$ in $Z$, which is contained in (but not necessarily equal to) the closed ball $\overline{B}(z,r) = \{z' \in Z : d(z,z') \leq r\}$.  Given two sets $A,B \subseteq Z$, we write $\dist(A,B) = \inf \{d(a,b) : a \in A, b \in B\}$ for the {\em distance} between $A$ and $B$.  We set $\diam(A) = \sup\{d(a,a') : a, a' \in A\}$.  In what follows, we will often assume $\diam(Z) = 1$, which can be attained by rescaling $d$ as we will be working with compact metric spaces.  

A metric measure space $(Z,d,\mu)$ (so $\mu$ is a Borel regular measure) is called {\em Ahlfors $Q$-regular} if there exist constants $c, C>0$ such that for all $r \leq \diam(Z)$ we have $cr^Q \leq \mu(B(z,r)) \leq Cr^Q$.  In this case, it follows that the Hausdorff $Q$-measure $\mH^Q$ also has this property, so we may also refer to just a metric space $(Z,d)$ as being Ahlfors $Q$-regular, or just $Q$-regular.  We also may de-emphasize the $Q$ and refer to regular or Ahlfors regular metric spaces.

Given $C \geq 1$, we say a metric space $(Z,d)$ is {\em $C$-uniformly perfect} if for every $z \in Z$ and every $r < \diam(Z)$, the set $B(z,r) \setminus B(z, r/C)$ is nonempty.  This condition is similar to connectivity; it forbids isolated points and all connected spaces are uniformly perfect.  A metric space $(Z,d)$ is {\em doubling} if there exists a constant $N>0$ such that for every $z \in Z$ and $r > 0$, there are at most $N$ points $\{z_i\}$ such that $B(z, r) \subseteq \cup_i B(z_i, r/2)$.  It follows that there is a function $N \colon [1,\infty) \to \N$ such that for every $z \in Z$ and $r>0$, there are at most $N(\alpha)$ points $\{z_i\}$ such that $B(z,r) \subseteq \cup_i B(z_i, r/\alpha)$.  Both of these properties hold in Ahlfors regular metric spaces from straightforward measure computations.

\sloppy
The maps between these metric spaces that we are primarily interested in are quasisymmetric homeomorphisms.  Given an increasing function $\eta \colon (0,\infty) \to (0,\infty)$ with $\lim_{t \to 0} \eta(t) = 0$, a homeomorphism $f \colon (Z,d) \to (Z', d')$ is called an {\em $\eta$-quasisymmetry} ($\eta$-qs) if for all $x, y, z \in Z$ with $x \neq z$ we have
\begin{equation*}
\frac{d'(f(x),f(y))}{d'(f(x),f(z))} \leq \eta\biggl(\frac{d(x,y)}{d(x,z)}\biggr).
\end{equation*}
Quasisymmetric maps are global metric generalizations of quasiconformal maps in the sense that they control the distortion of the relative shapes of sets.  If there is a quasisymmetry $f \colon Z \to Z'$ between metric spaces $(Z,d)$ and $(Z',d')$, we say these metric spaces are {\em quasisymmetrically equivalent}.

Quasisymmetric maps correspond to quasi-isometric maps on the hyperbolic filling.  Given metric spaces $(X, d_X)$ and $(Y, d_Y)$ and real numbers $C,D > 0$, a function $f \colon X \to Y$ is called a {\em quasi-isometry} if for all $x, x' \in X$ we have 
\begin{equation*}
\frac{1}{C}d_X(x, x') - D \leq d_Y(f(x), f(x')) \leq Cd_X(x, x') + D
\end{equation*}
and for every point $y \in Y$, there is an $x \in X$ with $d_Y(f(x), y) \leq D$.

For Theorem \ref{qs unif thm}, we require the notion of linear local connectivity.

\begin{defn}
A metric space $(Z,d)$ is $\lambda$-LLC (linearly locally connected) for $\lambda \geq 1$ if the following two conditions hold: \\
\indent (i) Given $B(a,r) \subseteq Z$ and $x,y \in B(a,r)$, there exists a continuum $E \subseteq B(a, \lambda r)$ containing $x$ and $y$. \\
\indent (ii) Given $B(a,r) \subseteq Z$ and $x,y \in Z \setminus B(a,r)$, there exists a continuum $E \subseteq Z \setminus B(a, r/\lambda)$ containing $x$ and $y$. \\
If $(Z,d)$ is $\lambda$-LLC for some $\lambda \geq 1$, we say $(Z,d)$ is LLC.
\end{defn}

A {\em continuum} is a compact, connected set with more than one point.  Heuristically, the LLC condition forbids a space from having very sharp cusps.  These pose an obstruction for quasisymmetric deformation: if one metric space is LLC and is quasisymmetrically equivalent to another metric space, then that other metric space must also be LLC (for a possibly different $\lambda$).  

\fussy

\subsection{Hyperbolic fillings and paths}
To define a hyperbolic filling $X = (V_X, E_X)$ for our compact, connected, $Q$-regular metric space $(Z,d)$ we first fix a real number $s>0$ called the {\em scaling parameter}.  Then, for each $k \in \N$, we choose an $s^{-k}$-separated set $Z_k$ that is maximal relative to inclusion.  To each $z_i^k \in Z_k$, we define a vertex $v_i^k$ associated to the ball $B(z_i^k, 2s^{-k})$.  By abuse of notation, we will often consider $v \in Z$ as the center of the vertex $v$ and write the ball associated to $v$ as $B(v, 2s^{-k})$ or $B_v$.  We write $r(B_v) = r(v) = 2s^{-k}$ for the radius of $B_v$.  We call $k$ the {\em level} of the vertex $v$ and write this as $\ell(v)$, so $r(B_v) = 2s^{-\ell(v)}$.  As $\diam(Z) = 1$, for $k=0$ we define a unique vertex $O$ associated to the entire set $Z$.  For $k \geq 1$, we define the vertices on level $k$ as $V_X^k = \cup_i v_i^k$ and we set $V_X^0 = O$.  We set $V_X = \cup_k V_X^k$.  To define $E_X$, we connect distinct vertices $v,w \in V_X$ by an edge if and only if $|\ell(v) - \ell(w)| \leq 1$ and $B_v \cap B_w \neq \emptyset$.  It follows from the existence of $O$ that $X$ is a connected graph.  We equip $X$ with the graph metric (denoted by $|\cdot - \cdot|$) which identifies each edge isometrically with the interval $[0,1]$ in $\R$.  This means if $v,w$ are vertices in $V_X$ then $|v-w|$ is the smallest number of edges required to connect $v$ to $w$.

This construction has a number of nice properties (for proofs and definitions, see \cite[Section 3]{L1}).  The filling $X$ is geodesic and Gromov hyperbolic and the original space $Z$ can be recovered as the boundary at infinity $\partial_\infty X$ equipped with a visual metric.  As $Z$ is doubling, $X$ has bounded degree.  Thus, the hyperbolic filling functions much as the Cayley graph of a hyperbolic group.  While our construction relies on some choices, specifically the choice of $s$ and the choices of the sets $Z_k$, different choices result in quasi-isometrically equivalent graphs (in fact, the vertex sets are bilipschitz equivalent; see \cite{L2}).  Distance in $X$ will typically be denoted with the notation $|\cdot - \cdot|$, but sometimes we will use $\rho(\cdot, \cdot)$ as well, especially when dealing with the distance between subsets.

We now specify what we mean by paths in $X$.  A {\em vertex path} in $X$ is a (possibly two-sided infinite, one-sided infinite, or finite) sequence of vertices $\{v_k\}$ such that for all $k$ for which the vertices $v_k$ and $v_{k+1}$ exist, these vertices are connected by an edge.  Similarly, an {\em edge path} in $X$ is a sequence of edges $\{e_k\}$ such that for all $k$ for which the edges $e_k$ and $e_{k+1}$ exist, these edges share a common vertex.  There is a bijection between vertex paths and edge paths.  Given a vertex path $\{v_k\}$, we define a corresponding edge path $\{e_k\}$ by setting $e_k$ to be the edge connecting $v_k$ to $v_{k+1}$.  Given an edge path $\{e_k\}$, we define a corresponding vertex path $\{v_k\}$ by setting $v_k$ to be the vertex that $e_{k-1}$ and $e_k$ have in common.  Hence, we can refer to both simultaneously as a {\em path}.  A one-sided infinite vertex path $\{v_k\}_{k=1}^{\infty}$ (and hence a one-sided infinite vertex path) converges to a point $z \in Z$ if $v_k \to z$ and $r(B_{v_k}) \to 0$.  A two-sided infinite vertex path $\{v_k\}$ may have at most two limits in the above sense, one as $k \to \infty$ and one as $k \to -\infty$.

\subsection{Weak norms and weak capacity}
To define weak capacity, we will use the weak norm.  Given a countable set $X$ and a function $f \colon X \to [0,\infty)$, let $\norm{f}_{p,\infty}$ be the infimum of $C > 0$ such that for all $\lambda > 0$ we have
\begin{equation} \label{weak def ineq}
\#\{x \in X : f(x) > \lambda\} \leq \frac{C^p}{\lambda^p}.
\end{equation}
Set $\ell^{p,\infty}(X) = \{f : \wpnorm{f} < \infty\}$.  The quantity $\wpnorm{\cdot}$ does not define a norm, but it is comparable to one up to a multiplicative constant when $p > 1$ (see \cite[Section 2]{BnS}).  For what we are concerned with, a fixed multiplicative constant has no impact.  Hence. we freely refer to $\wpnorm{\cdot}$ as a norm and use the comparable inequality (\ref{weak def ineq}).

Now that we have a hyperbolic filling, notions of path limits, and the weak norm, we can define weak capacity.  This is a quantity defined between two sets $A, B \subseteq Z$.  Given sets $A,B \subset Z$, we say $A$ and $B$ have {\em positive separation}, or are {\em positively separated}, if $\dist(A,B) > 0$.  For our purposes we will only need to use open sets with positive separation, so we define weak capacity only in this case (this avoids technicalities such as having to deal with nontangential limits).

Given a hyperbolic filling $X$ of $(Z,d)$ and open, positively separated sets $A,B \subseteq Z$, a function $\tau \colon E_X \to [0,\infty)$ is called {\em admissible} for $A,B$ if for all edge paths $\gamma$ with limits in $A$ and $B$ we have $\sum_{e \in \gamma} \tau(e) \geq 1$, where $\sum_{e \in \gamma} \tau(e)$ is shorthand for writing the edges in $\gamma$ as a bi-infinite sequence $\{e_k\}$ and summing $\sum_{k=-\infty}^{\infty} \tau(e_k)$.  From this definition, it is clear that we can restrict the codomain of $\tau$ to $[0,1]$ by redefining $\tau$ as $\min(\tau, 1)$.

\begin{defn}
Given a positively separated open sets  $A,B \subseteq Z$, we define the weak $p$-capacity $\pcap(A,B)$ between $A$ and $B$ as 
\begin{equation*}
\pcap(A,B) = \inf \{ \norm {\tau}_{p,\infty}^p : \tau \text{ is admissible for $A$ and $B$} \}.
\end{equation*}
\end{defn}
The ambiguity that arises from different choices of hyperbolic fillings is just a multiplicative constant; see Theorem \ref{QS inv qcap} (\cite[Theorem 1.4]{L1}).  We can use vertex functions $\tau \colon V_X \to [0,1]$ for admissibility instead of edge functions also at the cost of a multiplicative constant.  This is proven in Lemma \ref{vertex edge comp}.  Further properties of $\pcap$ are also discussed in Section \ref{Section wcap basics}.

\subsection{Critical exponents}
This paper is primarily concerned with critical exponents that arise from $\pcap$ and their relation to {\em Ahlfors regular conformal dimension}.  This is denoted by $\ARCdim(Z),$ or $\ARCdim$ and defined as follows.
Let 
\begin{equation*}
\mathscr{G}_d = \{\theta: \theta \text{ is a metric on } Z \text{ with } (Z,\theta) \sim_{qs} (Z,d)\}.
\end{equation*}
Then $\ARCdim((Z,d)) = \inf  \dim_H (Z,\theta)$ where the infimum is taken over all $\theta \in \mathscr{G}_d$ such that $(Z,\theta)$ is Ahlfors regular.
Here $(Z,\theta) \sim_{qs} (Z,d)$ means that the identity map is a quasisymmetry.  Our first critical exponent is concerned with when $\pcap$ is finite.

\begin{defn}
\label{Qw def}
\begin{equation*}
Q_w((Z,d)) = \inf\{p:\pcap(A,B) < \infty \text{ for all open, positively separated } A \text{ and }B\}.
\end{equation*}
\end{defn}
We will write $Q_w$ instead when $(Z,d)$ is understood.

Our second critical exponent requires more control over $\pcap$.  For this, we use the notion of relative distance.  Given $A,B \subseteq Z$, the {\em relative distance} of $A$ and $B$ is 
\begin{equation*}
\Delta(A,B) = \frac{\dist(A,B)}{\min(\diam(A), \diam(B))}.
\end{equation*}

\begin{defn}
Given a hyperbolic filling $X$, we set 
\begin{equation*}
\varphi_p(t) = \sup \{\pcap(A,B) : \Delta(A,B) \geq t\}
\end{equation*}
where $A$ and $B$ are open sets.  We call this a {\em control function}.
\end{defn}

\begin{defn}
\label{Qw' def}
\begin{equation*}
Q_w'((Z,d)) = \inf \{p: \text{there exists } t_0 > 0 \text{ such that } \varphi_p(t_0) < \infty\}.
\end{equation*}
\end{defn}

The definition of $Q_w'$ is stronger than it appears at first glance.  Indeed, it follows that if $\varphi_p(t_0) < \infty$ for some $t_0$, then $\varphi_p(t) < \infty$ for all $t>0$ and that $\lim_{t \to 0} \varphi_p(t) = 0$; see Lemma \ref{varphi equivalence}.  

It is clear that $Q_w \leq Q_w'$.  It will be shown in Section \ref{Section wcap basics} that $Q_w' \leq \ARCdim$.

Theorem \ref{qs unif thm} is related to $Q_w'$.  Specifically, if $\varphi_2$ is finite, then $Q_w' = 2$ (note we cannot have $Q_w' < 2$ by Theorem \ref{Wk Tan Thm} below).  Likewise, if the infimum is attained at $2$ in the definition of $Q_w'$, then $\varphi_2$ is finite.


\section{Basic properties and critical exponents}\label{Section wcap basics}

\subsection{Basic properties}
We first prove that in the definition of $\pcap$, we can use vertex functions instead of edge functions to get a comparable quantity.  For positively separated open sets $A,B \subseteq Z$, call a vertex function $\tau \colon V_X \to [0,1]$ {\em admissible} for $A$ and $B$ (or for $\pcap(A,B)$) if $\sum_{v \in \gamma} \tau(v) \geq 1$ for every path $\gamma$ in $X$ with limits in $A$ and $B$.

\begin{lemma} \label{vertex edge comp}
Let $(Z, d, \mu)$ be an Ahlfors $Q$-regular metric measure space and let $p>1$.  Fix a hyperbolic filling $X = (V_X, E_X)$.  For positively separated open sets $A,B \subseteq Z$, define $\pcap'(A,B)$ by infimizing $\wpnorm{\tau}^p$ over admissible vertex functions $\tau$ for $A$ and $B$.  Then, there are constants $c, C>0$ depending only on $Q$ and the hyperbolic filling parameters with the following property: whenever $A,B \subseteq Z$ are positively separated open sets,
\begin{equation*}
c\pcap(A,B) \leq \pcap'(A,B) \leq C\pcap(A,B).
\end{equation*}
\end{lemma}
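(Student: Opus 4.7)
The plan is to prove both inequalities by converting back and forth between admissible edge functions and admissible vertex functions, exploiting the bounded vertex degree $D$ of the hyperbolic filling $X$ (which follows from Ahlfors $Q$-regularity via the doubling property) together with the bijective correspondence between edge paths and vertex paths that preserves having limits in $A$ and $B$.

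For $\pcap(A,B) \leq C\,\pcap'(A,B)$, I would start with an admissible vertex function $\tau \colon V_X \to [0,1]$ and define $\sigma \colon E_X \to [0,1]$ by $\sigma(e) = \min(\tau(v) + \tau(w), 1)$ where $e$ has endpoints $v,w$. Given an edge path $\{e_k\}$ with limits in $A,B$, its corresponding vertex path $\{v_k\}$ has limits in $A,B$, so $\sum_k \tau(v_k) \geq 1$. Since each $\tau(v_k)$ is bounded above by $\sigma(e_{k-1})$ (or $\sigma(e_k)$), we obtain $\sum_k \sigma(e_k) \geq \sum_k \tau(v_k) \geq 1$, so $\sigma$ is admissible. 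For the norm estimate, note that
\begin{equation*}
\{e \in E_X : \sigma(e) > \lambda\} \subseteq \{e : \tau(v) > \lambda/2 \text{ for some endpoint } v\},
\end{equation*}
and bounded degree gives $\#\{e : \sigma(e) > \lambda\} \leq 2D \cdot \#\{v : \tau(v) > \lambda/2\} \leq 2^{p+1}D\,\wpnorm{\tau}^p / \lambda^p$, so $\wpnorm{\sigma}^p \leq 2^{p+1}D\,\wpnorm{\tau}^p$.

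For $\pcap'(A,B) \leq C\,\pcap(A,B)$, I would take an admissible edge function $\tau$ and define a vertex function by $\sigma(v) = \min\bigl(\sum_{e \ni v} \tau(e),\, 1\bigr)$. For any vertex path $\{v_k\}$ with limits in $A,B$, the corresponding edge path $\{e_k\}$ (where $e_k$ joins $v_k$ to $v_{k+1}$) satisfies $\sigma(v_k) \geq \tau(e_k)$, so $\sum_k \sigma(v_k) \geq \sum_k \tau(e_k) \geq 1$. For the weak norm, $\sigma(v) > \lambda$ forces some edge $e \ni v$ to have $\tau(e) > \lambda/D$; each such edge accounts for at most $2$ vertices, so
\begin{equation*}
\#\{v : \sigma(v) > \lambda\} \leq 2\,\#\{e : \tau(e) > \lambda/D\} \leq \frac{2 D^p\,\wpnorm{\tau}^p}{\lambda^p},
\end{equation*}
giving $\wpnorm{\sigma}^p \leq 2 D^p\,\wpnorm{\tau}^p$.

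The lemma then follows by infimizing over admissible $\tau$ on each side with the constants $c,C$ depending only on $D$ and $p$, both of which ultimately depend only on $Q$ and the scaling parameter $s$. There is no serious obstacle here; the argument is essentially a bookkeeping comparison based on bounded degree and the obvious weak-norm estimate for sums of boundedly many functions. The only point requiring a little care is ensuring that the truncation to $[0,1]$ preserves both admissibility and the weak-norm bound, but this is immediate because truncation only decreases the weak norm and $\sum_k \sigma \geq 1$ is unaffected once the individual bounds are applied before truncation.
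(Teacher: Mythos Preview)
Your proposal is correct and follows essentially the same approach as the paper: in each direction you convert between edge and vertex functions by summing over incident edges or endpoints, verify admissibility via the edge/vertex path correspondence, and bound the weak norm using the bounded degree $D$ of $X$. The paper's constants are $2D^p$ and $D\,2^p$, matching yours up to the harmless truncation, and the paper likewise leaves the second direction as ``a similar construction'' after doing the first in detail.
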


\begin{proof}
Fix positively separated open sets $A, B \subseteq Z$.  

\sloppy
First, let $\tau$ be an admissible (edge) function for $\pcap(A,B)$.  For $v \in V_X$, set $\tau'(v) = \sum_{e \sim v} \tau(e)$, where $e \sim v$ means the edge $e$ is incident to $v$.  Let $\gamma$ be a path connecting $A$ and $B$ in $X$.  We can view $\gamma$ as an bi-infinite sequence of alternating vertices and edges $(\dots, v_{k-1}, e_{k-1}, v_k, e_k, v_{k+1}, \dots)$ where $e_j$ is incident to $v_j$ and $v_{j+1}$ for all $j$.  Then, for all $j$ we have $\tau'(v_j) \geq \tau(e_j)$.  It follows that ${\sum_{v \in \gamma} \tau'(v) \geq \sum_{e \in \gamma} \tau(e)}$, so $\tau'$ is an admissible vertex function for $\pcap'(A,B)$.  Now, $X$ has bounded degree (\cite[Lemma 3.5]{L1}), meaning there is a constant $D > 0$ such that each vertex $v$ is incident to at most $D$ edges.  Hence, if $\tau'(v) > \lambda$, then one of the edges incident to $v$ must have $\tau(e) > \lambda / D$.  Each edge is incident to exactly two vertices, so it follows that
\begin{equation*}
\#\{v : \tau'(v) > \lambda\} \leq 2\#\{e : \tau(e) > \lambda/D\} \leq 2 \wpnorm{\tau}^p D^p / \lambda^p
\end{equation*}
so $\wpnorm{\tau'}^p \leq 2D^p \wpnorm{\tau}^p$.  Hence, $\pcap'(A,B) \leq 2D^p \pcap(A,B)$.

\fussy
For the other direction, we use a similar construction.  Let $\tau'$ be an admissible (vertex) function for $\pcap'(A,B)$.  For $e \in E_X$, set $\tau(e) = \sum_{v \sim e} \tau'(v)$.  By an argument very similar to the above, it follows that $\tau$ is admissible for $\pcap(A,B)$ and that $\wpnorm{\tau'}^p \leq D2^p \wpnorm{\tau}^p$, where $D$ is a bound on the degree of $X$.
\end{proof}

For the rest of this paper, we will use either vertex functions or edge functions depending on convenience.  If a statement refers to an admissible function without specifying, it should be understood that either choice is applicable to that statement.

We now collect basic properties of $\pcap$.

\begin{lemma} \label{basic properties wcap}
Let $(Z, d, \mu)$ be an Ahlfors $Q$-regular metric measure space.  Fix a hyperbolic filling $X = (V_X, E_X)$ of $Z$. Let $A,B$ be positively separated open sets.  \\
\indent {\normalfont(i)} {\normalfont (Monotonicity)} $\pcap(A,B) \leq \pcap(A', B')$ whenever $A', B'$ are positively separated open sets such that $A \subseteq A'$ and $B \subseteq B'$.  \\
\indent {\normalfont(ii)} {\normalfont (Countable subadditivity)} Suppose $A_k$ are open sets and $A = \cup_k A_k$.  Then, $\pcap(A,B) \leq \sum_k \pcap(A_k, B)$. \\
\indent {\normalfont(iii)} {\normalfont (Positivity)} $\pcap(A,B) > 0$.\\
\indent {\normalfont(iv)} $\lim_{p \to \infty} \pcap(A,B) = 0$ .
\end{lemma}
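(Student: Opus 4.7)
My plan is to handle the four parts separately. Parts (i) and (ii) are routine consequences of the definition of admissibility; (iv) reduces to an explicit test function built from a finite vertex cut in the hyperbolic filling $X$; (iii) is the most delicate point.

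For (i), any path with limits in $A \subseteq A'$ and $B \subseteq B'$ is also a path with limits in $A'$ and $B'$, so every $\tau$ admissible for $(A', B')$ is admissible for $(A, B)$, and infimizing over the larger class yields $\pcap(A, B) \leq \pcap(A', B')$. For (ii), fix $\epsilon > 0$ and for each $k$ choose $\tau_k$ admissible for $(A_k, B)$ with $\wpnorm{\tau_k}^p \leq \pcap(A_k, B) + \epsilon 2^{-k}$. Setting $\tau = \sup_k \tau_k$, any path with limits in $A = \bigcup_k A_k$ and $B$ has its $A$-limit in some $A_{k^*}$, so $\sum_{e \in \gamma} \tau(e) \geq \sum_{e \in \gamma} \tau_{k^*}(e) \geq 1$; that is, $\tau$ is admissible for $(A, B)$. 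Since $\{\tau > \lambda\} = \bigcup_k \{\tau_k > \lambda\}$, subadditivity of cardinality gives $\wpnorm{\tau}^p \leq \sum_k \wpnorm{\tau_k}^p \leq \sum_k \pcap(A_k, B) + \epsilon$; letting $\epsilon \to 0$ completes the proof.

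For (iv), I would construct a finite vertex cut in $X$. If a bi-infinite vertex path $\gamma$ from $A$ to $B$ attains its lowest level at some vertex of level $k_M$, then telescoping the triangle inequality along each ascending tail places both limits inside a single ball of radius comparable to $s^{-k_M}$, so $\dist(A, B) \lesssim s^{-k_M}$. This forces $k_M \leq k_0$ for some $k_0$ depending only on $\dist(A, B)$ and the filling parameters; hence $V^* = \{v \in V_X : \ell(v) \leq k_0\}$ is a finite set that every path must visit. Every such path therefore uses at least two edges from the finite set $E^* = \{e \in E_X : e \sim v \text{ for some } v \in V^*\}$, so $\tau = \tfrac{1}{2}\mathbf{1}_{E^*}$ is admissible. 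Its weak-$L^p$ quasinorm satisfies $\wpnorm{\tau}^p \leq |E^*|/2^p$, which tends to $0$ as $p \to \infty$.

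The main obstacle is (iii). Weak-$L^p$ smallness does not prevent an admissible function from meeting $\sum_{e \in \gamma} \tau(e) \geq 1$ by accumulating many tiny contributions along an infinite path, so a one-path lower bound is unavailable. My plan is to pass to vertex functions via Lemma \ref{vertex edge comp} and exploit the finite vertex cut $V^*$ from (iv): given admissible $\tau \colon V_X \to [0,1]$, one extends finite paths in $X[V^*]$ between $V_A^* = V^* \cap \{v : B_v \cap A \neq \emptyset\}$ and $V_B^* = V^* \cap \{v : B_v \cap B \neq \emptyset\}$ to bi-infinite admissible paths by attaching ``ascending tails'' built from vertices whose balls contain a fixed witness point of $A$ (resp.\ $B$), and then applies admissibility to each. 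These inequalities, together with the finite-support comparison $\sum_{v \in V^*} \tau(v) \leq C_p |V^*|^{(p-1)/p} \wpnorm{\tau}$ valid on weak $\ell^p$, should force $\wpnorm{\tau}^p$ to be at least a positive constant depending only on $\dist(A, B), \diam(A), \diam(B)$ and the filling parameters. The delicate step will be handling the tail contributions: either by averaging over many disjoint tail choices to dilute their weight, or by a separate argument that bounds any tail contribution from below also in terms of $\wpnorm{\tau}$, so that one of the two sides of the admissibility inequality must be controlled by the weak norm.
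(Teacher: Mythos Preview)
Parts (i) and (ii) are correct and essentially identical to the paper's arguments.

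Part (iv) contains a genuine error: the finite vertex cut $V^* = \{v : \ell(v) \leq k_0\}$ does not exist, because paths connecting $A$ to $B$ need not visit low levels at all. Since $Z$ is connected and the hyperbolic filling has horizontal edges (between same-level vertices whose balls overlap), the level-$n$ subgraph of $X$ is connected for every $n$: the open sets $\bigcup_{v \in C} B_v$ over the graph components $C$ would otherwise disconnect $Z$. Given any $n$, fix $a \in A$, $b \in B$, pick vertices $v_0, w_0$ at level $n$ with $a \in B_{v_0}$ and $b \in B_{w_0}$, connect them by a horizontal path at level $n$, and append level-increasing tails $v_0, v_1, \ldots \to a$ and $w_0, w_1, \ldots \to b$ with $\ell(v_k) = \ell(w_k) = n+k$. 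The resulting bi-infinite path joins $A$ to $B$ and has minimum level $n$, with $n$ arbitrary. Your telescoping step fails for the same reason: from a minimum-level vertex the two tails are not monotone in level, and while each step contributes at most $\lesssim s^{-k_M}$ to the $Z$-distance, there can be arbitrarily many such steps. The paper's route is indirect: it first establishes $\qcap(A,B) < \infty$ at the Ahlfors exponent $Q$ via the explicit test function of Proposition~\ref{Qw2 leq ARCdim}, then fixes an admissible $\tau$ with finite weak-$Q$ norm, modifies it to satisfy $\tau \leq 1 - \epsilon$ everywhere while preserving admissibility, and uses the layer-cake identity $\|\tau\|_q^q = \int_0^{1-\epsilon} q\lambda^{q-1}\#\{\tau > \lambda\}\,d\lambda$ together with the weak-norm bound on the distribution function to show $\|\tau\|_{q,\infty}^q \leq \|\tau\|_q^q \to 0$ as $q \to \infty$.

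Your plan for (iii) is built on the same nonexistent cut $V^*$, so it cannot proceed as stated. The paper does not argue (iii) here but cites \cite[Proposition~4.8]{L1}; the method there is to use binary path structures (cf.\ Section~\ref{Section control functions} of the present paper) to exhibit, for any $\tau$ of small weak norm, an explicit path from $A$ to $B$ with $\tau$-length below $1$. This is precisely the ``handling the tail contributions'' you flag as the crux, and it genuinely requires the branching-tree construction rather than a finite-cut counting argument.
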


To estimate $\wcap_q(A,B)$ in (iv) for $q > p$, we will first bound the $\ell^q(V_X)$ norm of $\tau$ and then we will use the expression for the norm comparable to the quantity in inequality (\ref{weak def ineq}).  To be careful in this proof, we will call that quantity  $\norm{\tau}_{q,\infty}^*$ and reserve $\norm{\tau}_{q,\infty}$ for the actual norm.  We will return to mixing the two after proving (iv).

\begin{proof}[Proof of Lemma \ref{basic properties wcap}] Let $Z,X$ and $A,B$ be as in the statement. \\
\indent (i) This follows from the definition as any $\tau$ admissible for $\pcap(A', B')$ is admissible for $\pcap(A,B)$. \\
\indent (ii)  We may assume $\pcap(A_k, B) < \infty$ for all $k$.  Let $\epsilon > 0$.  Let $\tau_k$ be admissible for $\pcap(A_k, B)$ and satisfy $\norm{\tau_k}_{p,\infty}^p < \pcap(A_k, B) + 2^{-k} \epsilon$. Set $\tau = \max(\tau_k)$.  It follows that $\tau$ is admissible for $\pcap(A,B)$.  Now, for $\lambda > 0$ we have
\begin{equation*}
\# \{v : \tau(v) > \lambda\} \leq  \sum_k \# \{v : \tau_k(v) > \lambda\} \leq \sum_k \norm{\tau_k}_{p,\infty}^p / \lambda^p.
\end{equation*}
Hence,
\begin{equation*}
\pcap(A,B) \leq \norm{\tau}_{p,\infty}^p \leq \biggl( \sum_k \pcap(A_k,B) \biggr) + \epsilon.
\end{equation*} \\

(iii) This is proven as \cite[Proposition 4.8]{L1}. \\
\sloppypar
\indent (iv) We assume $\pcap(A,B) < \infty$ here for some $p < \infty$; this is proven for $p = Q$ in Lemma \ref{Qw2 leq ARCdim}.  Let $\tau$ be an admissible vertex function for $\pcap(A,B)$ with $\wpnorm{\tau} < \infty$.  We claim we may assume that there is an $\epsilon > 0$ such that $\tau(v) \leq 1 - \epsilon$ for all $v \in V_X$. To see this, first assume $\tau(v) \leq 1$ for all $v \in V_X$ (we can do this as noted in the introduction).  We see $c = {\sup \{\tau(v) : \tau(v) < 1\} < 1}$ as otherwise there would be infinitely many $v$ with $\tau(v) > 1/2$, contradicting ${\wpnorm{\tau} < \infty}$.  Now, ${T_1 = \#\{v : \tau(v) = 1\}}$ is finite, so there is a level $L > 0$ such that for all $v$ with $\ell(v) \geq L$ we have $\tau(v) \neq 1$.  Set $\epsilon = (1-c)/2$.  We define
\begin{equation*}
\tau'(v) = 
\begin{cases}
1-\epsilon, \text{ if } v \in T_1 \\
\tau(v) + \epsilon, \text{ if } \ell(v) = L \\
\tau(v), \text{ otherwise }
\end{cases}.
\end{equation*}
It follows that $\wpnorm{\tau'} < \infty$ and $\tau'(v) \leq 1 - \epsilon$.  For admissibility of $\tau'$, we see $\tau' \geq \tau$ on $V_X \setminus T_1$, so if $\gamma$ is a path that does not intersect $T_1$, then $\sum_{v \in \gamma} \tau'(v) \geq 1$ follows from the admissibility of $\tau$.  If $\gamma$ intersects $T_1$, then $\gamma$ also intersects $V_X^L$, so by definition ${\sum_{v \in \gamma} \tau'(v) \geq (1-\epsilon) + \epsilon = 1}$.

\fussy
By \cite[equation (3.5.5)]{HKST}, we have
\begin{equation*}
\norm{\tau}_q^q = \int_0^{\infty} q \lambda^{q-1} \#\{v : \tau(v) > \lambda\} d\lambda.
\end{equation*}
Using $\#\{v : \tau(v) > \lambda\} \leq (\wpnorm{\tau}^{*})^p / \lambda^p$ and the fact that $\tau \leq 1-\epsilon$, this is bounded by
\begin{equation*}
q(\wpnorm{\tau}^{*})^p \int_0^{1-\epsilon}\lambda^{q-p-1} d\lambda = \frac{q}{q-p}(\wpnorm{\tau}^{*})^p (1-\epsilon)^{q-p}.
\end{equation*}
It follows that $\norm{\tau}_q^q \to 0$ as $q \to \infty$.  

We now look at the comparable norm in \cite[Equation (6), Section 2]{BnS}.   This is
\begin{equation*}
\wpnorm{\tau} = \sup \{n^{-1+1/q} \sum_{v \in V'} |\tau(v)| : n \in \N, V' \subseteq V_X, \#V' = n\}.
\end{equation*}
If  $\tau \in \ell^q(V_X)$ and $|V'| = n$, then by H\"older's inequality we see that 
\begin{equation*}
\sum_{v \in V'} |\tau(v)| \leq \norm{\tau}_q n^{1/q'}
\end{equation*}
where $1/q + 1/q' = 1$.  Hence, $\norm{\tau}_{q,\infty}^q \leq \norm{\tau}_q^q \to 0$.  As $\wcap_q(A,B) \leq \norm{\tau}_{q,\infty}^q$, the result follows.
\end{proof}

\begin{remark}
In (iv) we can instead bound $(\norm{\cdot}_{q,\infty}^*)^q$ directly in terms of $(\norm{\cdot}_{p,\infty}^*)^p$, but we would still need to compute the comparability constant using the actual norm.  The approach taken also establishes inclusion $\ell^{p,\infty} \subseteq \ell^q$ with norm bounds (if $\tau \leq 1$).
\end{remark}

In \cite{L1}, it is shown that $\wcap$ generalizes modulus in a quasisymmetrically robust way.  Given positively separated open sets $A, B \subseteq Z$, let $\modp(A,B)$ denote the $p$-modulus of the path family connecting $A$ and $B$.  That is, 
\begin{equation*}
\modp(A,B) = \inf \biggl\{ \int_Z \rho^p \biggr\}
\end{equation*}
where the infimum is taken over all Borel functions $\rho : Z \to [0,\infty]$ such that for all rectifiable paths $\gamma$ connecting $A$ to $B$, the integral with respect to path length satisfies $\int_\gamma \rho \ ds \geq 1$.  We make this generalization precise here by recalling the results in the relevant case of positively separated open sets.  Note that the exponent $Q$ in the following is the Ahlfors regular exponent of the space in question.

\begin{thm}[{\cite[Theorem 1.2]{L1}}]
\label{qmod < qcap}
Let $Q>1$ and let $(Z,d,\mu)$ be a compact, connected, Ahlfors $Q$-regular metric space.  Then there exists a constant $C>0$ depending only on $Q$ and the hyperbolic filling parameters with the following property: whenever $A,B \subseteq Z$ are positively separated open sets,
\begin{equation*}
\qmod(A,B)  \leq C\qcap(A,B).
\end{equation*}
\end{thm}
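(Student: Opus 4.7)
The plan is to transfer an admissible vertex function $\tau$ for $\qcap(A,B)$ into a Borel function $\rho$ on $Z$ admissible for $\qmod(A,B)$, with $\int_Z \rho^Q \, d\mu$ controlled by $\wpnorm{\tau}^Q$ up to a multiplicative constant.

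First, given $\varepsilon > 0$, I would fix an admissible vertex function $\tau \colon V_X \to [0,1]$ with $\wpnorm{\tau}^Q < \qcap(A,B) + \varepsilon$, invoking Lemma \ref{vertex edge comp} to move freely between vertex and edge formulations. I would then aggregate $\tau$ across scales by setting
\[
\rho(z) \;=\; \sum_{v \in V_X : z \in B_v} \frac{\tau(v)}{r(v)},
\]
which is Borel by the bounded multiplicity of the covering at each level.

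Second, for admissibility, let $\gamma$ be a rectifiable curve in $Z$ from $A$ to $B$. I would shadow $\gamma$ by a vertex path in $X$ whose limits lie in $A$ and $B$: at each level $k$, select an ordered minimal sequence of vertices in $V_X^k$ whose associated balls cover $\gamma$, and link this sequence to the endpoints by descending through vertex paths at increasingly fine levels inside $A$ and $B$ (which is possible since $A$ and $B$ are open). When $\gamma$ traverses $B_v$ for a shadowing vertex $v$, either $v$ is terminal or the traversed arc has length comparable to $r(v)$, so the contribution to $\int_\gamma \rho \, ds$ is at least a fixed multiple of $\tau(v)$. Summing along the shadowing path and invoking admissibility of $\tau$ gives $\int_\gamma \rho \, ds \gtrsim 1$; rescaling $\rho$ by a constant depending only on the filling parameters makes it admissible for $\qmod(A,B)$.

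Third, the main difficulty is the norm estimate $\int_Z \rho^Q \, d\mu \lesssim \wpnorm{\tau}^Q$. A naive level-by-level bound via Minkowski applied to $\rho = \sum_k \rho_k$ yields only $\int \rho^Q \lesssim \sum_v \tau(v)^Q$, which is the $\ell^Q$-norm and may well be infinite. To get around this, I would decompose the vertices dyadically by the size of $\tau$, letting $F_n = \{v : 2^{-n-1} < \tau(v) \leq 2^{-n}\}$, and exploit the weak-type bound $\#F_n \leq 2^{(n+1)Q}\wpnorm{\tau}^Q$. Combining Ahlfors $Q$-regularity $\mu(B_v) \simeq r(v)^Q$ with the bounded overlap of $\{B_v\}_{v \in V_X^k}$, a layer-cake estimate for $\rho^Q$ reorganizes the sum so that the $n$-th slice contributes a geometric term proportional to $\wpnorm{\tau}^Q$, absorbing the logarithmic divergence that would otherwise appear. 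The hard part is precisely this passage from weak $\ell^Q$ on $V_X$ to $L^Q$ on $Z$, which is possible only because the regularity exponent matches the weak-norm exponent — a matching built into the definition of $\qcap$ and essential for the constant $C$ to exist.
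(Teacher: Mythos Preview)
The paper does not prove this theorem here; it is quoted from \cite{L1} without argument. So there is no in-paper proof to compare against directly. That said, your sketch has a genuine gap in the norm estimate.

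The map $\tau \mapsto \rho = \sum_{v} \tfrac{\tau(v)}{r(v)} \chi_{B_v}$ is \emph{not} bounded from $\ell^{Q,\infty}(V_X)$ to $L^Q(Z)$, and no dyadic layer-cake reorganization repairs this. Fix $z_0 \in Z$ and for each level $k \geq 1$ take the vertex $v_k \in V_X^k$ with $d(z_0, v_k) < s^{-k}$ (so $B(z_0, s^{-k}) \subset B_{v_k}$); set $\tau(v_k) = k^{-1/Q}$ and $\tau = 0$ elsewhere. Then $\#\{v : \tau(v) > \lambda\} \leq \lambda^{-Q}$, so $\norm{\tau}_{Q,\infty}^Q \leq 1$. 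But for $z$ with $s^{-m-1} \leq d(z,z_0) < s^{-m}$ one has $z \in B_{v_k}$ for all $k \leq m$, whence $\rho(z) \gtrsim m^{-1/Q} s^{m}$, and therefore
\[
\int_Z \rho^Q \, d\mu \;\gtrsim\; \sum_{m \geq 1} m^{-1} s^{mQ} \cdot s^{-mQ} \;=\; \sum_{m \geq 1} \frac{1}{m} \;=\; \infty.
\]
This is exactly the borderline logarithmic divergence you claim to absorb; it is the standard failure of $\ell^{Q,\infty} \hookrightarrow \ell^Q$ transported to $Z$, and the matching of the regularity and weak-norm exponents does not help --- it is what makes the failure exactly critical rather than summable.

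The argument in \cite{L1} avoids summing over all levels. It uses the pigeonhole from \cite{BnS} (invoked in this paper inside the proof of Theorem~\ref{qs unif thm}): in any dyadic block of levels there is some $n$ with $\norm{\tau|_{V_X^n}}_Q^Q \lesssim \norm{\tau}_{Q,\infty}^Q$. One builds $\rho$ from that \emph{single} level, which gives the $L^Q$ bound immediately by Ahlfors regularity and bounded overlap. Admissibility of this single-level $\rho$ is then recovered by attaching, to the endpoints of the level-$n$ chain covering $\gamma$, cheap descending paths into $A$ and $B$ furnished by the binary path structures (the machinery of Lemmas~\ref{First Path Lemma} and~\ref{Last Path Lemma}); admissibility of $\tau$ for the concatenated $X$-path forces the level-$n$ portion alone to carry $\tau$-mass bounded below by a fixed constant. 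Your admissibility step is essentially correct for your $\rho$, but the $\rho$ you build cannot have finite $L^Q$ norm in general.
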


This says that when all exponents match (i.e. $Q$-regular, $\qmod$, and $\qcap$), then $\qcap$ serves as an upper bound for $\qmod$.  For comparability, we use a condition that guarantees $\qmod$ does not vanish (and so our metric space must have many rectifiable curves).  A metric space is called a {\em $Q$-Loewner space} if the function 
\begin{equation*}
\phi_L(t) = \inf \{\qmod(A,B) : \Delta(A,B) \leq t\}
\end{equation*}
has the property that $\phi_L(t) > 0$ for all $t > 0$, where the infimum is taken over pairs of disjoint continua satisfying the given relative distance condition (c.f. \cite{He}).  Loewner spaces were introduced by Heinonen and Koskela in \cite{HK}.

\begin{thm}[{\cite[Theorem 1.3]{L1}}]
\label{qcap < qmod}
Let $Q>1$ and let $(Z,d,\mu)$ be a compact, connected, Ahlfors $Q$-regular metric space which is also a $Q$-Loewner space.  Then there exist constants $c,C>0$ depending only on $Q$ and the hyperbolic filling parameters with the following property: whenever $A,B \subseteq Z$ are positively separated open sets,
\begin{equation*}
c \qmod(A,B) \leq \qcap(A,B) \leq C \qmod(A,B).
\end{equation*}
\end{thm}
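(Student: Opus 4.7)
The lower bound $c\,\qmod(A,B) \leq \qcap(A,B)$ is just Theorem \ref{qmod < qcap} and needs no Loewner assumption. The content of the statement is the upper bound $\qcap(A,B) \leq C\,\qmod(A,B)$, and this is where the Loewner hypothesis enters. My plan is to convert an admissible density $\rho$ for $\qmod(A,B)$ into an admissible vertex function $\tau$ on the hyperbolic filling $X$ with $\wpnorm{\tau}^Q \lesssim \int_Z \rho^Q \, d\mu$; passing to the infimum in $\rho$ then yields the theorem via Lemma \ref{vertex edge comp}.

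For $\tau$ I would work through the $\rho$-distance to $A$. Set $u(x) = \min\bigl(1, d_\rho(x,A)\bigr)$, where $d_\rho(x,A) = \inf_\gamma \int_\gamma \rho\, ds$ is the $\rho$-length distance from $A$ taken over rectifiable curves. After normalising $\rho$ by a fixed constant we may assume $u \equiv 0$ on $A$ and $u \equiv 1$ on $B$, and $\rho$ serves as an upper gradient of $u$. Define $\tau(v) = \operatorname{osc}(u, B_v)$. For any path $(v_k)$ in $X$ with limits in $A$ and $B$, consecutive balls $B_{v_k},B_{v_{k+1}}$ intersect, so the oscillations telescope:
\[
\sum_k \tau(v_k) \geq \operatorname{osc}\Bigl(u, \bigcup_k B_{v_k}\Bigr) \geq 1,
\]
since the limit condition places points of $\{u=0\}$ and $\{u=1\}$ in the closure of the union. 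So $\tau$ is admissible.

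For the norm bound I would combine the Loewner assumption with the fact that Ahlfors $Q$-regular $Q$-Loewner spaces support a $(1,Q)$-Poincar\'e inequality to obtain
\[
\tau(v) = \operatorname{osc}(u, B_v) \lesssim r(v) \biggl(\dashint_{L B_v} \rho^Q \, d\mu\biggr)^{1/Q}
\]
for a suitable enlargement factor $L \geq 1$. Thus $\tau(v) > \lambda$ forces the average of $\rho^Q$ over $L B_v$ to exceed $c\lambda^Q / r(v)^Q$. At each fixed level $k$, Ahlfors regularity together with the bounded overlap of $\{L B_v : v \in V_X^k\}$ and a weak-type $L^Q$ estimate for the Hardy--Littlewood maximal function yield a per-level count $\#\{v \in V_X^k : \tau(v) > \lambda\} \lesssim \lambda^{-Q} \int_Z \rho^Q \, d\mu$. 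Summation across levels must then be carried out carefully, e.g.\ via a Calder\'on--Zygmund truncation of $\rho$ at a scale depending on $\lambda$, so that only a bounded geometric range of levels contributes nontrivially and a single global bound $\wpnorm{\tau}^Q \lesssim \int_Z \rho^Q \, d\mu$ survives.

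The main obstacle is the level summation: each level alone is handled by elementary covering arguments, but the filling has infinitely many levels and a naive sum diverges, so one must genuinely exploit the weak nature of $\wpnorm{\cdot}$ and organise contributions via the distribution function of $\rho^Q$. The Loewner/Poincar\'e hypothesis also does substantial hidden work in providing the pointwise oscillation-versus-average comparison that links $\tau$ to $\rho$; without it, a discrete path in $X$ need not correspond in any controllable way to a rectifiable curve in $Z$, and the transfer from continuous to discrete admissibility would fail.
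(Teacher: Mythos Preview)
The paper does not actually prove this theorem; it is quoted from \cite{L1} as a background result, so there is no in-paper argument to compare against. I therefore evaluate your sketch on its own terms.

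Your admissibility step is sound: since $u\equiv 0$ on $A$ and $u\equiv 1$ on $B$ and consecutive balls along a path overlap, the oscillations telescope and $\sum_k\tau(v_k)\ge 1$.

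The fatal gap is the oscillation estimate
\[
\operatorname{osc}(u,B_v)\ \lesssim\ r(v)\Bigl(\dashint_{LB_v}\rho^Q\,d\mu\Bigr)^{1/Q}.
\]
A $(1,Q)$-Poincar\'e inequality in a $Q$-regular space controls the \emph{mean} oscillation of $u$, not the pointwise oscillation; at the critical exponent one lands in BMO, not in $L^\infty$. Already in $\R^n$ (which is $n$-regular and $n$-Loewner) the function $u(x)=\log\log(1/|x|)$ has $|\nabla u|\in L^n_{\mathrm{loc}}$ yet unbounded oscillation on every ball containing the origin, so the displayed inequality is simply false. Since your entire mechanism for bounding $\#\{v:\tau(v)>\lambda\}$ rests on this estimate, the argument does not go through. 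Keith--Zhong self-improvement to a $(1,p)$-Poincar\'e with $p<Q$ does not rescue it either, because a Morrey-type embedding into $L^\infty$ would require $p>Q$.

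The route taken in \cite{L1} avoids oscillation altogether: one sets $\tau(v)=r(v)\bigl(\dashint_{CB_v}\rho^Q\,d\mu\bigr)^{1/Q}$, so that $\tau(v)^Q\simeq\int_{CB_v}\rho^Q$ by $Q$-regularity, and the weak-$\ell^Q$ bound follows from a covering/maximal-function argument---this is exactly the step that forces the weak norm rather than the strong one, and it handles all levels at once rather than summing level by level. Admissibility is then proved by using the Loewner property to thread a genuine rectifiable curve in $Z$ through the chain of balls $B_{v_k}$, whose $\rho$-length is controlled by $\sum_k\tau(v_k)$. In other words, the Loewner hypothesis is used to \emph{produce curves}, not to bound the oscillation of a potential.
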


In the above statements it was important to look at the weak $Q$-capacity.  When dealing with quasisymmetric maps we may use other exponents.

\begin{thm}[{\cite[Theorem 1.4]{L1}}]
\label{QS inv qcap}
Let $Z$ and $W$ be compact, connected, Ahlfors regular metric spaces and let $p > 1$.  If $f \colon Z \to W$ is an $\eta$-quasisymmetric homeomorphism, then there exist constants ${c,C>0}$ depending only on $\eta$ and the hyperbolic filling parameters with the following property: whenever $A,B \subseteq Z$ are positively separated open sets,
\begin{equation*}
c\pcap(f(A),f(B)) \leq \pcap(A,B) \leq C\pcap(f(A), f(B)).
\end{equation*}
\end{thm}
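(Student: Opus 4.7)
The plan is to exploit the fact that a quasisymmetric homeomorphism between Ahlfors regular metric spaces induces a quasi-isometry between their hyperbolic fillings, and then transfer admissible functions through this induced map. By symmetry it suffices to prove one direction, say $\pcap(A,B) \leq C \pcap(f(A), f(B))$; the reverse follows by applying the same argument to $f^{-1}$, which is $\eta'$-quasisymmetric with $\eta'$ depending only on $\eta$ and the doubling constants (since both $Z$ and $W$ are doubling).

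First I would construct a map $F \colon V_{X_Z} \to V_{X_W}$ between the vertex sets of fixed hyperbolic fillings $X_Z$ of $Z$ and $X_W$ of $W$. For each $v \in V_{X_Z}$ with associated ball $B_v = B(z_v, r_v)$, the quasisymmetry control $\eta$ forces the image $f(B_v)$ to lie between two concentric balls around $f(z_v)$ whose radii are comparable (with constants depending on $\eta$). I would choose $F(v) \in V_{X_W}$ to be a vertex whose level is the unique $k$ with $s^{-k}$ comparable to $\diam(f(B_v))$ and whose associated ball contains $f(z_v)$. The maximality of the separated set used in the construction guarantees such a vertex exists, and the doubling property of $W$ together with bounded shape distortion makes $F$ boundedly many-to-one with multiplicity $D_1$ depending only on $\eta$ and the filling parameters.

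Next I would verify that $F$ nearly respects the edge structure. If $v \sim w$ in $X_Z$, then $B_v, B_w$ are of comparable radius and meet, so applying $\eta$ shows $f(B_v), f(B_w)$ are of comparable diameter and meet; this forces $F(v)$ and $F(w)$ to have levels within a bounded range and associated balls meeting after a bounded enlargement, giving $|F(v) - F(w)|_{X_W} \leq D_2$ for some $D_2$ depending only on $\eta$ and the filling parameters. In particular, given any path $\gamma = (v_k)$ in $X_Z$, the image sequence $(F(v_k))$ can be completed to an honest path $\tilde\gamma$ in $X_W$ by inserting at most $D_2$ vertices between consecutive images. Moreover, if $\gamma$ has a limit in $Z$, then $r(B_{v_k}) \to 0$ implies $\diam(f(B_{v_k})) \to 0$ by uniform continuity of $f$, so $\tilde\gamma$ has the corresponding limit in $W$ contained in $f(A) \cup f(B)$ as required.

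With $F$ in hand I would transfer admissibility. Given $\tau'$ admissible for $\pcap(f(A), f(B))$ as a vertex function (using Lemma \ref{vertex edge comp}), define $\tau(v) = \min\bigl(1, \sum_{u} \tau'(u)\bigr)$ where the sum is over $F(v)$ together with the at most $D_2$ interpolating vertices between it and the $F$-images of neighbors of $v$. The interpolation property and admissibility of $\tau'$ force $\sum_{v \in \gamma} \tau(v) \geq 1$ for every admissible path $\gamma$. For the weak-$p$ norm estimate, each $\tau(v) > \lambda$ implies at least one of the boundedly many summands exceeds $\lambda/(D_2+1)$; combined with the multiplicity bound $D_1$ on $F$, this gives
\begin{equation*}
\#\{v \in V_{X_Z} : \tau(v) > \lambda\} \leq D_1(D_2+1)\,\#\{u \in V_{X_W} : \tau'(u) > \lambda/(D_2+1)\} \leq C'\,\wpnorm{\tau'}^p / \lambda^p,
\end{equation*}
whence $\wpnorm{\tau}^p \leq C\wpnorm{\tau'}^p$ and the capacity inequality follows by infimizing over $\tau'$. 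The main obstacle is the bookkeeping in this last step: the weak $\ell^p$ norm is delicate under pointwise combinations, so one must avoid any naive duplication that would inflate level sets beyond the multiplicity $D_1$; bundling the interpolating values into a single summand and exploiting the pigeonhole observation on level sets is what keeps the estimate clean.
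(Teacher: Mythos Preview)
This theorem is not proved in the present paper; it is quoted from \cite[Theorem 1.4]{L1}. However, the paper does sketch the relevant construction in the proof of Corollary~\ref{local QS}: the induced vertex map sends $v$ to the highest-level $w$ in the target filling with $f(B_v) \subseteq B_w$, and the pulled-back admissible function is defined by $\tau(v) = \sum_{|F(v) - y| \leq D} \tau'(y)$, with the norm comparison deferred to \cite{L1}. Your approach is essentially the same: build a quasi-isometry $F$ between fillings from the quasisymmetry, push paths forward by filling in bounded-length gaps, and pull admissible vertex functions back by summing over a bounded neighborhood of the image.

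One small difference worth noting: the paper (and presumably \cite{L1}) sums $\tau'$ over \emph{all} vertices in the ball $\{y : |F(v) - y| \leq D\}$, whereas you sum only over $F(v)$ and the specific interpolating vertices attached to the edges out of $v$. Your version works, but your counting in the weak-norm estimate is slightly off: since $v$ has up to $\deg(X_Z)$ neighbors, the number of summands in $\tau(v)$ is bounded by roughly $\deg(X_Z)\cdot D_2$, not $D_2+1$. This is harmless because the degree is bounded (doubling), but you should say so. The ball formulation avoids this bookkeeping entirely and makes the multiplicity argument cleaner: the number of $v$ with a given $y$ in the $D$-ball of $F(v)$ is bounded directly by the quasi-isometry constants and bounded degree.
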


\subsection{Critical exponents}

Our interest lies in the critical exponents $Q_w$ and $Q_w'$ defined in terms of $\pcap$.  We now work towards showing $Q_w' \leq \ARCdim$.  The proof of this will also complete the proof of (iv) in Lemma \ref{basic properties wcap}.  First we need a lemma that relates relative distances of images of sets under a quasisymmetric homeomorphism.  The statement and proof are similar to \cite[Lemma 3.2]{BnK}.

\begin{lemma}
\label{qs rel dist lemma}
Let $(Z,d)$ and $(Z',d')$ be compact metric spaces.  Let $f \colon Z \to Z'$ be an $\eta$-quasisymmetric homeomorphism.  Then, there is an increasing homeomorphism $\Phi \colon (0,\infty) \to (0,\infty)$ depending only on $\eta$ with $\lim_{t \to \infty} \Phi(t) = \infty$ such that for all positively separated open sets $A,B \subseteq Z$ we have $\Delta(f(A),f(B)) \geq \Phi(\Delta(A,B))$.
\end{lemma}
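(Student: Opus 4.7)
The plan is to deduce $\Delta(f(A),f(B))\geq \Phi(\Delta(A,B))$ from two applications of the quasisymmetry inequality, one based at points of $A$ and one based at points of $B$, combined with a triangle-inequality ``chain trick'' that lets each basepoint access approximately half of the relevant diameter. First I would replace $\eta$, without loss of generality, by a strictly increasing continuous surjection $(0,\infty)\to(0,\infty)$ that majorizes it; this standard reduction is needed only so the resulting $\Phi$ is a homeomorphism.

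Fix positively separated open $A,B$, set $t=\Delta(A,B)$ and $\epsilon>0$, and choose $a_1,a_2\in A$ with $d'(f(a_1),f(a_2))\geq (1-\epsilon)\diam f(A)$. For any $a\in A$ the triangle inequality forces $\max_i d'(f(a),f(a_i))\geq (1-\epsilon)\diam f(A)/2$. For any $b\in B$, applying the $\eta$-qs inequality at basepoint $a$ to the pair $(a_i,b)$ in its reciprocal form yields
\begin{equation*}
d'(f(a),f(b))\geq \frac{d'(f(a),f(a_i))}{\eta\bigl(d(a,a_i)/d(a,b)\bigr)} \geq \frac{(1-\epsilon)\diam f(A)/2}{\eta\bigl(\diam A/\dist(A,B)\bigr)},
\end{equation*}
using $d(a,a_i)\leq \diam A$, $d(a,b)\geq \dist(A,B)$, and monotonicity of $\eta$. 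Taking the infimum over $a\in A$, $b\in B$ and then letting $\epsilon\to 0$ gives $\dist(f(A),f(B))\geq \diam f(A)/\bigl(2\eta(\diam A/\dist(A,B))\bigr)$; the symmetric argument run with basepoints in $B$ gives the analogous inequality with $A,f(A)$ replaced by $B,f(B)$.

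Dividing both inequalities by $\min(\diam f(A),\diam f(B))$ turns each into a lower bound for $\Delta(f(A),f(B))$; by monotonicity of $\eta$, the maximum of these bounds is
\begin{equation*}
\Delta(f(A),f(B))\geq \frac{1}{2\eta\bigl(\min(\diam A,\diam B)/\dist(A,B)\bigr)} = \frac{1}{2\eta(1/t)}.
\end{equation*}
Setting $\Phi(t) = 1/(2\eta(1/t))$ then gives an increasing homeomorphism $(0,\infty)\to(0,\infty)$ with $\Phi(t)\to \infty$ as $t\to\infty$, since $\eta(s)\to 0$ as $s\to 0$.

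The main obstacle is getting $\min(\diam A,\diam B)$ into the denominator of $\Phi$: a single application of the chain trick produces a bound involving only $\diam A/\dist(A,B)$, which could be much larger than $1/\Delta(A,B)$ when $\diam A>\diam B$, and quasisymmetry does not control which of the two sets has the smaller image diameter. Running the argument symmetrically in $A$ and $B$ and then combining, rather than attempting to match the smaller of $\diam f(A),\diam f(B)$ to the smaller of $\diam A,\diam B$ through $f$, sidesteps both difficulties at the cost of only a factor of $2$.
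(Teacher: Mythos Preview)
Your proof is correct and follows essentially the same route as the paper's: both compare a diameter-witnessing pair in the image to a distance-witnessing pair via the quasisymmetry inequality and arrive at $\Phi(t)=c/\eta(1/t)$ (you get $c=1/2$, the paper gets $c=1/3$). The only notable differences are organizational---the paper handles $A'$ and $B'$ simultaneously by picking auxiliary points $c_n'\in A'$, $d_n'\in B'$ at distance $\geq \diam/3$ from the near-realizing pair $a_n',b_n'$ and taking a single $\min$, whereas you run two symmetric arguments and combine---and your explicit replacement of $\eta$ by a homeomorphic majorant, which the paper leaves implicit but is indeed needed for $\Phi$ to be a homeomorphism.
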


\begin{proof}
Throughout the proof we use $'$ to denote the images of objects under $f$.  Let $a_n'$ and $b_n'$ be sequences of points in $A'$ and $B'$ such that $\dist(A',B') = \lim_{n \to \infty} d'(a_n',b_n')$.  Let $c_n'$ be a point in $A'$ such that $\diam(A')/3 \leq d'(a_n', c_n')$ and $d_n'$ a point in $B'$ such that $\diam(B')/3 \leq d'(b_n', d_n')$.  Then, as $\eta$ is increasing,
\begin{equation*}
\begin{split}
\eta (\Delta(A,B)^{-1}) &\geq \eta\left(\frac{\min(d(a_n, c_n) , d(b_n, d_n))}{d(a_n, b_n)}\right) \\
&\geq \frac{\min(d'(a_n', c_n'), d'(b_n', d_n'))}{d'(a_n', b_n')} \\
&\geq \frac{\min(\diam(A'), \diam(B'))}{3 d'(a_n',b_n')}.
\end{split}
\end{equation*}
Letting $n \to \infty$ we see $3\eta(\Delta(A,B)^{-1}) \geq \Delta(A',B')^{-1}$.  It follows from this that ${\Delta(A',B') \geq 1 / (3\eta(\Delta(A,B)^{-1}))}$.  Hence we may set $\Phi(t) = 1/(3\eta(t^{-1}))$.
\end{proof}

\begin{remark}\label{rel dist comp}
We may apply the above result to $f^{-1}$ as well to conclude that there exists an increasing function $\Phi' \colon (0,\infty) \to (0,\infty)$ with $\lim_{t \to \infty} \Phi'(t) = \infty$ such that for all positively separated open sets $A,B \subseteq Z$ we have $\Delta(A,B) \geq \Phi'(\Delta(f(A),f(B))$.  Thus,
\begin{equation*}
\Phi(\Delta(A,B)) \leq \Delta(f(A), f(B)) \leq \Phi'^{-1}(\Delta(A,B))
\end{equation*}
\end{remark}

\begin{prop}
\label{Qw2 leq ARCdim}
Let $(Z,d)$ be a compact, connected, Ahlfors regular metric space.  Then, $Q_w'(Z) \leq \ARCdim(Z)$.
\end{prop}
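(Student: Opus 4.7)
The plan is to reduce the proposition, via quasisymmetric invariance, to producing an explicit admissible function in an Ahlfors $q$-regular space where $q$ is strictly less than $p$, and then to build such a function exploiting the gap $p > q$. First I would fix $p > \ARCdim(Z)$. By definition of Ahlfors regular conformal dimension, one can choose a metric $\theta \in \mathscr{G}_d$ such that $(Z,\theta)$ is Ahlfors $q$-regular for some $q \in (\ARCdim(Z), p)$. The identity map $\iota \colon (Z,d) \to (Z,\theta)$ is an $\eta$-quasisymmetry for some $\eta$. Theorem \ref{QS inv qcap} gives $\pcap^d(A,B) \leq C\, \pcap^\theta(A,B)$ uniformly on positively separated open sets, while Lemma \ref{qs rel dist lemma} together with Remark \ref{rel dist comp} produces an increasing $\Phi \colon (0,\infty) \to (0,\infty)$ with $\Phi(t) \to \infty$ such that $\Delta_d(A,B) \geq t$ forces $\Delta_\theta(A,B) \geq \Phi(t)$. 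Taking the supremum in the definition of $\varphi_p$ then yields $\varphi_p^d(t) \leq C\, \varphi_p^\theta(\Phi(t))$, so it suffices to find $t_0 > 0$ with $\varphi_p^\theta(t_0) < \infty$ in the Ahlfors $q$-regular space $(Z,\theta)$.

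After relabeling, I assume $(Z,d)$ itself is Ahlfors $q$-regular with $p > q$, fix positively separated open $A,B$ with $\Delta(A,B) \geq t_0$, and set $\delta = \dist(A,B)$, $D = \min(\diam A, \diam B)$, so $\delta \geq t_0 D$. On the hyperbolic filling $X$, I would construct an admissible vertex function $\tau$ by weighting vertices across a range of levels spanning the scale of $D$ down to the scale of $\delta$. Concretely, $\tau$ would take the form $\tau(v) \asymp r(v)^{(p-q)/p}$ on vertices whose associated balls are relevant to separating $A$ from $B$; the exponent $(p-q)/p$ is tuned so that, at each level $k$, $q$-regularity bounds the vertex count by $\lesssim s^{kq}$ and the contribution to the weak-$p$ quantity $\lambda^p \#\{v : \tau(v) > \lambda\}$ is comparable across levels, giving a geometrically summable total. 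Admissibility rests on a path lemma to the effect that any path $\gamma$ in $X$ with limits in $A$ and $B$ must contain a vertex $v^*$ with $r(v^*) \gtrsim \delta$, obtained by considering the minimum-level vertex of $\gamma$ and chaining overlapping balls along the two sub-paths leading to the limits.

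The main obstacle is calibrating $\tau$ so that the weak $p$-norm bound is uniform in $A,B$ — in particular, independent of the shrinking scales $\delta$ and $D$. A single-scale test function such as the indicator of vertices at the level comparable to $\delta$ is admissible but yields $\wpnorm{\tau}^p \asymp \delta^{-q}$, which blows up as $\delta \to 0$. Overcoming this requires pooling contributions across multiple scales with the power-law weight $r(v)^{(p-q)/p}$: the strict inequality $p > q$ then converts the $q$-regular vertex counts at each scale into a convergent geometric series whose total is controlled solely by $p$, $q$, and $t_0$. A secondary technicality is the path lemma itself; it uses only the edge-adjacency criterion of the hyperbolic filling and the maximal $s^{-k}$-separation of the chosen nets, together with a careful chain-of-balls argument to see that the minimum-level vertex on any path between $A$ and $B$ has radius bounded below by a constant multiple of $\dist(A,B)$. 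Once both pieces are in place, $\varphi_p(t_0) < \infty$ follows, completing the proof.
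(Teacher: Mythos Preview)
Your reduction via quasisymmetric invariance and Lemma \ref{qs rel dist lemma} is fine, and the idea of passing to an Ahlfors $q$-regular model with $q<p$ is reasonable. The gap is in the admissibility argument: the ``path lemma'' you invoke is false. It is \emph{not} true that every path in $X$ with limits in $A$ and $B$ must contain a vertex $v^*$ with $r(v^*)\gtrsim\dist(A,B)$. Since $Z$ is connected, for every level $L$ the balls $\{B_v:\ell(v)=L\}$ cover $Z$, so the level-$L$ subgraph of $X$ is connected. Hence for any $L$, however large, one can build a path that descends from $a\in A$ to some vertex at level $L$, moves horizontally within level $L$ to a vertex over $b\in B$, and then descends to $b$; every vertex on this path has $r(v)\le 2s^{-L}$, which can be made arbitrarily small compared to $\dist(A,B)$. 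Considering the minimum-level vertex and chaining balls does not help: the minimum level is simply $L$, and the chaining bound you get is on the length of the horizontal portion, not on the radius of any single ball. So a weight of the form $\tau(v)\asymp r(v)^{(p-q)/p}$, normalised to be large only on vertices of scale $\gtrsim\dist(A,B)$, will not be admissible.

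The paper's proof sidesteps this entirely by using a weight that is admissible along \emph{every} crossing path, shallow or deep. After snowflaking so that the model space is exactly $p$-regular, it takes
\[
g(v)=\frac{r(B_v)}{\theta(v,A)\vee D/4}\,\chi_{\{\theta(v,A)\le 3D/4\}},\qquad D=\dist(A,B),
\]
which is the discrete analogue of the classical ring test function $1/(|x|\log R)$. The point is that along any path the increments of $\theta(\,\cdot\,,A)$ are bounded by successive radii, so the $g$-sum dominates the Riemann sum for $\int_{D/4}^{3D/4}dx/x=\log 3$ regardless of the levels involved; there is no need for any vertex of large radius. The weak-$p$ norm bound then comes from counting vertices using $p$-regularity. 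If you want to keep the gap $p>q$ rather than snowflaking to $p$, you still need a weight whose path sums telescope against distance to $A$ in this way; a pure power of $r(v)$ will not do it.
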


\begin{proof}
Let $p > \ARCdim$.   Let $\theta$ be an Ahlfors regular metric on $Z$ such that $\dim_H(Z,\theta) = p$ (this can be done, for example, by snowflaking).   We work in $(Z,\theta)$, which is justified by Lemma \ref{qs rel dist lemma}.  Let $A$ and $B$ be open with $\dist(A,B) > 0$.  Without loss of generality assume $\diam(A) \leq \diam(B)$ and let $ \Delta = \Delta(A,B) > 0$.  Set $m = \diam(A)$ and $D = \dist(A,B)$, so $\Delta = D / m$. Let $X=(V_X,E_X)$ be a hyperbolic filling for $(Z,\theta)$ with parameter $s>1$.  Consider $g \colon V_X \to \R$ defined by
\begin{equation*}
g(v) = \frac{r(B_v)}{\theta(v,A) \vee D/4} \chi_{\{\theta(v,a) \leq 3D/4\}}
\end{equation*}
where we have used $v$ as both the vertex in $V$ and the center of the corresponding ball $B_v$.

We estimate $\norm{g}_{p,\infty}^p$.  Observe that by Ahlfors regularity, as $\dim_H(Z,\theta) = p$ there is a $C>0$ such that the number of vertices on level $n$ with centers lying in a ball of radius $R$ is bounded above by $C R^p s^{np}$.  For $\lambda > 0$ we have $g(v) > \lambda$ if and only if 
\begin{equation*}
r(B_v) / \lambda > \theta(v,A) \vee D/4 \text{ and } \theta(v,A) \leq 3D/4.
\end{equation*}
Writing $r(B_v) = 2s^{-k}$, this is  
\begin{equation*}
2s^{-k} / \lambda > \theta(v,A) \vee D/4 \text{ and }\theta(v,A) \leq 3D/4.
\end{equation*}
If $2s^{-k} / \lambda > 3D/4$ we see all vertices with $\theta(v,A) \leq 3D/4$ on level $k$ satisfy these inequalities.  From our observation above, there are at most 
\begin{equation*}
C ((3D/4) + m)^p s^{kp}
\end{equation*}
such vertices on level $k$.  If $2s^{-k} / \lambda \leq D/4$ we see no vertices on level $k$ satisfy these inequalities.  For $k$ in between we have $\theta(v,A) < 2s^{-k}/\lambda \leq 3D/4$ and so there are at most 
\begin{equation*}
C ((2 s^{-k} / \lambda) + m)^p s^{kp} = C ((2 / \lambda) + m s^k)^p
\end{equation*}
many vertices satisfying these inequalities.  In this case, $D/4 < 2s^{-k}/\lambda$ so $s^k < 8/D\lambda$.  Thus, we obtain the upper bound
\begin{equation*}
C((2/\lambda) + 8m/D\lambda)^p \lesssim (2 + 8\Delta^{-1})^p / \lambda^p.
\end{equation*}
Combining these estimates, we have
\begin{equation*}
\#\{v:g(v) > \lambda\} \lesssim \sum_{2s^{-k} / \lambda > 3D/4} C ((3D/4) + m)^p s^{kp} + \sum_{D/4 < 2s^{-k} / \lambda \leq 3D/4} (2 + 4\Delta^{-1})^p / \lambda^p.
\end{equation*}
The first sum is geometric and hence estimated by the last term.  This is bounded by a constant times $C ((3D/4) + m)^p (8/3\lambda D)^p$ which is $C (2+ \Delta^{-1} 8/3)^p / \lambda^p$.  The second sum has a number of terms independent of $\lambda$.  Hence, $\norm{g}_{p,\infty}^p \lesssim (1 + \Delta^{-1})^p$.

We now investigate admissibility for $g$.  Consider a path of vertices $\gamma$ with limits in $A$ and $B$.  Consider the subpath $\gamma'$ of $\gamma$ denoted $v_0, v_1, \dots, v_N$ such that $v_0$ is the last vertex with $\theta(v_0, A) \leq D/4$ and $v_N$ is the first vertex after $v_0$ with $\theta(v_N, A) > 3D/4$.  With $B_{v_j} = B(v_j, r_j)$, we have $\theta(v_j, v_{j+1}) < r_j + r_{j+1} < (1 + (1+s))r_j$.  It follows that $\theta(v_{j+1}, A) \leq \theta(v_j, A) + (2+s)r_j$.  Thus,
\begin{equation*}
\sum_{j=0}^{N-1} \frac{(2+s) r_j}{\theta(v_j,A) \vee D/4} \geq \int_{D/4}^{3D/4} \frac{dx}{x}.
\end{equation*}
We now estimate the sum of $g$ over $\gamma$:
\begin{equation*}
\sum_\gamma (2+s) g(v) \geq \sum_{j=0}^{N-1} \frac{(2+s) r_j}{\theta(v_j,A) \vee D/4} \geq \int_{D/4}^{3D/4} \frac{dx}{x} = \log(3).
\end{equation*}

Now, set $\tau =(2+s)g/\log(3)$.  The above shows $\tau$ is admissible for $\pcap(A,B)$.  We have
\begin{equation*}
\norm{\tau}_{p,\infty}^p \lesssim \norm{g}_{p,\infty}^p \lesssim (1 + \Delta^{-1})^p.
\end{equation*}
It follows that $Q_w' \leq p$ and, as $p > \ARCdim$ was arbitrary, we conclude $Q_w' \leq \ARCdim$.
\end{proof}

Recall that $Q_w \leq Q_w'$, so the above also shows $Q_w \leq \ARCdim$.  The following result provides one condition for $Q_w = Q_w' = \ARCdim$.

\begin{lemma}[{\cite[Lemma 5.3]{L1}}]
\label{modulus Q_w bound}
Let $(Z,d,\mu)$ be a compact, connected metric measure space that is Ahlfors $Q$-regular, $Q > 1$, such that there exists $1 \leq p \leq Q$ and a family of paths $\Gamma$ with $\modp(\Gamma) > 0$.  Then there exist positively separated open balls $A$ and $B$ such that for all $q < Q$ we have $\capq(A,B) = \infty$.
\end{lemma}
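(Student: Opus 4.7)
The plan is to argue by contradiction: assuming $\capq(A,B) < \infty$ for every pair of positively separated open balls $A, B$ and some $q < Q$, derive $\modp(\Gamma) = 0$ and contradict the hypothesis. Since $\modp(\Gamma) > 0$, the family $\Gamma$ must contain a rectifiable curve $\gamma_0$; pick two points $a \neq b$ on $\gamma_0$ and form small open balls $A \ni a$, $B \ni b$ with $\dist(A,B) > 0$. A Fuglede-type restriction, discarding a subfamily of null $p$-modulus, ensures that the subfamily $\Gamma'$ of rectifiable curves joining $A$ to $B$ still has $\modp(\Gamma') > 0$.

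The technical heart is a transfer of admissibility. Given a vertex function $\tau \colon V_X \to [0,1]$ admissible for $\capq(A,B)$, define the Borel function
\begin{equation*}
\rho(z) = C_s \sum_{v \in V_X} \tau(v)\, r(B_v)^{-1} \chi_{B_v}(z), \quad z \in Z,
\end{equation*}
with $C_s > 0$ depending only on the scaling parameter. Any $\gamma \in \Gamma'$ induces, via scale-matching in the hyperbolic filling, a vertex path $(v_k)$ in $X$ with the length of $\gamma \cap B_{v_k}$ bounded below by a uniform multiple of $r(B_{v_k})$; summing and using admissibility of $\tau$ gives $\int_\gamma \rho\, ds \gtrsim 1$, so that $\rho$ (after renormalization) is admissible for $\modp(\Gamma')$. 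Next, using bounded overlap of balls at each level together with Ahlfors $Q$-regularity $\mu(B_v) \simeq r(B_v)^Q$,
\begin{equation*}
\int_Z \rho^p\, d\mu \lesssim \sum_{k \geq 0} s^{-k(Q-p)} \sum_{v \in V_X^k} \tau(v)^p.
\end{equation*}
The inner sums are bounded via the layer-cake identity together with the dual estimates $\#\{v \in V_X^k : \tau(v) > \lambda\} \leq \min\bigl(\norm{\tau}_{q,\infty}^q \lambda^{-q},\, C s^{kQ}\bigr)$; splitting the $\lambda$-integral at the crossover $\lambda_0 = (\norm{\tau}_{q,\infty}^q s^{-kQ})^{1/q}$ and summing the resulting geometric series in $k$ converges precisely because $q < Q$. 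The final bound takes the form $\modp(\Gamma') \leq \Phi(\norm{\tau}_{q,\infty})$ for a continuous function $\Phi$ with $\Phi(0) = 0$.

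The main obstacle is turning this into the contradiction: for a single ball pair a finite $\capq$ yields only a finite upper bound on $\modp(\Gamma')$, not the needed vanishing. The resolution, which is the technical crux of the lemma, is to iterate across a nested sequence of ball pairs $(A_k, B_k)$ accumulating on $\gamma_0$. A rectifiability argument guarantees that $\modp$ of the restricted family remains bounded below, while the scaling behavior of $\Phi$, involving the relative distance and diameters of $A_k, B_k$, forces $\capq(A_k, B_k) \to \infty$. Monotonicity of capacity (Lemma \ref{basic properties wcap}(i)) then promotes the divergence to $\capq(A, B) = \infty$ for the outer ball pair, yielding the stated existence.
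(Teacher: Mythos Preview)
The paper does not supply a proof of this lemma; it is simply quoted from \cite{L1}. Evaluating your proposal on its own terms, there is a genuine gap in the final step. Your bound $\modp(\Gamma') \leq \Phi(\norm{\tau}_{q,\infty})$, with $\Phi(0)=0$, yields only a \emph{positive lower bound} on $\capq(A,B)$, not $\capq(A,B)=\infty$. The nested-ball resolution you sketch cannot repair this: the estimate $\int_Z \rho^p\,d\mu \lesssim \norm{\tau}_{q,\infty}^p$ you derive is scale-invariant (the geometric series sums to a constant depending only on $p,q,Q$), so $\Phi$ carries no dependence on $\diam(A_k)$ or $\Delta(A_k,B_k)$, and there is no ``scaling behavior of $\Phi$'' to exploit. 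Meanwhile, as the balls shrink, $\modp$ of curves joining them typically tends to $0$, so the lower bound you need on the left side disappears as well. There is also an earlier soft spot: a curve $\gamma\subset Z$ does not by itself induce a path in $X$ with \emph{limits} in $A$ and $B$; one must append descending rays, and controlling their $\tau$-length requires the path lemmas (Lemma~\ref{First Path Lemma}), not a one-line ``scale-matching''.

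The argument that actually closes is level-by-level, in the spirit of the proof of Theorem~\ref{CLP theorem} later in this paper. From $\modp(A',B')>0$ for continua $A'\subset A$, $B'\subset B$, one gets $\Modp(A',B',X^n)\gtrsim s^{n(Q-p)}$: transferring a discrete admissible function on $X^n$ to a continuous one picks up a factor $r(B_v)^{Q-p}\simeq s^{-n(Q-p)}$ in the $L^p$ norm, since $\mu(B_v)\simeq r(B_v)^Q$ while the density scales like $r(B_v)^{-p}$. If $\norm{\tau}_{q,\infty}<\infty$ and $n$ is large, the path lemmas make $2\tau|_{V_X^n}$ admissible for the discrete modulus, whence $\norm{\tau|_{V_X^n}}_p^p\gtrsim s^{n(Q-p)}$. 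But your own layer-cake computation gives $\norm{\tau|_{V_X^n}}_p^p\lesssim \norm{\tau}_{q,\infty}^p\,s^{nQ(1-p/q)}$, and the exponent inequality $Q-p>Q(1-p/q)$ holds precisely when $q<Q$; this contradicts the lower bound for large $n$. (When $p=Q$ one uses the simpler fact $\ell^{q,\infty}\subset\ell^Q$, so $\norm{\tau|_{V_X^n}}_Q^Q\to 0$.) The exponential mismatch at each level, not an iteration over ball pairs, is what forces $\capq(A,B)=\infty$.
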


\section{Path lemmas and control functions}\label{Section control functions}

Recall for $(Z,d,\mu)$ a metric measure space and $X$ a hyperbolic filling, we define $\varphi_p(t) = \sup \{\pcap(A,B) : \Delta(A,B) \geq t\}$.  Unless otherwise stated, in this section it is assumed $Z$ and $X$ are fixed.  We investigate properties of $\varphi_p$ assuming it is not identically $\infty$.  It is clear that $\varphi_p$ is decreasing.

Because we work in a connected metric space and $\pcap$ is monotone, we can compute $\varphi_p(t)$ from only sets which have $\Delta(A,B) = t$

\begin{lemma} 
Let $(Z,d)$ be a compact, connected, Ahlfors regular metric space.  Define $\phi_p(t) = \sup \{\pcap(A,B) : \Delta(A,B) = t\}.$  Then, $\varphi_p = \phi_p$.
\end{lemma}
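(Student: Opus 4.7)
My plan is to reduce the claim to a single geometric construction. The inequality $\phi_p(t) \leq \varphi_p(t)$ is immediate from the definitions, so I focus on the reverse. The approach is this: given any positively separated open pair $(A,B)$ with $\Delta(A,B) \geq t$, I will exhibit positively separated open sets $A', B'$ with $A \subseteq A'$, $B \subseteq B'$, and $\Delta(A',B') = t$. Once this is done, the monotonicity of $\pcap$ from Lemma~\ref{basic properties wcap}(i) gives $\pcap(A,B) \leq \pcap(A',B') \leq \phi_p(t)$, and taking the supremum over all such $(A,B)$ yields $\varphi_p(t) \leq \phi_p(t)$.

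For the construction, I will assume $\Delta(A,B) > t$ (the case of equality being trivial) and, without loss of generality, that $\diam(A) \leq \diam(B)$. Writing $d_A = \diam(A) > 0$, I propose to take $A' = A$ and
\begin{equation*}
B' = \{z \in Z : \dist(z,A) > t \cdot d_A\}.
\end{equation*}
This set is open by continuity of $\dist(\cdot,A)$, and it contains $B$ because every $b \in B$ satisfies $\dist(b,A) \geq \dist(A,B) > t \cdot d_A$ by the assumption $\Delta(A,B) > t$. In particular $\diam(B') \geq \diam(B) \geq d_A$, so $\min(\diam(A'), \diam(B')) = d_A$.

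The main step, and the only place where connectedness of $Z$ enters, is to verify that $\dist(A',B') = t \cdot d_A$ exactly rather than merely $\geq t \cdot d_A$. Writing $f(z) = \dist(z,A)$, one has $\dist(A',B') = \inf_{z \in B'} f(z) \geq t \cdot d_A$ by construction, and I must show that $f$ attains values on $B'$ arbitrarily close to $t \cdot d_A$. This is where the intermediate value theorem enters: since $f$ is continuous and $Z$ is connected, the image $f(Z)$ is an interval. Since $f = 0$ on $A$ and $f \geq \dist(A,B) > t \cdot d_A$ on $B$, the image contains $[0, \dist(A,B)]$, and in particular $f$ attains values in every interval $(t \cdot d_A, t \cdot d_A + \varepsilon)$. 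This forces $\inf_{B'} f = t \cdot d_A$, so $\dist(A',B') = t \cdot d_A > 0$ and $\Delta(A',B') = t$. Monotonicity then closes the argument.
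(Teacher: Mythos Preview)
Your proof is correct and follows essentially the same route as the paper: enlarge the larger-diameter set to $\{z : \dist(z,\text{other set}) > t \cdot \min\diam\}$, then use connectedness of $Z$ to show the resulting distance is exactly $t \cdot \min\diam$, and conclude by monotonicity of $\pcap$. The only cosmetic difference is that you invoke the intermediate value theorem for $\dist(\cdot,A)$ directly, whereas the paper phrases the same connectedness step as a disconnection contradiction.
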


\begin{proof}
It is clear $\varphi_p \geq \phi_p$.    Suppose there exists $t_0$ such that $\varphi_p(t_0) > \phi_p(t_0)$. This means there are positively separated open sets $A,B \subseteq Z$ with $\Delta(A,B) = t > t_0$ and $\pcap(A,B) > \phi_p(t_0)$.  We may assume $\diam(B) \leq \diam(A)$.  Then, $\dist(A,B) = t \diam(B) > t_0 \diam(B)$.  Set $A' = \{z \in Z : d(z, B) > t_0 \diam(B) \}$.  We see $A'$ is open and $A \subseteq A'$ so $\pcap(A', B) \geq \pcap(A,B)$.  

We show $\dist(A', B) = t_0 \diam(B)$.  By definition, $\dist(A', B) \geq t_0 \diam(B)$.  Suppose $\dist(A', B) = t_1 \diam(B)$ with $t_1 > t_0$.  Set $U = Z \setminus \overline{A'}$, so $U$ is open.  Set $U' = \{z \in Z : d(z, B) > h \diam(B)\}$ where $h = (t_0+t_1)/2$.  Then, $\overline{A'} \subseteq U'$ so $Z = U \cup U'$ forms a disconnection of $Z$, a contradiction.  Hence, $\Delta(A', B) = t_0$ and $\pcap(A', B) \geq \pcap(A,B) > \phi_p(t_0)$ by monotonicity, contradicting the definition of $\phi_p$.
\end{proof}

We now state the main result of this section.

\begin{thm} \label{varphi equivalence}
The following are equivalent: \\
\indent {\normalfont(i)} The function $\varphi_p$ is eventually finite. \\
\indent {\normalfont(ii)} The function $\varphi_p$ is always finite. \\
\indent {\normalfont(iii)} The function $\varphi_p$ has the property that $\lim_{t \to \infty} \varphi(t) = 0$.
\end{thm}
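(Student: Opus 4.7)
The plan is to establish the trivial implications (iii) $\Rightarrow$ (ii) $\Rightarrow$ (i), which are immediate from the monotonicity of $\varphi_p$, and then prove the two nontrivial directions (i) $\Rightarrow$ (ii) and (ii) $\Rightarrow$ (iii). The first will use a covering argument; the second, which is where the real work lies, will proceed by assembling an admissible function on $V_X$ from admissible pieces at many scales.

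For (i) $\Rightarrow$ (ii), suppose $\varphi_p(t_0) < \infty$. Given $0 < t < t_0$ and positively separated open $A, B$ with $\Delta(A, B) \geq t$ and (without loss of generality) $\diam B \leq \diam A$, I would use the doubling property of $Z$ to cover $B$ by $N = N(t_0/t, Q)$ open subsets $B_i \subseteq B$ of diameter at most $(t/t_0) \diam B$. Each pair then satisfies $\Delta(A, B_i) \geq (t_0/t)\Delta(A,B) \geq t_0$, so $\pcap(A, B_i) \leq \varphi_p(t_0)$, and countable subadditivity (Lemma~\ref{basic properties wcap}(ii)) yields $\pcap(A, B) \leq N \varphi_p(t_0)$, a bound uniform in $(A,B)$.

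For (ii) $\Rightarrow$ (iii), fix $A, B$ with $\Delta(A,B) = t$ large, WLOG $\diam B \leq \diam A$, and write $m = \diam B$, $D = \dist(A, B)$. Take dyadic radii $r_j = 2^j m$ for $j = 1, \dots, J \simeq \log_2 t$ with $r_J < D$, and consider the open thickenings $B^{r_{j-1}} = \{z : d(z,B) < r_{j-1}\}$ together with the open complements $U_j = \{z : d(z, B) > r_j\}$. Each ring pair $(U_j, B^{r_{j-1}})$ satisfies $\Delta(U_j, B^{r_{j-1}}) \gtrsim 1$, so hypothesis (ii) gives $\pcap(U_j, B^{r_{j-1}}) \leq M := \varphi_p(c_0)$ for a fixed $c_0 \in (0,1)$; I would pick $\tau_j$ admissible for this pair with $\wpnorm{\tau_j}^p \leq 2M$, built from the scale-localized construction in the proof of Proposition~\ref{Qw2 leq ARCdim} so that $\tau_j$ is supported on vertices of level comparable to $\log_s(1/r_j)$. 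Since any path from $A$ to $B$ is also a path from $A \subseteq U_j$ to $B \subseteq B^{r_{j-1}}$, the function $\tau := J^{-1} \sum_j \tau_j$ is admissible for $(A,B)$, and because the supports of the $\tau_j$ live in disjoint (or at worst boundedly overlapping) scale bands in $V_X$, one obtains
\[
\wpnorm{\tau}^p \leq J^{-p} \cdot C \sum_j \wpnorm{\tau_j}^p \leq C M \, J^{1-p},
\]
which for $p > 1$ tends to $0$ as $t \to \infty$, giving $\pcap(A,B) \to 0$ and hence (iii).

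The hard part will be arranging the $\tau_j$ so that their supports in $V_X$ are (essentially) disjoint while keeping $\sum_\gamma \tau_j \geq 1$ on every admissible path. A priori the extremal admissible function for $\pcap(U_j, B^{r_{j-1}})$ could spread across many levels of the hyperbolic filling, and blind truncation to a single level band can destroy admissibility. My plan is to circumvent this by using the Proposition~\ref{Qw2 leq ARCdim}-style admissible function adapted to each ring: since $\Delta(U_j, B^{r_{j-1}}) \gtrsim 1$, the construction produces a function whose mass sits on vertices $v$ with $r(B_v) \simeq r_j$ and $d(v, B) \simeq r_j$, giving the required scale localization. Verifying via the path lemmas of this section that these adapted functions really do have bounded overlap across $j$, and that the overlap constant does not absorb the $J^{1-p}$ decay, is the technical linchpin of the argument.
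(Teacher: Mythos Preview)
Your overall architecture for (ii) $\Rightarrow$ (iii) matches the paper's: build many admissible functions with disjoint support, average them, and use $p>1$ to get decay $J^{1-p}$. Your proof of (i) $\Rightarrow$ (ii) is also essentially the paper's. The gap is in how you propose to obtain the disjointly supported $\tau_j$.

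You want to use the explicit function from Proposition~\ref{Qw2 leq ARCdim} and claim it is ``supported on vertices of level comparable to $\log_s(1/r_j)$''. It is not. That function is $g(v) = \dfrac{r(B_v)}{\theta(v,A)\vee D/4}\,\chi_{\{\theta(v,A)\le 3D/4\}}$, which is nonzero on \emph{every} level: the cutoff is in the $Z$-distance of the center to $A$, not in the level of $v$. So the supports of your $\tau_j$ would overlap across all scales, and the key disjointness inequality $\wpnorm{\tau}^p \le CJ^{1-p}\sum_j\wpnorm{\tau_j}^p$ is not available. Worse, the $\ell^{p,\infty}$ bound for $g$ in that proposition uses Ahlfors $p$-regularity of the ambient space (the vertex-counting step ``there are at most $CR^p s^{kp}$ vertices\ldots'' is exactly where $\dim_H = p$ enters). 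Under hypothesis (ii) alone you know nothing about the dimension of $Z$, so you cannot invoke that construction.

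The paper resolves the localization problem differently: it takes the admissible $\tau$ supplied by hypothesis (ii) for a single ring, then \emph{restricts} it to a package $P_i$ of nested hulls $H(B_{k+1})\setminus H(B_k)$ in $V_X$ (these packages are genuinely disjoint by construction). Restriction of course threatens admissibility, and this is where the real work lies: the paper develops the binary path structures and the Main Path Lemma (Lemma~\ref{Last Path Lemma}) to show that any path escaping the package can be rerouted through an ascending path of small $\tau$-length, forcing the restricted $2\tau|_{P_i}$ to still be admissible. You correctly flagged ``truncation can destroy admissibility'' as the linchpin, but your proposed workaround via Proposition~\ref{Qw2 leq ARCdim} does not work; you need the path lemmas of this section to repair admissibility after restriction.
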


It is clear that (ii) $\implies$ (i) and that (iii) $\implies$ (i). We now show (i) $\implies$ (ii).

\sloppypar
\begin{proof}[Proof of {\normalfont (i)} $\implies$ {\normalfont (ii)}]  Fix $t_0$ such that $\varphi_p(t_0) = C < \infty$.  Let $A, B$ be such that $\Delta(A,B) = t < t_0$.  We may assume $\diam(B) \leq \diam(A)$.  For $b \in B$, consider $B(b, \diam(B))$.  As $Z$ is doubling, we can find at most $N = N(t)$ balls $B_i$ that intersect $B$, have diameter $\diam(B_i) \leq t \diam(B) / (2 + t_0)$, and have the property that $B(b, \diam(B)) \subseteq \cup_i B_i$.  We see $t / (2+t_0) < 1$ so for each $i$ we have $\diam(B_i) < \diam(B)$.  Hence, $\Delta(A, B_i) = \dist(A,B_i) / \diam(B_i)$.  Since $\dist(A, B_i) \geq \dist(A, B) - \diam(B_i)$ it follows that $\Delta(A, B_i) > t_0$.   Thus, $\pcap(A, B_i) \leq \varphi_p(t_0) = C$.  By additivity and monotonicity, $\pcap(A, B) \leq \pcap(A, \cup_i B_i) \leq N(t) C$.
\end{proof}

\subsection{Path lemmas}
\fussy
To show (ii) $\implies$ (iii) is considerably more involved.  To do so, we first develop some path lemmas.  These require binary path structures from \cite{L1} which we explain here for convenience.  

\begin{defn}
Given a vertex $v \in V_X$, a {\em binary path structure} $T_v$ with {\em splitting constant M} is a subgraph with the following properties. \\
\indent (i) Vertices $w \in T_v$ have $\ell(w) \geq \ell(v)$. \\
\indent (ii) There are exactly $2^k$ vertices with level $\ell(v) + kM$.  Denote these $T_v^{kM}$. \\
\indent (iii) For every vertex $w \in T_v^{kM}$, there are exactly two vertices $w_0, w_1 \in T_v^{(k+1)M}$  such that $B_{w_0}, B_{w_1} \subseteq B_w$. \\
\indent (iv) For every vertex $w \in T_v^{kM}$, given $w_0$ and $w_1$ from (iii) there are two paths $\Gamma_w = \{\gamma_{w_0},\gamma_{w_1}\}$ (of vertices and edges), with $\gamma_{w_i}$ connecting $w$ to $w_i$ for each $i = 0,1$, such that the vertices in $\gamma_{w_i}$ strictly increase in level (from $\ell(w)$ to $\ell(w_i)$).  This collection of paths has the property that $T_v = \cup_{k=0}^{\infty} \cup_{w \in T_v^{kM}} \Gamma_w$. \\
\indent (v) Every vertex $x$ in the paths in $\Gamma_w$ from (iv) satisfies $B_x \subseteq 2B_w$.  \\
\indent (vi) If $w, w'$ are vertices with level $\ell(v) + kM$, then $2B_w \cap 2B_{w'} = \emptyset$.  \\
We call $v$ the {\em root} of the binary path structure $T_v$ and say $T_v$ is {\em rooted} at $v$.
\end{defn}

\begin{remark}
\label{2B remark}
We note from (vi) that if $w, w' \in T_v^{kM}$ for some $k$ and $\gamma \in \Gamma_w$ and $\gamma' \in \Gamma_{w'}$, then $\gamma \cap \gamma' = \emptyset$ unless $w = w'$.
\end{remark}

We can construct binary path structures from any point using the following lemma.

\begin{lemma}[{\cite[Lemma 4.4]{L1}}]
\label{Vertex splitting lemma}
There exists a constant $M>0$ depending on the hyperbolic filling parameter $s$ and the Ahlfors regularity constants such that whenever $v \in V_X$ is a vertex in $X$, there exist two vertices $v_1, v_2$ with levels $\ell(v_j) = \ell(v) + M$, $2 B_{v_1} \cap 2 B_{v_2} = \emptyset$, and $B_{v_1}, B_{v_2} \subseteq B_v$.
\end{lemma}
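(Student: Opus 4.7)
The plan is to exploit the connectedness of $Z$ (together with $\diam(Z)=1$) to find two points of $Z$ well inside $B_v$ that are separated by a distance comparable to $s^{-k}$, where $k=\ell(v)$, and then to approximate each of these points by a vertex of the maximal $s^{-(k+M)}$-separated set $Z_{k+M}$. For $M$ large enough (in terms of $s$), this will deliver two vertices satisfying all three required properties.

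For the first step I would take $p_1 := v$ itself. To produce $p_2$, observe that the continuous function $z \mapsto d(v,z)$ on $Z$ has a connected image containing $0$, hence an interval $[0,R_v]$. The triangle inequality forces $R_v \geq \diam(Z)/2 = 1/2$, so $[0,1/2] \subseteq d(v,Z)$. Setting $\rho_k := \min(s^{-k}, 1/2)$ and picking $p_2 \in Z$ with $d(v,p_2) = \rho_k$, one gets $d(p_1,p_2) = \rho_k \geq c_0 s^{-k}$ for some constant $c_0 = c_0(s) > 0$ (the constant only matters in the finitely many cases where $s^{-k} > 1/2$; for $s^{-k} \leq 1/2$ one may take $c_0 = 1$). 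Both points lie in $\overline{B(v,s^{-k})} \subseteq B_v$.

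For the second step, use maximality of $Z_{k+M}$ to choose $v_i \in Z_{k+M}$ with $d(p_i,v_i) \leq s^{-(k+M)}$. Then $\ell(v_i) = k+M$ automatically. The inclusion $B_{v_i} \subseteq B_v$ amounts to $d(v,v_i) + 2s^{-(k+M)} \leq 2s^{-k}$; the bound $d(v,v_i) \leq s^{-k} + s^{-(k+M)}$ reduces this to $3s^{-(k+M)} \leq s^{-k}$, i.e.\ $s^M \geq 3$. The disjointness condition $2B_{v_1} \cap 2B_{v_2} = \emptyset$ amounts to $d(v_1,v_2) > 8s^{-(k+M)}$, and $d(v_1,v_2) \geq d(p_1,p_2) - 2s^{-(k+M)} \geq c_0 s^{-k} - 2s^{-(k+M)}$ makes this hold once $s^M > 10/c_0$. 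Choosing $M$ to exceed both thresholds completes the construction, and the resulting $M$ depends only on $s$ (and, via $c_0$, on the normalisation $\diam(Z)=1$).

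The only real obstacle is the bookkeeping at small levels: when $s^{-k} > \diam(Z)/2$ there need not be a point of $Z$ at distance exactly $s^{-k}$ from $v$, and $B_v$ may in fact equal all of $Z$. Selecting $p_2$ at distance $\rho_k = \min(s^{-k},1/2)$ sidesteps this uniformly, at the mild cost of making $c_0$ and hence $M$ depend on $s$. Beyond this, the argument requires no deeper machinery — only connectedness of $Z$, continuity of the distance function, and the defining maximality property of $Z_{k+M}$.
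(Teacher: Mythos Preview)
Your argument is correct and gives the lemma under the standing hypothesis that $Z$ is connected. The paper does not prove the lemma here, citing \cite{L1} and noting only that ``it follows from an Ahlfors regularity computation.'' That route is different from yours: one compares the measure of (a slightly shrunken) $B_v$, which is $\gtrsim s^{-kQ}$, with the measure of a ball of radius comparable to $s^{-(k+M)}$, which is $\lesssim s^{-(k+M)Q}$, and concludes that for $M$ large enough a single such small ball cannot cover the relevant portion of $B_v$; this forces the existence of two well-separated vertices at level $k+M$. Your approach is more elementary --- it uses only connectedness and the maximality of $Z_{k+M}$, never touching the measure --- and as a bonus yields an $M$ depending only on $s$, not on the regularity constants. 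The trade-off is that the measure argument (or the closely related uniform-perfectness argument) goes through without connectedness, so it ports to more general settings. One minor wrinkle: for $k=0$ the vertex $O$ has no designated center in $Z$, so $p_1 := v$ is strictly speaking undefined there; but since $B_O = Z$, the containment $B_{v_i} \subseteq B_O$ is automatic and you may take $p_1$ to be any point of $Z$, after which the rest of your estimates go through unchanged.
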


We do not prove this here, but note that it follows from an Ahlfors regularity computation.  

Our first path lemma is a refinement of \cite[Lemma 4.7]{L1}.  It involves {\em ascending edge paths} in a binary path structure.  These are paths such that the corresponding vertex paths start at the root $v$ and strictly increase in level.  This means if $\{v_k\}$ is the sequence of vertices our path travels through, then we must have $\ell(v_{k+1}) = \ell(v_k) + 1$ for all $k$.  We call $\tau$ {\em admissible} for ascending edge paths if the $\tau$-length of each ascending edge path is at least $1$, where the $\tau$-length of an edge path $\gamma$ is $\sum_{e \in \gamma} \tau(e)$.

\begin{lemma}
\label{First Path Lemma}
Let $v \in V_X$ and let $T_v$ be a binary path structure with splitting constant $M$ and root vertex $v$.  Let $p > 1$.  Then, there is a function $S < \infty$ depending on $p$ and $M$ with the following property: whenever $\tau \colon E_X \to [0,\beta]$ is a function with $\norm{\tau}_{p,\infty} \leq a$, then there is an ascending edge path in $T_v$ with $\tau$-length bounded above by $S(a,\beta)$.  Moreover, $S(a,\beta) \to 0$ as $\beta \to 0$ for fixed $a$.
\end{lemma}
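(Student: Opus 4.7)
The plan is to use the probabilistic method on a canonical random ascending path in $T_v$. Let $E_k$ denote the set of edges lying on some $\gamma_{w_i}$ with $w \in T_v^{kM}$ and $i \in \{0,1\}$, so $E_k$ collects all edges that are traversed when passing from generation $k$ to generation $k+1$ of the binary path structure. Each $\gamma_{w_i}$ has $M$ edges and, by Remark \ref{2B remark}, the families $\Gamma_w$ and $\Gamma_{w'}$ are disjoint for distinct $w,w' \in T_v^{kM}$, so $\#E_k \le 2^{k+1}M$.

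Construct a random infinite ascending edge path starting at $v$ by, at each generation $k$, independently and uniformly choosing one of the two children of the current vertex in $T_v^{kM}$. This induces a probability measure on ascending edge paths in $T_v$, and for each edge $e \in E_k$ the probability that $e$ lies on the random path is at most $2^{-k}$. Linearity of expectation (justified via finite truncations and monotone convergence) then gives
\[
\mathbb{E}\Bigl[\sum_{e \in \gamma} \tau(e)\Bigr] \le \sum_{k=0}^{\infty} 2^{-k} \sum_{e \in E_k} \tau(e).
\]

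To estimate the inner sum I combine both hypotheses $\tau \le \beta$ and $\wpnorm{\tau} \le a$ via the layer-cake identity
\[
\sum_{e \in E_k} \tau(e) = \int_0^{\beta} \#\{e \in E_k : \tau(e) > \lambda\}\,d\lambda \le \int_0^{\beta} \min\bigl(\#E_k,\; a^p/\lambda^p\bigr)\,d\lambda.
\]
Splitting this integral at $\lambda_0 = a(\#E_k)^{-1/p}$ and using $p>1$ yields
\[
\sum_{e \in E_k} \tau(e) \le \min\Bigl(\beta \,\#E_k,\; \tfrac{p}{p-1}\,a\,(\#E_k)^{(p-1)/p}\Bigr).
\]
Now choose a threshold $k_0$ at which these two bounds coincide, namely $\#E_{k_0} \sim (a/\beta)^p$, so $k_0 \sim p\log_2(a/\beta)$. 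Applying the first bound for $k \le k_0$ yields a contribution of order $\beta M k_0$, while for $k > k_0$ the second bound produces a geometric tail with ratio $2^{-1/p}$ whose leading term is of order $\beta\, M^{(p-1)/p}$. Adding these produces a function $S(a,\beta) < \infty$ with $S(a,\beta) \to 0$ as $\beta \to 0$ for fixed $a$, since $\beta k_0 \sim \beta \log(1/\beta) \to 0$.

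Since the expected $\tau$-length of the random ascending path does not exceed $S(a,\beta)$, some actual ascending edge path in $T_v$ realizes a $\tau$-length bounded by $S(a,\beta)$, which is the conclusion. The main obstacle is that the weak $L^p$-bound by itself yields only a constant estimate of the expected $\tau$-length, independent of $\beta$; it is essential to exploit the pointwise bound $\tau \le \beta$ on the small-$k$ portion of the sum and balance the two estimates at $k_0 \sim p\log_2(a/\beta)$ in order to drive $S$ to zero with $\beta$.
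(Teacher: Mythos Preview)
Your proof is correct and rests on the same averaging idea as the paper's: both bound the mean $\tau$-length of ascending paths in $T_v$ by a quantity tending to zero with $\beta$. The paper phrases this as bounding the total $\tau$-length summed over the $2^k$ paths to level $kM$, argues that the extremal configuration places mass on the lowest levels, overestimates by assigning weight $\beta$ to every edge up to a threshold level $N\sim\log_2(a^p/(\beta^p M))$ and using the weak-$\ell^p$ tail thereafter, and finally runs a diagonalization to extract an infinite path from the finite minimizers $\gamma_k$. Your probabilistic packaging is cleaner on two counts: the layer-cake estimate $\sum_{e\in E_k}\tau(e)\le\min\bigl(\beta\,\#E_k,\ \tfrac{p}{p-1}a(\#E_k)^{(p-1)/p}\bigr)$ replaces the paper's ad hoc rearrangement, and because the random path is already infinite the first-moment method delivers the desired infinite ascending path directly, bypassing diagonalization. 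The resulting bounds agree quantitatively---both give roughly $\beta M\cdot p\log_2(a/\beta)$ from the low generations plus a geometric tail of order $\beta M$ (your stated order $\beta M^{(p-1)/p}$ for the tail's leading term should be $\beta M$, since at $k_0$ the two bounds coincide at $2\beta M$, but this does not affect the limit).
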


The proof is similar to that of Lemma 4.4 in \cite{L1}; we include it here for completeness.  We use the notion of the level of an edge $e$ which we define as the minimum level of the vertices incident to $e$.

\begin{proof}
Suppose $T_v$ is as above and $\tau : E_X \to [0,\beta]$ is a function with $\norm{\tau}_{p,\infty} \leq a$.  In this proof, we will refer to the level of a vertex to mean the distance that vertex is from $v$.  That is, the level of a vertex $w$ in $T_v$ is $\ell(w) - \ell(v)$ where $\ell$ is the level function from $X$.  Note that there are at least $M$ edges in the first $M$ levels of $T_v$, at least $2M$ edges in levels $M+1$ to $2M$ of $T_v$, and in general at least $2^{k-1}M$ edges in the levels $(k-1)M+1$ to $kM$ in $T_v$.  Hence, the total number of edges up to level $kM$ is at least
\begin{equation*}
\sum_{j=1}^k 2^{j-1}M = (2^{k}-1)M.
\end{equation*}
We bound the total path length $\tau$ gives to paths.  That is, if $\Gamma_k$ is the collection of ascending paths starting at $v$ and ending at level $kM$, then we bound
\begin{equation*}
\sum_{\gamma \in \Gamma_k} \sum_{e \in \gamma} \tau(e).
\end{equation*}
This is increased the more mass is placed on lower level edges, which we use to bound this quantity.  From the weak norm definition, we have
\begin{equation}
\label{weak lp}
\#\{e : \tau(e) > \lambda\} \leq a^p / \lambda^p.
\end{equation}
Choose $N \in \N_0 = \N \cup \{0\}$ such that $(2^N - 1)M < a^p/\beta^p \leq (2^{N+1}-1)M$.  We overestimate by assuming all edges up to level $(N+1)M$ carry weight $\beta$.  Inequality (\ref{weak lp}) then tells us all other weights satisfy $\tau(e) \leq a((2^{N+1}-1)M)^{-1/p}$.  Using this bound for the next (at least) $2^{N+1}M$ edges up to level $(N+2)M$, we see the weights placed on edges with levels more than $(N+2)M$ satisfy $\tau(e) \leq a((2^{N+2}-1)M)^{-1/p}$.  Continuing in this manner, we obtain the following bound:
\begin{equation*}
\sum_{\gamma \in \Gamma_k} \sum_{e \in \gamma} \tau(e) \leq 2^k \biggl(M \beta (N+1) + \sum_{j=N+1}^{k-1} \frac{M a}{[(2^{j} - 1)M]^{1/p}} \biggr).
\end{equation*}
There are at least $2^k$ different ascending edge paths from $v$ to level $kM$.  It follows that for each $k$, there is at least one path $\gamma_k$ from $v$ to level $kM$ with $\tau$-length bounded above by 
\begin{equation*}
\sum_{e \in \gamma_k} \tau(e) \leq \biggl(M \beta (N+1) + \sum_{j=N+1}^{k-1} \frac{M a}{[(2^{j} - 1)M]^{1/p}} \biggr).
\end{equation*}
We run a diagonalization argument to find an infinite ascending edge path.  This goes as follows: first, we can pass to a subsequence and assume that the paths $\gamma_k$ agree on the first $M$ edges.  Then, we can pass to a further subsequence and assume that the paths $\gamma_k$ agree on the first $2M$ edges.  Repeating this we construct a path $\gamma$ with $\tau$-length bounded above by
\begin{equation*}
S(a,\beta) = \biggl(M \beta (N+1) + \sum_{j=N+1}^\infty \frac{M a}{[(2^{j} - 1)M]^{1/p}} \biggr).
\end{equation*}

We compute from $(2^{N} - 1)M < a^p / \beta^p$ that $N < \log(1 + \frac{a^p}{ \beta^p M}) / \log(2)$.  Hence, we see $M\beta (N+1) \to 0$ as $\beta \to 0$ for fixed $a$.  Likewise, from our other estimate, $N \geq \log(1 + \frac{a^p}{\beta^p M})/\log(2) - 1$ and so $N \to \infty$ as $\beta \to 0$ for fixed $a$.  Thus, the second term in the expression for $S(a,\beta)$ tends to $0$ as $\beta \to 0$ as this is a convergent series.  Hence, for fixed $a$, we see $S(a,\beta) \to 0$ as $\beta \to 0$.  
\end{proof}

In what follows we will need a stronger conclusion.  Above we bound the average path length from $\tau$ by $S(a,\beta)$.  Instead, we wish to guarantee at least some number $m$ paths have a ``small'' bounded length.  For this, we first prove the result abstractly and then use 
\begin{equation*}
S_k(a,\beta) = \biggl(M \beta (N+1) + \sum_{j=N+1}^{k-1} \frac{M a}{[(2^{j} - 1)M]^{1/p}} \biggr),
\end{equation*} 
the average bound we found above up to to level $k$ with $k > N+2$.  
 
\begin{lemma}\label{Path average lemma}
Suppose $\Gamma$ is a finite collection of paths and $\tau \colon E_X \to [0,\infty)$ is a function such that the average $\tau$-length of paths in $\Gamma$ is $\alpha$.  Let $h > \alpha$.  Then, there are at least $|\Gamma|(1-\alpha / h)$ paths with $\tau$-length bounded above by $h$.
\end{lemma}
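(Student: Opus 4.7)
The plan is to prove this by a direct counting argument, essentially Markov's inequality applied to the (uniform) counting measure on $\Gamma$. Let $N = |\Gamma|$ and let $n$ denote the number of paths in $\Gamma$ whose $\tau$-length exceeds $h$. The $\tau$-lengths of paths are nonnegative, so I would first observe that the sum of $\tau$-lengths over all of $\Gamma$ equals $\alpha N$ by the definition of the average.

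Next, I would bound this total sum below by restricting to the $n$ paths of large $\tau$-length. Since each such path contributes strictly more than $h$ to the sum and the remaining paths contribute nonnegatively, we obtain
\begin{equation*}
\alpha N = \sum_{\gamma \in \Gamma} \sum_{e \in \gamma} \tau(e) \geq n h.
\end{equation*}
Dividing by $h > 0$ gives $n \leq \alpha N / h$, so the number of paths of $\tau$-length at most $h$ is at least $N - n \geq N(1 - \alpha / h)$, which is the claim.

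There is really no obstacle here; the statement is a one-line Markov/pigeonhole inequality and the only step that requires any care is making sure the inequality directions are consistent (strict vs.\ weak) and that the hypothesis $h > \alpha$ is used only to guarantee that the asserted bound $N(1-\alpha/h)$ is meaningful (nonnegative), rather than being needed in the argument itself. The lemma will subsequently be combined with the estimate $S_k(a,\beta)$ from the proof of Lemma \ref{First Path Lemma} to extract many short ascending paths in a binary path structure, which is presumably the real work in the section.
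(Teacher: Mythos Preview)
Your proof is correct and is essentially identical to the paper's own argument: the paper also observes that the total $\tau$-length is $|\Gamma|\alpha$, bounds $mh \leq |\Gamma|\alpha$ for the $m$ paths of $\tau$-length greater than $h$, and concludes $|\Gamma| - m \geq |\Gamma|(1-\alpha/h)$.
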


\begin{proof}
The total $\tau$-length given to paths is $|\Gamma|\alpha$.  Suppose $m$ paths have $\tau$-length greater than $h$.  Then, we must have $mh \leq |\Gamma|\alpha$ and so $m \leq |\Gamma|\alpha / h$.  Thus, there are at least $|\Gamma| - |\Gamma|\alpha / h = |\Gamma|(1-\alpha / h)$ paths with $\tau$-length bounded above by $h$.
\end{proof}

In our situation we use the paths formed by concatenating those from $\Gamma_w$ in the definition of a binary path structure, so $|\Gamma| = 2^k$.  We also have $\alpha \leq S_k(a,\beta) \leq S(a,\beta)$.  Hence, there are at least
\begin{equation*}
2^k (1 - S(a,\beta)/h)
\end{equation*}
paths with $\tau$-length bounded above by $h$.

\begin{lemma}[Main Path Lemma]
\label{Last Path Lemma}
Let $T_v$ be a binary path structure with splitting constant $M$ and let $p>1$.  Suppose $\tau : E_X \to [0,1]$ satisfies $\norm{\tau}_{p,\infty} \leq a$.  Let $\delta > 0$.  Then, there is a real number $\beta > 0$ and level $\ell_0 = \ell_0(a, \beta, M, p)$ with the following property: if $\tau(e) \leq \beta$ for all $e$ with $\ell(e) \leq \ell_0$ (where $\ell(e)$ is the maximal level of the endpoints of $e$), then there is an infinite ascending path $\gamma$ with $\sum_{e \in \gamma} \tau(e) < \delta$.
\end{lemma}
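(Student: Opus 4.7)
The plan is to split the sought infinite ascending path into two pieces: an initial segment inside $T_v$ going from $v$ up to some deep level $\ell_0$, and an infinite tail starting there. On the initial segment the hypothesis $\tau \leq \beta$ gives a bound linear in the number of edges, while on the tail we exploit the splitting of $T_v$ into $2^K$ pairwise edge-disjoint sub-binary-path-structures rooted at level $\ell(v) + KM$. The weak-$\ell^p$ norm then forces at least one of these sub-structures to see only tiny values of $\tau$, and Lemma \ref{First Path Lemma} applied there produces a short infinite tail.

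Given $\delta > 0$, choose parameters in three steps. First select $\epsilon > 0$ small enough that $S(a, \epsilon) < \delta/2$, which is possible by the final clause of Lemma \ref{First Path Lemma}. Next choose $K \in \N$ with $2^K > a^p/\epsilon^p$ and put $\ell_0 := \ell(v) + KM$. Finally choose $\beta > 0$ with $KM\beta < \delta/2$, for instance $\beta := \delta/(2(KM+1))$. In this scheme $\ell_0$ is determined by $(a, \epsilon, M, p)$ and $\epsilon$ by $(a, \delta, M, p)$; after eliminating $\delta$ via $\delta = 2(KM+1)\beta$, one may express $\ell_0$ as a function of $(a, \beta, M, p)$, as the statement requires.

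Now assume $\tau(e) \leq \beta$ on every edge $e$ with $\ell(e) \leq \ell_0$. By property (ii) of the binary path structure, $T_v$ has exactly $2^K$ vertices $w_1, \dots, w_{2^K}$ at level $\ell_0$, and by property (vi) together with Remark \ref{2B remark} the corresponding sub-binary-path-structures $T_{w_1}, \dots, T_{w_{2^K}}$ are pairwise edge-disjoint. The weak-$\ell^p$ bound gives $\#\{e \in E_X : \tau(e) > \epsilon\} \leq a^p/\epsilon^p < 2^K$, so pigeonholing produces some $w = w_{j_0}$ with $\tau(e) \leq \epsilon$ on every edge of $T_w$. Apply Lemma \ref{First Path Lemma} to $T_w$ using the function $\tilde{\tau}$ equal to $\tau$ on edges of $T_w$ and zero elsewhere, which takes values in $[0, \epsilon]$ globally and has weak norm at most $a$; this yields an infinite ascending edge path $\gamma_{\mathrm{tail}}$ in $T_w$ beginning at $w$ with $\sum_{e \in \gamma_{\mathrm{tail}}} \tau(e) \leq S(a, \epsilon) < \delta/2$. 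Meanwhile, iterating property (iv) furnishes an ascending edge path $\gamma_{\mathrm{init}}$ in $T_v$ from $v$ to $w$ consisting of exactly $KM$ edges, each satisfying $\ell(e) \leq \ell_0$ and hence $\tau(e) \leq \beta$, so $\sum_{e \in \gamma_{\mathrm{init}}} \tau(e) \leq KM\beta < \delta/2$. Concatenating $\gamma_{\mathrm{init}}$ and $\gamma_{\mathrm{tail}}$ at $w$ produces an infinite ascending edge path from $v$ with total $\tau$-length strictly less than $\delta$.

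The main obstacle is the pigeonhole step: one has to arrange that the doubling growth $2^K$ of the binary path structure outruns the at-most-$a^p/\epsilon^p$ edges that the weak-$\ell^p$ norm allows to exceed $\epsilon$, thereby isolating a sub-structure on which $\tau$ is uniformly small enough for Lemma \ref{First Path Lemma} to deliver a convergent tail. This is the quantitative interplay that fixes the order in which $\epsilon$, $K$, $\ell_0$, and $\beta$ must be selected, and it is the sole nontrivial balancing act in the argument; everything else is bookkeeping about paths in the binary path structure.
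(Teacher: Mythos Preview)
Your proof is correct and follows the same overall two-piece strategy as the paper, but you streamline one step. The paper works with a single threshold: it chooses $\beta$ so that $3S(a,\beta)<\delta$, then invokes the averaging Lemma~\ref{Path average lemma} with $h=2S(a,\beta)$ to show that among the $2^k$ initial segments up to level $kM$, at least $2^{k-1}$ have $\tau$-length at most $2S(a,\beta)$; it then pigeonholes the at most $a^p/\beta^p<2^{k-1}-1$ large edges against those $2^{k-1}$ subtrees to find one that is both cheaply reached and uniformly $\leq\beta$, and finishes with Lemma~\ref{First Path Lemma}. You instead decouple the two thresholds: a separate small $\beta$ controls the initial segment by the crude bound $KM\beta$, while an independent $\epsilon$ governs the pigeonhole on the $2^K$ disjoint subtrees. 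This lets you bypass Lemma~\ref{Path average lemma} entirely, since every initial segment is already short, and you need only locate one $\epsilon$-clean subtree rather than intersect ``short initial path'' with ``clean subtree''. The cost is the slightly awkward bookkeeping needed to exhibit $\ell_0$ as a function of $(a,\beta,M,p)$ after the fact, whereas in the paper's single-parameter setup $\ell_0=kM$ visibly depends only on $\beta$ through $2^{k-1}>a^p/\beta^p+1$.
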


Here we use the level $\ell$ to mean distance from $v$, as before.  Intuitively the $\beta$ bound forces the initial portions of ascending paths to have small $\tau$-length while the bound on $\norm{\tau}_{p,\infty}$ together with the exponential growth of $T_v$ forces there to be many paths with ending portions with small $\tau$-length.

\begin{proof}[Proof of Lemma \ref{Last Path Lemma}]
Let $T_v$, $M$, $p$, and $\tau$ be as above.  Let $\beta$ be such that $3S(a,\beta) < \delta$.   Let $h = 2S(a,\beta)$ in Lemma \ref{Path average lemma} and choose $k$ such that $2^{k-1} > a^p / \beta^p + 1$.  If $\tau \leq \beta$ up to level $Mk$ then, from property (iv) of binary tree structures, there are $2^k$ ascending paths beginning at $v$ and ending at level $kM$ with the property that if $w, w'$ are distinct ending vertices, then $2B_{w} \cap 2B_{w'} = \emptyset$.  These paths have average path length bounded above by $S(a,\beta)$.  Associate to each of these paths the subtree of ascending paths beginning at the highest level vertex.  It follows that these subtrees are disjoint.  By Lemma \ref{Path average lemma}, at least $2^k (1-S(a,\beta)/2S(a,\beta)) = 2^{k - 1}$ of these paths have length bounded above by $2S(a,\beta)$.  As $|\{e : \tau(e) > \beta\}| \leq a^p / \beta^p$ it follows that the restriction of $\tau$ to at least one of these subtrees is bounded by $\beta$.  On this subtree we apply Lemma \ref{First Path Lemma} to conclude there is an ascending path with $\tau$-length bounded above by $S(a,\beta)$.  Concatenate these paths and set $\ell_0 = kM$. 
\end{proof}

\begin{remark}
In Lemma \ref{Last Path Lemma} it follows that, for fixed $a$, as $\delta \to 0$ we have $\beta \to 0$ and $\ell_0 \to \infty$
\end{remark}

\begin{remark}\label{vertex remark}
Lemmas \ref{First Path Lemma} and \ref{Last Path Lemma} (up to a multiplicative factor) hold for ascending vertex paths by Lemma \ref{vertex edge comp}.
\end{remark}


\subsection{Structures in the hyperbolic filling}
With binary path structures and path lemmas in hand, we continue to work towards the implication (ii) $\implies$ (iii) in Theorem \ref{varphi equivalence}.  To prove the result we will use the notion of the hull of a set in a hyperbolic filling $X$.   Recall we use the variable $s > 1$ for the parameter of the hyperbolic filling $X$.

\begin{defn}\label{hull def}
Let $A \subseteq Z$.  Define $j_A \in \N$ by 
\begin{equation*}
-\log_s(\diam(A)) < j_A \leq -\log_s(\diam(A)) + 1
\end{equation*}
so $\diam(A)/s \leq s^{-j_A} < \diam(A)$.  The {\em hull} of $A$ in $X$ is defined as 
\begin{equation*}
H(A) = H_A = \{v \in V_X : \ell(v) \geq j_A \text{ and } B_v \cap A \neq \emptyset\}.
\end{equation*}
\end{defn}

\begin{lemma}\label{ring thickness lemma}
Let $(Z,d)$ be a compact, connected, Ahlfors regular metric space and $X$ a hyperbolic filling of $Z$.  Let $K > 0$.  There are constants $c_0(K) > 0$ and $R_{max}(Z,X)$ such that if $0 < r_1 < r_2 < R_{max}$ satisfy $r_2 / r_1 > c_0$, then for any $z \in Z$ we have 
\begin{equation*}
\rho(H(B(z, r_1)), X \setminus H(B(z, r_2))) \geq K.
\end{equation*}
where $\rho$ denotes distance in $X$. 
\end{lemma}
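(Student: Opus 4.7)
The plan is to fix a shortest edge path $v_0, v_1, \ldots, v_n$ in $X$ with $v_0 \in H(B(z, r_1))$ and $v_n \in X \setminus H(B(z, r_2))$ and to show $n \geq \log_s(r_2/r_1) - C$ for a constant $C$ depending only on $s$ and the Ahlfors regularity data. Once this is established, taking $c_0(K) := s^{K+C} \vee 10$ forces $n \geq K$ whenever $r_2/r_1 > c_0(K)$, and this gives the desired distance bound.

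A preliminary estimate I would establish first is that for $r$ below a suitable threshold, $\diam B(z,r) \simeq r$. Since $Z$ is compact and connected, there exists $y \in Z$ with $d(z,y) \geq \diam(Z)/2$, and by connectedness the continuous image $d(z,\cdot)(Z)$ is an interval containing $0$ and $d(z,y)$. Hence for $r \leq R_{\max} := \diam(Z)/2$ every distance in $[0,r)$ is realized inside $B(z,r)$, giving $r \leq \diam B(z,r) \leq 2r$. Consequently $j_{B(z,r)} = -\log_s(r) + O(1)$ with the additive constant depending only on $s$, and therefore
\begin{equation*}
j_{B(z,r_1)} - j_{B(z,r_2)} = \log_s(r_2/r_1) + O(1).
\end{equation*}

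Now I would analyze a shortest path $v_0, v_1, \ldots, v_n$ as above. By the definition of the hull, either \emph{(A)} $\ell(v_n) < j_{B(z,r_2)}$, or \emph{(B)} $B_{v_n} \cap B(z,r_2) = \emptyset$. In case (A), adjacent vertices in $X$ satisfy $|\ell(v_i) - \ell(v_{i+1})| \leq 1$, so $n \geq \ell(v_0) - \ell(v_n) \geq j_{B(z,r_1)} - j_{B(z,r_2)} = \log_s(r_2/r_1) + O(1)$ and we are done. Case (B) is the substantive one. Let $\ell_{\min} := \min_i \ell(v_i)$ and write $c_i$ for the center of $B_{v_i}$. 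From $v_0 \in H(B(z,r_1))$ and the preliminary estimate we have $s^{-\ell(v_0)} \leq 2r_1$, so $d(c_0,z) < r_1 + 2s^{-\ell(v_0)} \leq 5r_1$; disjointness in (B) gives $d(c_n,z) \geq r_2 + 2s^{-\ell(v_n)} \geq r_2$. With $c_0(K) \geq 10$, these combine to $d(c_0,c_n) \geq r_2/2$. The edge condition $B_{v_i} \cap B_{v_{i+1}} \neq \emptyset$ yields $d(c_i,c_{i+1}) \leq 2s^{-\ell(v_i)} + 2s^{-\ell(v_{i+1})} \leq 4 s^{-\ell_{\min}}$, so the triangle inequality forces $n \geq r_2 s^{\ell_{\min}}/8$. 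Simultaneously, $n \geq \ell(v_0) - \ell_{\min} \geq j_{B(z,r_1)} - \ell_{\min}$, and adding,
\begin{equation*}
2n \geq \bigl(j_{B(z,r_1)} - \ell_{\min}\bigr) + \frac{r_2\, s^{\ell_{\min}}}{8}.
\end{equation*}
Minimizing the right-hand side over $\ell_{\min}$ (the two terms balance at $s^{\ell_{\min}} \sim r_2^{-1}$) recovers $n \gtrsim \log_s(r_2/r_1) + O(1)$, with constants depending only on $s$.

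Combining the two cases yields $n \geq \log_s(r_2/r_1) - C$ with $C = C(s,Z)$, and the choice of $c_0(K)$ completes the proof. The conceptual crux, and the place where care is needed, is the balance exposed in case (B): a path attempting to avoid the level-descent cost by staying high and moving horizontally is forced to span the gap $\sim r_2$ with balls of radius $\sim s^{-\ell_{\min}}$, requiring exponentially many steps in $\ell_{\min}$; the optimization shows that the descent strategy and the horizontal strategy are equally expensive, and together produce the logarithmic lower bound.
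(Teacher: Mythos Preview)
Your proof is correct and follows essentially the same two-case split as the paper: either the endpoint $v_n$ fails the level condition $\ell(v_n) \geq j_{B(z,r_2)}$, or its ball misses $B(z,r_2)$. Case (A) is handled identically. In case (B) the paper is slightly more direct: rather than introducing $\ell_{\min}$, bounding each step by $4s^{-\ell_{\min}}$, and then optimizing the sum of two lower bounds, the paper uses $\ell(v_k) \geq j_{B(z,r_1)} - k$ to bound the total displacement by the geometric sum $\sum_{k=1}^{n} 4s^{-j_1 + k} \lesssim r_1 s^{n}$, immediately yielding $r_2/r_1 \lesssim s^{n}$. Your balance argument recovers only $n \geq \tfrac{1}{2}\log_s(r_2/r_1) - C$ (not the full $\log_s(r_2/r_1) - C$ you announced at the outset), but since the lemma only requires some $c_0(K)$ this factor of $2$ is harmless; just adjust $c_0(K)$ accordingly.
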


\sloppypar
\begin{proof}
We choose $R_{max}$ such that for any $z \in Z$ the set $X \setminus H(B(z,r_2))$ is nonempty. Fix $z \in Z$ and let $H_i = H(B(z,r_i))$ for $i \in \{1,2\}$, so we wish to show ${\rho(H_1, X \setminus H_2) \geq K}$.  Consider a chain of neighboring vertices $v_0, v_1, v_2, \dots, v_m$ such that $v_0 \in H_1$ and $v_m \notin H_2$.  From the definition of the hulls, we have integers $j_i = j_{B(z,r_i)}$.  There are two cases, either (i) $\ell(v_m) < j_2$ or (ii) $\ell(v_m) \geq j_2$ and $B_{v_m} \cap B(z,r_2) = \emptyset$.

\fussy
Assume (i).  We have $\ell(v_0) \geq j_1$ and so $m \geq j_1 - j_2$.  We have 
\begin{equation*}
j_1 - j_2 \gtrsim \log_s(\diam(B(z,r_2)) / \diam(B(z,r_1)) \gtrsim \log_s(c r_2 / r_1)
\end{equation*}
for a constant $c>0$ as Ahlfors regular spaces are uniformly perfect.  Thus, if $r_2 / r_1$ is large then so is $m$.

Now, assume (ii).  View the vertices $v_k$ as the centers of their respective balls in $Z$.  Then, by the triangle inequality we have
\begin{equation*}
d(v_0, v_m) \geq d(z,v_m) - d(z, v_0) \geq r_2 - r_1 - r(B_{v_0}).
\end{equation*}
We see 
\begin{equation*}
r(B_{v_0}) \leq 2s^{-j_1} < 2 \diam(B(z,r_1)) \leq 4 r_1
\end{equation*}
and so 
\begin{equation*}
d(v_0, v_m) \geq r_2 - 5r_1.
\end{equation*}
We also have
\begin{equation*}
d(v_k, v_{k+1}) \leq 2s^{-\ell(v_k)} + 2s^{-\ell(v_{k}) + 1}.
\end{equation*}
Hence,
\begin{equation*}
d(v_0, v_m) \leq \sum_{k=1}^{m} 4s^{-j_1 + k} = 4s^{-j_1} \frac{s(s^m - 1)}{s-1} \lesssim r_1 s^{m+1}.
\end{equation*}
Combining these two inequalities, we have
\begin{equation*}
\frac{r_2}{r_1} - 5 \lesssim s^m
\end{equation*}
and so if $r_2 / r_1$ is large then so is $m$.
\end{proof}

We record the following observation that paths connecting the separated regions $H_1$ and $X \setminus H_2$ as in Lemma \ref{ring thickness lemma} must at some point be far from both sets.

\begin{lemma}\label{ring sep lemma}
Let $(Z,d)$ be a compact, connected, Ahlfors regular metric space and $X$ a hyperbolic filling of $Z$.  Suppose $0<r_1<r_2<R_{max}$ and let $z \in Z$.  Set $H_i = H_{B(z,r_i)}$.  Let $L \in \N$ and suppose $\rho(H_1, X \setminus H_2) \geq 2L+1$.  Define $R = H_2 \setminus H_1$.  Let $\gamma$ be an edge path in $X$ such that there are edges $e_1,e_2 \in \gamma$ with $e_1 \in H_1$ and $e_2 \notin H_2$.  Then, there is an edge $e \in E_X$ with $e \in H_2 \setminus H_1$ such that $\rho(e,H_1) \geq L$ and $\rho(e, X \setminus H_2) \geq L$.
\end{lemma}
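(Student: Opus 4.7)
The plan is to reduce to a vertex walk and apply a discrete intermediate value argument. Write the edge path $\gamma$ as an alternating sequence of vertices and edges. Since $e_1\in H_1$ and $e_2\notin H_2$, $\gamma$ has a vertex in $H_1$ and a vertex in $X\setminus H_2$. Passing to a finite subpath and relabeling indices, I may assume the vertex path has the form $v_0,v_1,\dots,v_n$ with $v_0\in H_1$ and $v_n\in X\setminus H_2$. The graph distance $|v_0-v_n|\leq n$, so $n\geq 2L+1$ by the separation hypothesis.

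Now define the two auxiliary functions $f(k)=\rho(v_k,H_1)$ and $g(k)=\rho(v_k,X\setminus H_2)$ for $0\leq k\leq n$. Since adjacent vertices are at graph distance $1$, both $f$ and $g$ are $1$-Lipschitz integer-valued sequences. The key inequality
\[
f(k)+g(k)\geq 2L+1\qquad\text{for all }k\in\{0,\dots,n\}
\]
follows from the triangle inequality applied to a vertex $u\in H_1$ realizing $f(k)$ and $w\in X\setminus H_2$ realizing $g(k)$, together with the hypothesis $\rho(H_1,X\setminus H_2)\geq 2L+1$.

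With this inequality in hand, note $f(0)=0$ and $f(n)\geq 2L+1$, so by the discrete intermediate value theorem for $1$-Lipschitz integer sequences, there is a smallest index $k_0\in\{1,\dots,n\}$ with $f(k_0)=L+1$. Minimality and the Lipschitz bound force $f(k_0-1)=L$. Combining with $f+g\geq 2L+1$ gives $g(k_0)\geq L$ and $g(k_0-1)\geq L+1$. Let $e$ be the edge of $\gamma$ joining $v_{k_0-1}$ and $v_{k_0}$. Since the distance from a point on $e$ to a vertex $w$ of $X$ is realized through one of the two endpoints, I get
\[
\rho(e,H_1)=\min(f(k_0-1),f(k_0))=L,\qquad \rho(e,X\setminus H_2)\geq \min(g(k_0-1),g(k_0))\geq L.
\]
Finally both $f$ and $g$ are $\geq 1$ at $v_{k_0-1}$ and $v_{k_0}$, so both endpoints lie in $H_2$ and outside $H_1$, giving $e\in H_2\setminus H_1$.

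There is no real obstacle here; the only subtle point is recognizing that the additive bound $f+g\geq 2L+1$ at \emph{every} intermediate vertex (not just in the ``middle'' of the path) is what allows one to select $k_0$ and then automatically control $g(k_0-1)$ from the left. The rest is bookkeeping about distances from a point on an edge to a vertex of $X$.
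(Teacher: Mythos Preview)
Your argument is correct and is essentially a careful elaboration of the paper's own proof, which consists of a single sentence (``This follows immediately as the vertices of a given edge are at distance $1$ from one another; tracing $\gamma$ from $e_1$ to $e_2$ we must encounter such an edge''). Your use of the $1$-Lipschitz functions $f,g$ together with the additive bound $f+g\geq 2L+1$ makes the ``tracing'' step explicit and rigorous, but the underlying idea is identical.
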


\begin{proof}
This follows immediately as the vertices of a given edge are at distance $1$ from one another; tracing $\gamma$ from $e_1$ to $e_2$ we must encounter such an edge.
\end{proof}

If $\gamma, H_1, H_2$ and $R$ are defined as above, we say $\gamma$ {\em travels through} $R$.

\begin{lemma}\label{NK lemma}
Let $(Z,d)$ be a compact, connected, Ahlfors regular metric space and $X$ a hyperbolic filling of $Z$.  Let $N, K \in \N$ and $c_1 > 0$.  Then, there exists $c_2 = c_2(K,N)$ such that if $A, B \subseteq Z$ satisfy $\diam(B) \leq \diam(A)$ and $\Delta(A,B) \geq c_2$ then, given $b \in B$, we can construct a set of $N+1$ concentric balls $B_k = B(b,r_k)$ with $r_0 = \diam(B)$ such that $A \cap B_N = \emptyset$, the radii satisfy $r_{k+1} / r_k \geq c_0$, and for each $0 \leq k < N$ we have $\Delta(10 B_k, Z \setminus B_{k+1}) \geq c_1$.
\end{lemma}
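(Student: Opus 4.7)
The plan is to take the $r_k$ as a geometric sequence $r_k = C^k \diam(B)$ with a large enough common ratio $C = C(K, c_1)$ and then choose $c_2$ comparable to $C^N$. This makes the verification a matter of one-step estimates repeated along the chain of balls.

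For the ratio condition, I would simply demand $C \geq c_0(K)$, where $c_0$ is the constant from Lemma \ref{ring thickness lemma}. For the relative distance estimate, the triangle inequality gives, for $x \in 10 B_k$ and $y \in Z \setminus B_{k+1}$,
\begin{equation*}
d(x,y) \geq d(b,y) - d(b,x) > r_{k+1} - 10 r_k = (C - 10) r_k
\end{equation*}
as soon as $C > 10$, so $\dist(10 B_k, Z \setminus B_{k+1}) \geq (C-10) r_k$. Since $\diam(10 B_k) \leq 20 r_k$, and since $\min(a,b) \leq a$ for any nonnegative $a,b$, we have
\begin{equation*}
\min(\diam(10 B_k), \diam(Z \setminus B_{k+1})) \leq \diam(10 B_k) \leq 20 r_k,
\end{equation*}
hence $\Delta(10 B_k, Z \setminus B_{k+1}) \geq (C-10)/20$. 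Taking $C = \max(c_0(K), \, 10 + 20 c_1)$ therefore handles both conditions simultaneously.

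To close the argument I would set $c_2 = 2 C^N$. The hypothesis $\Delta(A,B) \geq c_2$ together with $\diam(B) \leq \diam(A)$ yields $\dist(A,B) \geq 2 C^N \diam(B) = 2 r_N$, and consequently for any $b \in B$ we get $\dist(b, A) \geq \dist(A, B) > r_N$, so $A \cap B_N = \emptyset$. This also gives $r_N \leq \dist(A,B)/2 \leq \diam(Z)/2$, so in particular $Z \setminus B_{k+1} \supseteq A$ is nonempty for every $0 \leq k \leq N-1$, and all the relative distances above are well defined. The only point that needs a moment's care is handling the $\min$ of diameters in the denominator of $\Delta(10 B_k, Z \setminus B_{k+1})$; the key observation is that one can always trade this $\min$ for the easy upper bound $\diam(10 B_k) \leq 20 r_k$, so no estimate on $\diam(Z \setminus B_{k+1})$ is required. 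Beyond that, the proof is a direct construction and verification, with no real obstacle.
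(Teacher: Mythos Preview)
Your proposal is correct and is essentially the paper's own argument: choose the radii as a geometric sequence with ratio $C \geq \max(c_0(K),\,10+20c_1)$, use the triangle inequality and $\diam(10B_k)\leq 20r_k$ to bound $\Delta(10B_k,\,Z\setminus B_{k+1})$ from below, and take $c_2$ of order $C^N$ to force $A\cap B_N=\emptyset$. The only additional wrinkle in the paper's version is that it also arranges $r_N < R_{\max}$ (the threshold from Lemma~\ref{ring thickness lemma}), which is not part of the stated conclusion but is needed when the lemma is applied later; your bound $r_N \leq \diam(Z)/2$ is in the same spirit and could be sharpened to $r_N < R_{\max}$ simply by enlarging $c_2$.
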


Here $c_0 = c_0(K)$ is from Lemma \ref{ring thickness lemma}.

\sloppypar
\begin{proof}
In what follows we note we may assume $r_{k+1}$ is small enough such that ${Z \setminus B(b,r_{k+1}) \neq \emptyset}$. We see that if $r_{k+1} > 10 r_k$ then
\begin{equation*} 
\Delta(10B(b,r_k), Z \setminus B(b,r_{k+1})) \geq \frac{r_{k+1} - 10 r_k}{20r_k} = \frac{r_{k+1}}{20 r_k} - \frac{1}{2}.
\end{equation*}
Thus, to satisfy the relative distance condition and the $c_0(K)$ condition it suffices to choose the successive ratios $r_{k+1}/r_k$ to be large enough, say $r_{k+1}/r_k \geq c_3(K)$.  We choose $r_0 = \diam(B)$ and $r_{k+1} = c_3 r_{k}$.  As $\diam(Z) \leq 1$ and $\diam(B) \leq \diam(A)$, we see $\diam(B) \leq 1/\Delta(A,B)$.  Thus, for fixed $N$, there is a constant $c(N)$ such that if $c_2 > c(N)$, then $r_N < R_{max}$ (with $R_{max}$ from Lemma \ref{ring thickness lemma}).  If $B_N \cap A \neq \emptyset$ then $\dist(A,B) \leq r_N = c_3^N \diam(B)$ and so 
\begin{equation*}
\Delta(A,B) = \dist(A,B) / \diam(B) \leq c_3^N.
\end{equation*}
Thus $c_2 = (c_3^N \vee c(N)) + 1$ satisfies the claim.
\end{proof}

\fussy
\begin{lemma} \label{tree lemma}
Let $(Z,d)$ be a compact, connected, Ahlfors regular metric space and $X$ a hyperbolic filling of $Z$.  Write $X = (V_X, E_X)$ and let $s$ be the filling parameter of $X$.

{\normalfont (i)} Let $z \in Z, r>0$, and set $B = B(z,r)$.  Suppose $v \in V_X$ satisfies $v \in H(B)$.  Then, there is a binary path structure $T_v$ with splitting constant $M$ depending only on $(Z,d,\mu)$ and $s$, originating at $v$, such that all ascending paths have boundary limits in $10B$.  

{\normalfont (ii)}  There exists $R_{max}' > 0$ with the following property.  Let $z \in Z$ and let $B = B(z,r)$ and $B' = B(z,t)$ with $0 < r < t < R_{max}'$.  Suppose $v \in V_X$ satisfies $v \in H(B') \setminus H(B)$.  Then there is a binary path structure $T_v$ with splitting constant $M'$ depending only on $(Z,d,\mu)$ and $s$, originating at $v$, such that all ascending paths (in this case geodesic paths in $T_v$ originating at $v$) have boundary limits in $Z \setminus \overline{B}$.
\end{lemma}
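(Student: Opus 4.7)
The plan for (i) is to iterate the Vertex Splitting Lemma (Lemma \ref{Vertex splitting lemma}) starting from $v$. Apply it to $v$ to get two children $v_1, v_2$ at level $\ell(v)+M$ with $B_{v_i} \subseteq B_v$ and $2B_{v_1} \cap 2B_{v_2} = \emptyset$; apply it again to each child, and so on, defining the vertex sets $T_v^{kM}$ inductively. For the connecting paths $\gamma_{w_i}$ required by property (iv) of the binary path structure, select intermediate vertices at each level strictly between $\ell(w)$ and $\ell(w_i)$ whose balls nest around $B_{w_i}$; such chains exist by the standard maximal-separated-set construction in the hyperbolic filling and automatically satisfy $B_x \subseteq 2B_w$. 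For the boundary limits: any ascending path visits vertices $w^{(k)}$ with nested balls $B_{w^{(k)}} \subseteq B_v$, so its limit lies in $\overline{B_v}$. Since $v \in H(B)$ gives $d(y_v, z) < r + r(B_v)$ by picking a point in $B_v \cap B$, and since $\ell(v) \geq j_B$ combined with $s^{-j_B} < \diam(B) \leq 2r$ gives $r(B_v) < 4r$, we obtain $\overline{B_v} \subseteq \overline{B}(z, r + 2r(B_v)) \subseteq \overline{B}(z, 9r) \subseteq 10B$.

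For (ii), the subtlety is that we need the limits to avoid the closed set $\overline{B}$, not merely some open neighborhood. The plan is to perform the first split in a modified way so that both children $v_1, v_2$ have their balls at distance at least $\epsilon r(B_v)$ from $\overline{B}$, for a uniform $\epsilon > 0$ depending only on $(Z,d,\mu)$ and $s$. Once this is achieved, all subsequent splits use Lemma \ref{Vertex splitting lemma} exactly as in part (i), since every descendant ball $B_{w^{(k)}}$ for $k \geq 1$ lies inside $B_{v_i}$ and therefore inherits distance $\geq \epsilon r(B_v)$ from $\overline{B}$. The limit of any ascending path, being the intersection of the closures of such nested balls, lies at distance $\geq \epsilon r(B_v) > 0$ from $\overline{B}$, hence in $Z \setminus \overline{B}$.

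The initial modified split is carried out by first locating an anchor ball $B^* \subseteq B_v$ of radius $\asymp r(B_v)$ with $\dist(B^*, \overline{B}) \gtrsim r(B_v)$, and then invoking Lemma \ref{Vertex splitting lemma} inside $B^*$ to produce $v_1, v_2$. The existence of $B^*$ is a case analysis driven by $v \notin H(B)$: either $\ell(v) < j_B$, in which case $r(B_v)$ is comparable to or larger than $r$ and the much smaller set $B$ cannot fill $B_v$; or else $B_v \cap B = \emptyset$, in which case $B_v \cap \overline{B} = \emptyset$ too (since $B_v$ is open and meets no sequence converging into $\overline{B}$). In both cases, choosing $R_{max}'$ sufficiently small ensures $Z \setminus \overline{B}$ is not consumed by $B_v$, and connectedness of $Z$ together with Ahlfors regularity lets one extract a sub-ball of $B_v$ on the side away from $z$ at the required distance from $\overline{B}$. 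I expect the main obstacle to be producing the uniform constant $\epsilon$ in this anchor construction in the sub-case $B_v \cap B = \emptyset$, where $B_v$ may nearly abut $\overline{B}$ along part of its boundary; the argument must extract a uniform-sized piece of $B_v$ bounded away from $\overline{B}$ using only the ambient parameters, which is what drives the constant $M'$ to depend only on $(Z,d,\mu)$ and $s$.
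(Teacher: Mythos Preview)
Your treatment of part (i) matches the paper's exactly: repeated application of Lemma~\ref{Vertex splitting lemma} together with the containment $B_v \subseteq 10B$, which you verify via $r(B_v) < 4r$.

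For part (ii) your case analysis is right, but you have the difficulty of the two sub-cases inverted, and this creates a genuine gap. In the sub-case $B_v \cap B = \emptyset$ you correctly observe $B_v \cap \overline{B} = \emptyset$; but then the center $v$ itself satisfies $d(v,\overline{B}) \geq r(B_v)$, so the anchor $B^* = B(v, r(B_v)/2)$ already has $\dist(B^*,\overline{B}) \geq r(B_v)/2$. This case is easy, and the paper dispatches it by simply descending one level to a child $w$ containing $v$ as center, for which $\overline{B_w} \cap \overline{B} = \emptyset$ automatically.

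The sub-case $\ell(v) < j_B$ with $B_v \cap B \neq \emptyset$ is where your plan breaks. You want an anchor $B^* \subseteq B_v$ with $\dist(B^*,\overline{B}) \gtrsim r(B_v)$, arguing that $\overline{B}$ is ``much smaller'' than $B_v$. Quantitatively one only has $\diam(\overline{B}) \leq r(B_v)/2$, so the $\epsilon r(B_v)$-neighborhood of $\overline{B}$ sits in a ball of radius $(1/2+\epsilon)r(B_v)$; comparing its measure to $\mu(B_v)$ via Ahlfors regularity requires $(1/2+\epsilon)^Q < c_Q/C_Q$, which need not hold. Neither uniform perfectness nor connectedness rescues this without further work, so insisting on $B^* \subseteq B_v$ does not produce a uniform $\epsilon$. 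The paper's remedy is to \emph{leave} $B_v$: ascend to an ancestor $x$ at level $\ell(v)-j$ (with $j$ depending only on the data) and use Ahlfors regularity on the much larger ball $B_x$ to find a point $z' \in B_x \setminus (4+s)B_v$, hence a vertex $w$ at level $\ell(v)$ with $z' \in B_w$. Since $\ell(v) < j_B$ forces $\overline{B} \subseteq (1+s)B_v$, one gets $\overline{B_w} \cap \overline{B} = \emptyset$. The resulting binary path structure is rooted at $v$ but begins with a bounded-length detour $v \to x \to w$; this detour is what makes $M'$ larger than $M$.
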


\begin{proof}
We first show (i).  As $v \in H(B)$ we see $B_v \cap B \neq \emptyset$ and $\diam(B_v) \leq 8r$.  Thus, $B_v \subseteq 10B$.  The result then follows from repeatedly applying Lemma \ref{Vertex splitting lemma} to $B_v$.

We now show (ii).  As $v \notin H(B)$, either $B_v \cap B = \emptyset$ or $B_v \cap B \neq \emptyset$ and $\ell(v) < j_B$.  

If $B_v \cap B = \emptyset$ we form our structure as in (i).  To guarantee the limits lie in $Z \setminus \overline{B}$ we first pass to a vertex $w \in V_X$ with $\ell(w) = \ell(v) + 1$ and $v \in B_w$.  This guarantees $\overline{B_w} \cap \overline{B} = \emptyset$, so we may apply this construction. 

Suppose $B_v \cap B \neq \emptyset$ and $\ell(v) < j_B$.  Consider a parent $x$ of $v$ on level $\ell(v) - j$, where we define $R_{max}'$ such that $\ell(v) - j > 0$.  The maximal $j$ required will only depend on $(Z,d,\mu)$ and $s$ below, so $R_{max}'$ can be defined this way.  We have $r(B_x) = r(B_v) s^{j}$. Let $c_Q$ and $C_Q$ denote the Ahlfors regularity constants of $Z$, so for every ball $B_r$ of radius $r < \diam(Z)$ we have $c_Q r^Q \leq \mu(B_r) \leq C_Q r^Q$.  Then, 
\begin{equation*}
\mu(B_x \setminus (4+s) B_v) \geq c_Q r(B_v)^Q s^{jQ} - C_Q ((4+s)r(B_v))^Q
\end{equation*}
which is positive for large enough $j$ depending only on $(Z,d,\mu)$ and $s$.  Now, choose a point
$z' \in B_x \setminus (4+s) B_v$.  There is a vertex $w$ with level $\ell(w) = \ell(v)$ such that $z' \in B_w$.  We claim $\overline{B_w} \cap \overline{B} = \emptyset$.  As $\ell(v) < j_B$, we have 
\begin{equation*}
2\diam(B)/s \leq 2s^{-j_B} \leq 2s^{-\ell(v)} = r(B_v).
\end{equation*}
Thus, as every point $b \in B$ satisfies $d(b,v) \leq r(B_v)+\diam(B)$, we have $\overline{B} \subseteq (1+s)B_v$.  We see $d(v,z') \geq (4+s) r(B_v)$ and so if $w' \in B_w$ we have
\begin{equation*}
\begin{split}
d(w', v) &\geq d(v, z') - d(z', w') \geq (4+s) r(B_v) - 2 r(B_v) = (2+s)r(B_v).
\end{split}
\end{equation*}
Hence, $\dist(w', (1+s)B_v) \geq r(B_v)$.  Thus, $\overline{B_w} \cap \overline{B} = \emptyset$.  The result now follows from applying Lemma \ref{Vertex splitting lemma} to $B_w$.  The key here is that there is a controlled number of steps to get from $B_v$ to $B_w$ going through $B_x$ which impacts the resulting $M'$.
\end{proof}

\begin{remark}\label{tree remark}
By choosing the maximum of the values $M$ and $M'$ above, we may assume there is a uniform $M$ that works in both situations.
\end{remark}

\subsection{Proof of the last implication in Theorem \ref{varphi equivalence}}

\begin{proof}[Proof of {\normalfont (ii) $\implies$ (iii)} in Theorem \ref{varphi equivalence}]
Let $\epsilon > 0$ and $p > 1$.  We will show if $\Delta(A,B)$ is large enough, then $\pcap(A,B) < \epsilon$.  We assume $\diam(B) \leq \diam(A)$ and $\varphi_p(1) < \infty$.  Choose $\beta$ from Lemma \ref{Last Path Lemma} with $\delta = 1/4$, $a = \varphi_p(1)$ and $M$ from Remark \ref{tree remark}.  Let $K = 2\ell_0(\beta) + 3$ where $\ell_0(\beta)$ is from Lemma \ref{Last Path Lemma}.  Let $n_1, n_2 \in \N$ be such that $2^p \varphi_p(1) / n_1^{p-1} < \epsilon$ and $n_2 > 2((\varphi_p(1)^2 / \beta^2) + 2)$.  Let $N = n_1 (2 n_2 + 2)$.

Let $\Delta(A,B) > c_2$ from Lemma \ref{NK lemma} with $N, K$ defined above and $c_1 = 1$.  Fix $b \in B$ and construct the balls $B_k$ as in that lemma.  From this we obtain $N$ rings $H(B_{k+1}) \setminus H(B_k)$ in $X$ for $k \in \{0,\dots, N-1\}$; we partition these in increasing order into $n_1$ different packages $P_i$ of $2n_2 + 2$ rings, say package $i$ contains the rings $R_j = R_j(i) = H(B_{q_i+j}) \setminus H(B_{q_i+j-1})$ for $q_i = i(2n_2 + 2)$ and $j \in \{1,\dots,2n_2 + 2\}$.  Consider the ring $R_{q_i+n_2 + 1}$.  By construction, 
\begin{equation*}
\Delta(10 B_{q_i+n_2}, Z \setminus \overline{B_{q_i+n_2+1}}) \geq 1
\end{equation*}
and so there is an admissible $\tau$ for $\pcap(10 B_{q_i+n_2}, Z \setminus \overline{B_{q_i+n_2+1}})$ such that $\norm{\tau}_{p,\infty}^p \leq \varphi_p(1)$.  We claim $2\tau_i = 2\tau|_{P_i}$ is admissible for $\pcap(10 B_{q_i+n_2}, Z \setminus \overline{B_{q_i+n_2+1}})$.

Let $\gamma$ be an edge path in $X$ connecting $10 B_{q_i+n_2}$ and $Z \setminus \overline{B_{q_i+n_2+1}}$.  As the rings $R_j$ are disjoint, it follows from the definition of $n_2$ that there are $k \in \{1,\dots,n_2\}$ and $k' \in \{n_2 + 2, \dots, 2n_2 + 2\}$ such that for all $e \in R_{q_i+k}, R_{q_i+k'}$ we have $\tau(e) \leq \beta$.  Assume the path $\gamma$ travels through at least one of  $R_{q_i + k}$ and $R_{q_i + k'}$; otherwise it only lies in $P_i$ and so $2\tau_i$ is admissible for $\gamma$.  From any vertex in $R_{q_i + k}$ or $R_{q_i + k'}$ at distance at least $(K-1)/2$ from the boundaries of these sets we may form a binary path structure as in Lemma \ref{tree lemma} with limits in $10 B_{q_i+n_2}$ for $R_{q_i + k}$ and in  $Z \setminus \overline{B_{q_i+n_2+1}}$ for $R_{q_i + k'}$.  Such a path structure avoids $\beta$ for long enough to satisfy the conditions in Lemma \ref{Last Path Lemma} and hence contains an ascending edge path of $\tau$-length $< 1/4$.  Thus, the portion of $\gamma$ in $P_i$ must have $\tau$-length $\geq 1/2$.  Hence, $2\tau_i$ is admissible.  As $A \subseteq Z \setminus \overline{B_{q+n_2+1}}$ and $B \subseteq 10 B_{q+n_2}$, it follows that $2\tau_i$ is admissible for $\pcap(A,B)$ as well.  Note that the functions $2\tau_i$ have disjoint support.

Now, set 
\begin{equation*}
T = \frac{1}{n_1} \sum_{i=1}^{n_1} 2\tau_i.
\end{equation*}
As each $2\tau_i$ is admissible for $\pcap(A,B)$ it follows that $T$ is as well.  We estimate $\norm{T}_{p,\infty}$ recalling that the functions $\tau_i$ have disjoint support:
\begin{equation*}
\begin{split}
\#|\{e : T(e) > \lambda\}| &= \#|\{e : \exists i \text{ s.t. } 2\tau_i(e)/{n_1} > \lambda\}| \\
&= \sum_{i=1}^{n_1} \#|\{e : \tau_i(e) > \lambda n_1 / 2\}| \\
&\leq n_1 2^p \varphi_p(1) / (\lambda n_1)^p \\
&= (2^p \varphi_p(1) /n_1^{p-1}) (1/\lambda^p)
\end{split}
\end{equation*}
which, by our choice of $n_1$, is bounded by $\epsilon / \lambda^p$.
\end{proof}

\begin{remark}\label{T support}
The admissible function $T$ is supported on $H(B_N)$.  This is the hull $H(B(a, c_3^N \diam(A)))$, where $c_3$ depended on $K$.
\end{remark}

\section{Quasisymmetric uniformization}\label{Section qs unif}

In this section we prove Theorem \ref{qs unif thm}, a result analogous to \cite[Theorem 10.4]{BnK}, as well as its corollaries.

The converse direction follows from Theorem \ref{QS inv qcap} and Remark \ref{rel dist comp} as $\Stwo$ is a Loewner space; alternatively this follows from Lemma \ref{Qw2 leq ARCdim} as the conditions guarantee that $Q_w' \leq 2$.  The stronger conditions on $\varphi_2$ follow from Theorem \ref{varphi equivalence}.

Comparing this to \cite[Theorem 10.4]{BnK} we have the following observations.  The control function $\Psi(t)$ used in \cite[Theorem 10.4]{BnK} was required to tend to $0$ as $t \to \infty$.  In our setting $\varphi_2$ plays the analogous role.  Here the limiting behavior is not required a priori by Theorem \ref{varphi equivalence}.  The statement of \cite[Theorem 10.4]{BnK} is also fairly technical.  While the statement of Theorem \ref{qs unif thm} has technicalities as well, particularly in the definition of the hyperbolic filling, it has the advantage of being simpler to parse.

To prove Theorem \ref{qs unif thm}, we will use a number of results from \cite{BnK}.  For clarity and convenience, we list the full statements with some discussion in the next few pages.  We omit proofs.

\subsection*{Results from \cite{BnK} and preliminaries} 

It will be easier to work with a quasi-M\"obius condition instead of the quasisymmetric condition in what follows.  In a metric space $(Z,d)$, the {\em cross-ratio} of four distinct points $z_1, z_2, z_3, z_4 \in (Z,d)$ is given by
\begin{equation*}
[z_1, z_2, z_3, z_4] = \frac{d(z_1, z_3) d(z_2, z_4)}{d(z_1, z_4) d(z_2, z_3)}.
\end{equation*}

\begin{defn}
\begin{sloppypar}
Given an increasing function $\eta \colon (0,\infty) \to (0,\infty)$ with $\lim_{t \to 0} \eta(t) = 0$, a homeomorphism ${f \colon (Z,d) \to (Z', d')}$ is {\em $\eta$-quasi-M\"obius} if for every set of four distinct points $z_1, z_2, z_3, z_4 \in (Z,d)$ with images $f(z_i) = z_i' \in (Z', d')$ we have
\end{sloppypar}
\begin{equation*}
[z_1', z_2', z_3', z_4'] \leq \eta([z_1, z_2, z_3, z_4]).
\end{equation*}
\end{defn}

A quantity related to the cross-ratio, called the {\em modified cross-ratio}, is given by
\begin{equation*}
\langle z_1, z_2, z_3, z_4 \rangle = \frac{d(z_1, z_3) \wedge d(z_2, z_4)} {d(z_1, z_4) \wedge d(z_2, z_3)}.
\end{equation*}

By \cite[Lemma 2.3]{BnK}, the function $\eta_0(t) = 3(t \vee \sqrt{t})$ has the property that for every set of four distinct points $z_1, z_2, z_3, z_4 \in (Z,d)$ we have 
\begin{equation*}
\langle z_1, z_2, z_3, z_4 \rangle \leq \eta_0([z_1, z_2, z_3, z_4]).
\end{equation*}

\begin{remark}
By permuting $z_1, z_2, z_3, z_4$ there is similar control of the modified cross-ratio by the cross-ratio from below.
\end{remark}

On bounded spaces quasi-M\"obius maps are quasisymmetric maps (cf. \cite{Va}).  Thus, the limiting map attained from \cite[Lemma 3.1]{BnK} at the end of the proof will be our desired quasisymmetric map.

The basic strategy of the proof of Theorem \ref{qs unif thm} follows that of \cite{BnK}.  We briefly state how each of the lemmas here will be used in the proof that follows.  We leave the technical definitions to \cite{BnK}.

First, we will set up denser and denser graphs in $Z$ and $\Stwo$ which are triangulations by using the following proposition.

\begin{prop}[\cite{BnK}, Proposition 6.7]
Suppose $(Z,d)$ is a metric space homeomorphic to $\Stwo$.  If $(Z,d)$ is $C_0$-doubling and $\lambda$-LLC, then for given $0<r<\diam(Z)$ and any maximal $r$-separated set $A \subseteq  Z$ there exists an embedded graph $G = (V,E)$ which is the $1$-skeleton of a triangulation $T$ of $Z$ such that:\\
\indent \text{\textnormal{(i)}} The valence of $G$ is bounded by $K$.\\
\indent \text{\textnormal{(ii)}} The vertex set $V$ of $G$ contains $A$.\\
\indent \text{\textnormal{(iii)}} If $e \in E$ then $\diam(e) < K r$.  If $u,v \in V$ and $d(u,v) < 2r$ then $k_G(u,v) < K$.\\
\indent \text{\textnormal{(iv)}} For all balls $B(a,r) \subseteq Z$ we have $\# (B(a,r) \cap V) \leq K$.\\
The constant $K \geq 1$ depends only on $C_0$ and $\lambda$.
\end{prop}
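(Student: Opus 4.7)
The plan is to build the desired triangulation by first extracting combinatorial adjacency data from the $r$-scale cover by balls around $A$, realizing these adjacencies by disjoint arcs in $Z$ using the LLC property, and finally invoking the sphere topology to subdivide the resulting complementary regions into triangles.

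First I would exploit the maximality of $A$: the open balls $B(a,r)$ for $a \in A$ cover $Z$, and by the $C_0$-doubling property the cover $\{B(a,Cr)\}_{a \in A}$ has bounded multiplicity for each fixed $C$. This immediately gives condition (iv) provided any extra vertices added in later steps are kept $\gtrsim r$-separated, and it gives a combinatorial bound on the valence of the nerve, which will feed (i). Condition (ii) will hold by construction.

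Next I would connect nearby points of $A$ by arcs. For each unordered pair $a,b \in A$ with $d(a,b) < 2r$, LLC(i) applied to the ball $B(a,2r)$ yields a continuum $E_{ab} \subseteq B(a, 2\lambda r)$ containing $a$ and $b$; since $Z$ is a Peano sphere, one extracts from $E_{ab}$ an arc $\gamma_{ab}$ from $a$ to $b$ with $\diam(\gamma_{ab}) \lesssim r$. To make the collection $\{\gamma_{ab}\}$ pairwise disjoint off endpoints and disjoint from $V \setminus \{a,b\}$, I would use LLC(ii) to reroute each arc around the other vertices of $A$ in its vicinity, and then a general-position argument in the topological sphere to disentangle any remaining crossings. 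This produces an embedded planar graph $G_0 \subset Z$ with $V(G_0) = A$, bounded valence, and each edge of diameter $\lesssim r$; the graph-distance statement in (iii) follows because any two vertices within distance $2r$ lie in a common ball of radius $O(r)$ containing only boundedly many vertices of $A$ which are connected through $G_0$ in boundedly many steps.

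Finally, to upgrade $G_0$ to the $1$-skeleton of a triangulation, I would analyze the complementary faces of $G_0$ in $Z \cong \Stwo$. By the Jordan curve theorem each face is an open topological disk, whose boundary is a cycle of bounded combinatorial length because the valence of $G_0$ is bounded and each face lies inside a ball of radius $O(r)$ (forced by the edge-diameter bound together with LLC). Subdividing each such face by inserting boundedly many interior vertices and diagonals then produces a triangulation $T$ whose $1$-skeleton $G$ satisfies (i)--(iv), the additional vertices being chosen $\gtrsim r$-separated from each other and from $A$ to maintain (iv) by doubling. The main obstacle is the second step: pairwise LLC(i)/LLC(ii) only controls arcs one pair at a time, so producing a globally embedded graph requires inductively eliminating crossings via Schönflies, and translating the metric bounds on $G_0$ into a topological bound on face boundaries requires careful bookkeeping of how LLC(ii) interacts with edges accumulating near a common face.
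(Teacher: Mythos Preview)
The paper does not prove this proposition: it is quoted verbatim from \cite{BnK} in the preamble to Section~\ref{Section qs unif}, where the author explicitly writes ``We omit proofs.'' So there is nothing in the present paper to compare your proposal against.

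That said, your outline is broadly in the spirit of the original Bonk--Kleiner argument: one does start from the nerve of the $r$-scale cover, realize adjacencies by arcs of controlled diameter using LLC, and then subdivide complementary faces. You have correctly identified the genuine difficulty, namely that LLC only produces arcs pairwise and one must work to make the resulting collection globally embedded with quantitative control. Your sketch is honest about this being the crux, but the phrases ``general-position argument'' and ``inductively eliminating crossings via Sch\"onflies'' hide essentially all of the content; in the actual \cite{BnK} proof this step occupies several pages and requires a careful inductive construction in which arcs are built one at a time, each new arc being routed to avoid the previously constructed ones while staying in a ball of controlled radius. Similarly, the claim that each complementary face has boundary of bounded combinatorial length is not automatic from bounded valence and edge diameter alone---one needs to rule out long thin faces, which again uses LLC in a quantitative way. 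If you intend to give a self-contained proof rather than cite \cite{BnK}, these two points need substantially more detail.
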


Here $k_G(u,v)$ is the combinatorial distance between $u,v$ in the graph $G$.  Property (iii) will be used to show that we can combine these triangulations into a structure $Y$ which is quasi-isometric to our hyperbolic filling.  To use other results in \cite{BnK}, we need to know that these structures form $K$-approximations .  This is done in $Z$ by associating the objects $p(y) = y$, $r(y) = s^{-n}$, and $U_y = B(p(y), Kr(y))$ to form $p, r$, and $\mathscr{U}$ (with notation from \cite{BnK}).

\begin{cor}[\cite{BnK}, Corollary 6.8]
$(G, p, r, \mathscr{U})$ is a $K'$-approximation of $Z$, where $K'$ depends only on $\lambda$ and $C_0$.
\end{cor}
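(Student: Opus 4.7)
The plan is to verify directly that the tuple $(G, p, r, \mathscr{U})$ satisfies each of the defining axioms of a $K'$-approximation as set out in \cite{BnK}, using conclusions (i)--(iv) of Proposition 6.7 as the input. Recall that, at the scale $r$ in question, the construction is $p(y) = y$, $r(y) = r$, and $U_y = B(p(y), K r(y))$, so the cover $\mathscr{U}$ is built out of concentric balls around the vertices of the triangulation.

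First I would dispose of the bookkeeping axioms. The upper inclusion $U_y \subseteq B(p(y), K' r(y))$ is automatic from the definition $U_y = B(p(y), K r(y))$ once $K' \geq K$, and the lower inclusion $B(p(y), r(y)/K') \subseteq U_y$ is trivial for any $K' \geq 1$. The cover axiom follows from the fact that $V \supseteq A$ with $A$ a maximal $r$-separated set: any $z \in Z$ lies within distance $r$ of some $a \in A \subseteq V$, hence $z \in B(a, Kr) = U_a$. The comparability of radii at adjacent vertices is immediate because at the fixed scale $r$ all vertex radii are equal.

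Next I would verify the overlap/combinatorial-distance axiom, which is the only place the geometry of the triangulation actually enters. For the forward direction, if $y$ and $y'$ are joined by an edge $e$ of $G$, then by (iii) of Proposition 6.7 we have $\diam(e) < Kr$, so $d(y, y') < Kr$ and in particular $y' \in B(y, Kr) = U_y$, giving $U_y \cap U_{y'} \neq \emptyset$. For the converse, if $U_y \cap U_{y'} \neq \emptyset$, then $d(y, y') < 2Kr$, and the second half of (iii) furnishes a combinatorial bound $k_G(y, y') < K$; together with the valence bound (i) and the local finiteness bound (iv), this controls the multiplicity of the cover and the combinatorial-to-metric comparison required in the axioms. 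Choosing $K'$ to be a sufficiently large constant absorbing all the occurrences of $K$ (and hence depending only on $\lambda$ and $C_0$) then delivers the conclusion.

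The whole argument is essentially axiom-matching once Proposition 6.7 is in hand; no step involves serious geometry beyond what that proposition already provides. The one mildly delicate point is to ensure that a single constant $K'$ can be chosen to satisfy all axioms simultaneously, in particular that the valence bound (i) and the point-counting bound (iv) together give the uniform multiplicity needed for the cover $\mathscr{U}$. Since the paper immediately omits the proof and cites \cite{BnK}, the expected level of detail here is just this axiom-by-axiom verification rather than any deeper argument.
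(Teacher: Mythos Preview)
The paper does not give its own proof of this statement; it explicitly lists it among the results quoted from \cite{BnK} with the remark ``We omit proofs.'' Your axiom-by-axiom verification is the correct approach and is essentially what \cite{BnK} does.

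There is one small slip worth flagging. In the converse overlap direction you write that $U_y \cap U_{y'} \neq \emptyset$ gives $d(y,y') < 2Kr$, and then invoke the second half of (iii) to get $k_G(y,y') < K$. But (iii) as stated only yields $k_G(u,v) < K$ under the hypothesis $d(u,v) < 2r$, not $2Kr$. The fix is routine: since $A \subseteq V$ is a maximal $r$-separated set, every point of $Z$ lies within $r$ of some vertex, so one can interpolate a chain $y = y_0, y_1, \dots, y_m = y'$ with $d(y_j, y_{j+1}) < 2r$ and $m$ bounded in terms of $K$ and the doubling constant $C_0$; applying (iii) to each step gives $k_G(y,y') \leq mK$. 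This is presumably what you were gesturing at when you appealed to (i) and (iv) ``together,'' but the sentence as written does not quite parse without this chaining step.
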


In $\Stwo$ we use properties of normalized circle packings.  For this, a normalized circle packing $\mathscr{C}$ of $\Stwo$ realizing a triangulation $G$ has objects $p^\mathscr{C}, r^\mathscr{C}, \mathscr{U}^\mathscr{C}$ associated to it; see \cite[Section 5]{BnK}.

\begin{lemma}[\cite{BnK}, Lemma 5.1]
Suppose $G$ is combinatorially equivalent to a $1$-skeleton of a triangulation of $\Stwo$, and $\mathscr{C}$ is a normalized circle packing realizing $G$.  Then $(G, p^\mathscr{C}, r^\mathscr{C}, \mathscr{U}^\mathscr{C})$ is a $K$-approximation of $\Stwo$ with $K$ depending only on the valence of $G$.
\end{lemma}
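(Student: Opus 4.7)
The plan is to reduce every axiom of a $K$-approximation to the Ring Lemma of Rodin and Sullivan, which says that in a circle packing whose tangency graph has valence bounded by $K_0$, the radii of two tangent disks differ by at most a multiplicative constant $L = L(K_0)$. This is the only serious input from the theory of circle packings; the rest is bookkeeping.

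First I would fix notation: write $D_v$ for the spherical disk in $\mathscr{C}$ labelled by $v \in V(G)$, let $p^\mathscr{C}(v)$ be its spherical center, $r^\mathscr{C}(v)$ its spherical radius, and set $U_v^\mathscr{C} = B(p^\mathscr{C}(v), Kr^\mathscr{C}(v))$ for a sufficiently large constant $K$ to be determined. The Ring Lemma then gives that whenever $v \sim w$ in $G$, $r^\mathscr{C}(v) \simeq r^\mathscr{C}(w)$, and since tangent disks have centers separated by the sum of their radii, $d_{\Stwo}(p^\mathscr{C}(v), p^\mathscr{C}(w)) \simeq r^\mathscr{C}(v)$. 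Iterating along a $G$-path of length $n$ yields the bounds $r^\mathscr{C}(v)/r^\mathscr{C}(w) \leq L^n$ together with a comparable estimate on $d_{\Stwo}(p^\mathscr{C}(v), p^\mathscr{C}(w))$ in terms of $nL^n r^\mathscr{C}(v)$.

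With this in hand I would verify each axiom of a $K$-approximation in turn. \emph{Coverage}: the closed disks $\overline{D_v}$ together with the curvilinear triangular interstices bounded by triples of mutually tangent disks exhaust $\Stwo$; each interstice is bounded by three disks of mutually comparable radii (by the Ring Lemma), so its diameter is bounded by $L' \min_i r^\mathscr{C}(v_i)$, and hence $\bigcup_v U_v^\mathscr{C} = \Stwo$ for $K$ large enough depending only on $K_0$. \emph{Controlled overlap}: if $U_v^\mathscr{C} \cap U_w^\mathscr{C} \neq \emptyset$, then iterating the Ring Lemma along any $G$-path from $v$ to $w$ bounds the combinatorial distance by a constant depending only on $K_0$ and $K$, and the bounded valence then bounds the number of such $w$ for a given $v$. \emph{Comparability of graph-theoretic and metric distances}: immediate from iterating the Ring Lemma along shortest $G$-paths together with the triangle inequality.

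The main obstacle I anticipate is the coverage axiom: one must check that every point of $\Stwo$ not in any $\overline{D_v}$ lies in a single curvilinear interstice whose diameter is truly controlled by the radii of its three bounding disks. This relies both on the Ring Lemma applied to each such triple and on the fact that the normalized packing genuinely realizes the triangulation combinatorially on $\Stwo$ (a consequence of the Koebe--Andreev--Thurston theorem), so that the complement of the disks decomposes into curvilinear triangles rather than more complicated regions. Once this geometric fact is pinned down, all constants in the remaining arguments can be traced back to $L(K_0)$ and hence depend only on the valence, as required.
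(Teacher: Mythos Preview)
The paper does not prove this lemma at all: it is quoted verbatim from \cite{BnK} in the block of ``Results from \cite{BnK} and preliminaries,'' and the paper explicitly states ``We omit proofs.'' So there is no proof in the present paper to compare your proposal against.

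That said, your outline is the standard one and is essentially how the result is established in \cite{BnK}: the Ring Lemma of Rodin--Sullivan is the only substantive input, and the axioms of a $K$-approximation are then checked by bookkeeping. One step in your sketch is phrased loosely, though. For the controlled-overlap axiom you write that ``iterating the Ring Lemma along any $G$-path from $v$ to $w$ bounds the combinatorial distance,'' but iterating along a $G$-path only controls radii \emph{given} the combinatorial distance; it does not by itself bound that distance. The actual argument runs the other way: take a geodesic arc in $\Stwo$ from $p^\mathscr{C}(v)$ to $p^\mathscr{C}(w)$, of length $\lesssim K(r^\mathscr{C}(v)+r^\mathscr{C}(w))$, and look at the chain of disks and interstices it crosses. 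The Ring Lemma forces every disk in this chain to have radius comparable to $r^\mathscr{C}(v)$, and since the disks have disjoint interiors a straightforward area (or length) packing bound then limits how many there can be. This gives the desired bound on $k_G(v,w)$. With that correction your plan is sound.
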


We use these structures to define coarse quasi-M\"obius maps between vertex subsets.  This is done with the aid of the following Lemma.

\begin{lemma}[\cite{BnK}, Lemma 4.7]
\begin{sloppypar}
Suppose $(Z,d)$ is a connected metric space and ${((V,\sim), p, r, \mathscr{U})}$ is a $K$-approximation of $Z$.  Suppose $L \geq K$ and $W \subseteq V$ is a maximal set of combinatorially $L$-separated vertices.  Then $M = p(W) \subseteq Z$ is weakly $\lambda$-uniformly perfect with $\lambda$ depending only on $L$ and $K$.
\end{sloppypar}
\end{lemma}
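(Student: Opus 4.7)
The plan is to establish, for every $z = p(w) \in M$ and every scale $r$ with $M \not\subseteq B(z,r)$, the existence of a point $m \in M$ with $r/\lambda \leq d(z,m) \leq r$ for some $\lambda = \lambda(L,K)$. Both the upper and lower bounds will be produced by the $K$-approximation translating metric data into combinatorial data: the upper bound exploits the maximality of $W$ (points of $W$ are combinatorially $L$-dense in $V$), while the lower bound exploits the $L$-separation of $W$ (points of $W$ are combinatorially at least $L$ apart).

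For the upper bound, I would use connectivity of $Z$ together with the covering property of $\mathscr{U}$ to construct a chain of adjacent vertices $w = v_0, v_1, \dots, v_n$ in $V$ whose images travel from $z$ out to a point of $M$ lying outside $B(z,r)$. The $K$-approximation keeps radii of adjacent vertices comparable and bounds $d(p(v_i), p(v_{i+1}))$ by $K(r(v_i)+r(v_{i+1}))$, so the sequence $d(z, p(v_i))$ grows by controlled steps. I select the first index $i_0$ at which $d(z, p(v_{i_0}))$ crosses the threshold $r/(2\lambda')$ for some $\lambda' = \lambda'(L,K)$ to be fixed. By maximality of $W$, there is $w' \in W$ with combinatorial distance at most $L-1$ from $v_{i_0}$, and the $K$-approximation then yields $d(p(v_{i_0}), p(w')) \lesssim L K r(v_{i_0})$. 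Building the chain at the scale $r(v_i) \simeq r/L$ and using the triangle inequality gives $d(z, p(w')) \leq r$.

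For the lower bound, suppose $w'' \in W$ satisfies $d(z, p(w''))$ very small compared to $r$. The $K$-approximation produces a chain from $w$ to $w''$ of combinatorial length bounded by a multiple of $d(z, p(w''))/r(w) + K$; if this length is strictly less than $L$, the $L$-separation of $W$ is violated. Choosing the working scale consistently with the upper bound and tracking the constants then yields $d(z, p(w'')) \geq r/\lambda$ for a suitable $\lambda = \lambda(L,K)$.

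The main obstacle is scale-matching. The $K$-approximation couples combinatorial to metric distance only through the radius function $r(\cdot)$, so at each step the chain vertex must be chosen with $r(v)$ of the correct order of magnitude relative to the target scale $r$. Guaranteeing that such a vertex exists requires careful use of the covering and overlap properties of $\mathscr{U}$ at that scale. A secondary subtlety arises when $r$ is close to $\diam(M)$: the chain may be short and one must appeal directly to the existence of a distant point of $M$ together with the maximality of $W$ to locate an appropriate element of $W$ along the way.
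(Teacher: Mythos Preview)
The paper does not prove this lemma. It is one of several results from \cite{BnK} that the paper quotes verbatim under the heading ``Results from \cite{BnK} and preliminaries,'' prefaced by the explicit statement ``We omit proofs.'' There is therefore no proof in this paper to compare your proposal against; the argument lives entirely in the Bonk--Kleiner paper.

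As to the content of your sketch: the overall strategy is the natural one --- use connectivity of $Z$ and the covering property of $\mathscr{U}$ to build a vertex chain outward from $w$, invoke maximality of $W$ to find a nearby element $w' \in W$, and use the $K$-approximation inequalities to control $d(p(w), p(w'))$. However, the structure of your upper/lower bound split is somewhat muddled. The point $m = p(w')$ you construct in the ``upper bound'' paragraph should already come with a lower bound on $d(z, p(w'))$, since $p(v_{i_0})$ is at distance roughly $r/(2\lambda')$ from $z$ and $p(w')$ is close to $p(v_{i_0})$; your separate ``lower bound'' paragraph about an arbitrary $w'' \in W$ appears to be addressing a different issue (ruling out $w' = w$, or handling very small $r$) and does not obviously mesh with the first part. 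More seriously, the scale-matching obstacle you flag at the end is genuine and unresolved: a general $K$-approximation need not supply vertices at a prescribed scale $r(v) \simeq r/L$, and your argument as written depends on this. The actual proof in \cite{BnK} handles this through the specific axioms of a $K$-approximation (in particular the comparability of radii of adjacent vertices and the chain construction in their Lemma~4.5), which you would need to invoke more carefully.
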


We show a subsequence of these maps is actually uniformly quasi-M\"obius.  If not, then \cite[Lemma 3.3]{BnK} would fail.

\begin{lemma}[\cite{BnK}, Lemma 3.3]
Suppose $(X, d_X)$ and $(Y, d_Y)$ are metric spaces, and $f \colon X \to Y$ is a bijection.  Suppose that $X$ is weakly $\lambda$-uniformly perfect, $Y$ is $C_0$-doubling, and there exists a function $\delta_0 \colon (0,\infty) \to (0,\infty)$ such that
\begin{equation*}
[f(x_1), f(x_2), f(x_3), f(x_4)] < \delta_0(\epsilon) \implies [x_1, x_2, x_3, x_4] < \epsilon
\end{equation*}
whenever $\epsilon > 0$ and $(x_1, x_2, x_3, x_4)$ is a four-tuple of distinct points in $X$.  Then $f$ is $\eta$-quasi-M\"obius with $\eta$ depending only on $\lambda, C_0$, and $\delta_0$.
\end{lemma}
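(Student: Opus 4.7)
\emph{Plan of proof.} My plan is to leverage the one-sided control from the hypothesis---image cross-ratio small forces source cross-ratio small---and upgrade it to the full two-sided quasi-M\"obius bound using the geometric assumptions on $X$ and $Y$.

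First, I would exploit the symmetry of the cross-ratio under the permutation swapping $x_3$ and $x_4$: since $[x_1,x_2,x_4,x_3] = 1/[x_1,x_2,x_3,x_4]$, applying the hypothesis to the permuted quadruple and taking the contrapositive gives $[x_1,x_2,x_3,x_4] \leq 1/\epsilon$ implies $[f(x_1),f(x_2),f(x_3),f(x_4)] \leq 1/\delta_0(\epsilon)$. Setting $\epsilon = 1/t$, this yields a candidate control function $\eta_1(t) = 1/\delta_0(1/t)$, which provides an upper bound on the image cross-ratio whenever the source cross-ratio is at most $t$. This already handles the ``large source'' regime and establishes that $\eta$ can be chosen finite on $(0,\infty)$.

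The remaining and more delicate task is to arrange that $\eta(t) \to 0$ as $t \to 0$. I would argue this by contradiction: suppose there exist $K > 0$ and a sequence of quadruples $(x_1^n, x_2^n, x_3^n, x_4^n)$ with $[x_1^n,x_2^n,x_3^n,x_4^n] \to 0$ while $[f(x_1^n), f(x_2^n), f(x_3^n), f(x_4^n)] \geq K$. Using weak $\lambda$-uniform perfection of $X$, I would insert auxiliary points near each $x_i^n$ at controlled distances relative to whichever pairwise distance drives the vanishing of the source cross-ratio (i.e.\ whichever of $d(x_1^n,x_3^n)$ or $d(x_2^n,x_4^n)$ is the dominant small factor). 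Each augmented quadruple still has vanishing source cross-ratio, so by the hypothesis the corresponding image cross-ratio must also vanish. Translating these cross-ratio estimates back into distance inequalities in $Y$, I would produce a configuration of unboundedly many image points clustered in a small ball of $Y$ while remaining mutually separated by a fixed proportion of that ball's radius---contradicting the $C_0$-doubling property.

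The main obstacle will be in this contradiction step: choosing the right auxiliary points in $X$ so that the hypothesis extracts usable quantitative information in $Y$, and carefully tracking the combinatorics of which distance pair dominates in each new quadruple. The bookkeeping of constants under weak uniform perfection of $X$, doubling of $Y$, and the permutation symmetries of the cross-ratio is the technical heart of the argument and is precisely where all three parameters ($\lambda$, $C_0$, and $\delta_0$) enter the final $\eta$.
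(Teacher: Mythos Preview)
The paper does not contain a proof of this statement. In Section~\ref{Section qs unif} the author writes: ``To prove Theorem~\ref{qs unif thm}, we will use a number of results from \cite{BnK}. For clarity and convenience, we list the full statements with some discussion in the next few pages. We omit proofs.'' Lemma~3.3 of \cite{BnK} is one of these quoted results; it is stated for later use but never proved in this paper. Consequently there is no proof here against which your proposal can be compared.

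That said, a brief comment on your outline. Your first step is correct: the permutation identity $[x_1,x_2,x_4,x_3]=1/[x_1,x_2,x_3,x_4]$ combined with the hypothesis does yield a finite upper bound $\eta_1(t)=1/\delta_0(1/t)$ on the image cross-ratio for each fixed $t$. Your contradiction scheme for the limit $\eta(t)\to 0$ is along the right lines---the actual argument in \cite{BnK} does exploit weak uniform perfection of $X$ to manufacture auxiliary points and doubling of $Y$ to reach a packing contradiction---but as written it is only a plan, not a proof. The phrase ``choosing the right auxiliary points in $X$ so that the hypothesis extracts usable quantitative information in $Y$'' is precisely the substantive part, and you have not carried it out: one must specify which quadruples to form, verify that the resulting image cross-ratios force definite separation \emph{and} definite clustering in $Y$ simultaneously, and check that the number of such points can be made to exceed any doubling bound. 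If you want a self-contained argument you should consult the original proof in \cite{BnK}.
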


To show that the function $\delta_0$ in the statement of \cite[Lemma 3.3]{BnK} exists, we argue by contradiction.  To use the $\wcaptwo$ condition, we must extract open sets from our data.  This is done by taking small open neighborhoods of the continua given by the following lemma.

\begin{lemma}[\cite{BnK}, Lemma 2.10]
Suppose $(Z,d)$ is $\lambda$-LLC.  Then there exist functions $\delta_1, \delta_2 \colon (0,\infty) \to (0,\infty)$ depending only on $\lambda$ with the following properties.  Suppose $\epsilon > 0$ and $(z_1, z_2, z_3, z_4)$ is a four-tuple of distinct points in $Z$. \\
\indent \text{\textnormal{(i)}} If $[z_1, z_2, z_3, z_4] < \delta_1(\epsilon)$, then there exist continua $E,F \subseteq Z$ with $z_1, z_3 \in E$, $z_2, z_4 \in F$, and $\Delta(E,F) \geq 1/\epsilon$. \\
\indent \text{\textnormal{(ii)}} If there exist continua $E,F \subseteq Z$ with $z_1, z_3 \in E$, $z_2, z_4 \in F$ and $\Delta(E,F) \geq 1 / \delta_2(\epsilon)$, then $[z_1, z_2, z_3, z_4] < \epsilon$.
\end{lemma}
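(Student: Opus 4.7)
My plan is to prove the two implications separately. Part (ii) is a triangle-inequality estimate that does not use the LLC hypothesis; Part (i) uses $\lambda$-LLC to realize the four points as endpoints of an explicit pair of well-separated continua.

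For Part (ii), suppose continua $E, F$ are given with $z_1,z_3 \in E$, $z_2,z_4 \in F$, and $\Delta(E,F) \geq 1/\delta_2(\epsilon)$. Set $m = \min(\diam(E), \diam(F))$, so $\dist(E,F) \geq m/\delta_2(\epsilon)$. Without loss of generality $m = \diam(E)$, so $d(z_1,z_3) \leq m$ and the broken-path triangle inequality $z_2 \to z_3 \to z_1 \to z_4$ gives $d(z_2,z_4) \leq d(z_2,z_3) + m + d(z_1,z_4)$. Combined with $d(z_1,z_4), d(z_2,z_3) \geq \dist(E,F)$,
\begin{equation*}
[z_1,z_2,z_3,z_4] \leq \frac{m\bigl(d(z_1,z_4) + d(z_2,z_3) + m\bigr)}{d(z_1,z_4)\, d(z_2,z_3)} \leq \frac{2m}{\dist(E,F)} + \frac{m^2}{\dist(E,F)^2} \leq 2\delta_2(\epsilon) + \delta_2(\epsilon)^2.
\end{equation*}
Choosing $\delta_2(\epsilon)$ small enough that this is less than $\epsilon$ completes the direction.

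For Part (i), suppose $[z_1,z_2,z_3,z_4] < \delta_1(\epsilon)$. Swapping the pairs $(z_1,z_3)$ and $(z_2,z_4)$ preserves the cross-ratio and the form of the conclusion, so we may assume $r_1 := d(z_1,z_3) \leq d(z_2,z_4)$. The modified cross-ratio bound via $\eta_0$ recalled in the excerpt yields
\begin{equation*}
r_1 \leq \eta_0(\delta_1(\epsilon)) \cdot D, \qquad D := \min(d(z_1,z_4), d(z_2,z_3)).
\end{equation*}
By the triangle inequality, $d(z_1,z_2) \geq d(z_2,z_3) - r_1 \geq D - r_1$ and $d(z_1,z_4) \geq D \geq D - r_1$, so both $z_2$ and $z_4$ lie in $Z \setminus B(z_1, D - r_1)$. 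Applying the first LLC condition at $z_1$ with radius $2r_1$ produces a continuum $E \subseteq B(z_1, 2\lambda r_1)$ containing $z_1, z_3$, and applying the second LLC condition to $B(z_1, D - r_1)$ produces a continuum $F \subseteq Z \setminus B(z_1, (D-r_1)/\lambda)$ containing $z_2, z_4$. Then $\diam(E) \leq 4\lambda r_1$ and
\begin{equation*}
\dist(E,F) \geq \frac{D - r_1}{\lambda} - 2\lambda r_1 = \frac{D - (1+2\lambda^2) r_1}{\lambda}.
\end{equation*}
For $\delta_1(\epsilon)$ small enough that $\eta_0(\delta_1) \leq 1/(2(1+2\lambda^2))$ we obtain $\dist(E,F) \geq D/(2\lambda)$; combined with $\min(\diam E, \diam F) \leq \diam(E) \leq 4\lambda r_1$,
\begin{equation*}
\Delta(E,F) \geq \frac{D/(2\lambda)}{4\lambda r_1} = \frac{D}{8\lambda^2 r_1} \geq \frac{1}{8\lambda^2 \eta_0(\delta_1(\epsilon))}.
\end{equation*}
Choosing $\delta_1(\epsilon)$ so that $8\lambda^2 \eta_0(\delta_1(\epsilon)) \leq \epsilon$ (possible since $\eta_0(t) = 3(t \vee \sqrt{t}) \to 0$ as $t \to 0$) forces $\Delta(E,F) \geq 1/\epsilon$, as required.

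The main technical obstacle is the bookkeeping in Part (i): the choice of $\delta_1(\epsilon)$ must simultaneously guarantee disjointness of $E$ and $F$ (the $(1+2\lambda^2)r_1 < D$ condition), validity of the second LLC application because $z_2,z_4$ both sit outside the deleted ball (handled by the lower bound $D - r_1$), and the final ratio estimate $D/r_1 \geq 8\lambda^2/\epsilon$. The observation $\min(\diam E, \diam F) \leq \diam(E)$ is what lets us bound $\Delta(E,F)$ from below without ever estimating $\diam(F)$; this is essential because the continuum from the second LLC condition may be arbitrarily large in $Z$.
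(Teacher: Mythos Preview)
The paper does not actually prove this lemma: it is one of several results quoted verbatim from \cite{BnK} with the explicit disclaimer ``We omit proofs,'' so there is no in-paper argument to compare against. Your proof is correct and self-contained, and it follows the natural line one would expect (and which is essentially the argument in \cite{BnK}): Part (ii) is a pure triangle-inequality estimate on the cross-ratio, while Part (i) uses LLC(i) around $z_1$ to trap $z_1,z_3$ in a small continuum and LLC(ii) to keep $z_2,z_4$ in a continuum far from $z_1$, with the modified cross-ratio bound $\langle z_1,z_2,z_3,z_4\rangle \le \eta_0([z_1,z_2,z_3,z_4])$ supplying the needed ratio $r_1/D$. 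The bookkeeping you flag at the end (ensuring $D-r_1>0$, disjointness of $E$ and $F$, and using $\min(\diam E,\diam F)\le \diam E$ so that $\diam F$ need not be controlled) is handled correctly.
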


One finds a contradiction by using the Loewner property in $\Stwo$ and the following lemma to appropriate sets.  Hence, our maps be uniformly quasi-M\"obius.

\begin{prop}[\cite{BnK}, Proposition 8.1]
Let $(Z, d, \mu)$, be a $Q$-regular metric measure space, $Q \geq 1$, and let $\mathscr{A}$ be a $K$-approximation of $Z$.  Then there exists a constant $C \geq 1$ depending only on $K$ and the data of $Z$ with the following property: \\
\indent If $E,F \subseteq Z$ are continua and if $\dist(V_E, V_F) \geq 4K$, then
\begin{equation*}
\qmod(E, F) \leq C \qmodg(V_E, V_F).
\end{equation*}
\end{prop}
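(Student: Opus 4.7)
The plan is to inflate an admissible density for the graph modulus $\qmodg(V_E,V_F)$ into an admissible density for the continuous modulus $\qmod(E,F)$ via the neighborhood system of the $K$-approximation, and then to use Ahlfors regularity together with the bounded overlap of the cover to compare the two $Q$-energies.

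Fix $\rho_G\colon V \to [0,\infty)$ admissible for $\qmodg(V_E,V_F)$ and set
\begin{equation*}
\rho(z) = \sum_{v \in V} \frac{\rho_G(v)}{r(v)}\chi_{U_v}(z).
\end{equation*}
The energy estimate is the easier half. A $K$-approximation has bounded overlap, so at most $N = N(K)$ of the $U_v$ contain any given point $z$; combining the power-mean inequality with the bound $\mu(U_v) \lesssim r(v)^Q$ (which comes from $U_v \subseteq B(p(v),Kr(v))$ and Ahlfors $Q$-regularity) yields
\begin{equation*}
\int_Z \rho^Q\,d\mu \leq N^{Q-1}\sum_v \biggl(\frac{\rho_G(v)}{r(v)}\biggr)^Q \mu(U_v) \lesssim \sum_v \rho_G(v)^Q.
\end{equation*}

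The substance of the proof is in showing that $C\rho$ is admissible for $\qmod(E,F)$ for a constant $C = C(K,Q)$. Given a rectifiable curve $\gamma$ joining $E$ and $F$, I would parametrize by arc length and record the $U_v$ the curve enters in order; a standard extraction produces an edge-path $v_0, v_1, \dots, v_n$ in $G$ with $v_0 \in V_E$, $v_n \in V_F$ and consecutive $U_{v_i}$ intersecting. Writing $\gamma_i = \gamma \cap U_{v_i}$ and $L_i$ for the length of $\gamma_i$,
\begin{equation*}
\int_\gamma \rho\,ds \geq \sum_{i=0}^n \frac{\rho_G(v_i)}{r(v_i)} L_i.
\end{equation*}
The separation hypothesis $\dist(V_E,V_F) \geq 4K$ forces the combinatorial length of the chain to be at least $4K$, providing the room needed to pass to a subchain $v_{i_1},\dots,v_{i_m}$ of interior vertices for which $L_{i_j} \gtrsim r(v_{i_j})$. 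The point is that for these interior vertices, neighboring members of the chain lie outside suitable fractional $K$-neighborhoods, forcing $\gamma$ to traverse $U_{v_{i_j}}$ on scale $r(v_{i_j})$. Summing and invoking the graph admissibility of $\rho_G$ (which gives $\sum_j \rho_G(v_{i_j}) \geq 1$ on this subchain, since it still connects $V_E$ to $V_F$) yields $\int_\gamma \rho\,ds \gtrsim 1$, as required.

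Combining the admissibility and energy estimates and infimizing over $\rho_G$ yields $\qmod(E,F) \leq C\qmodg(V_E,V_F)$ with $C$ depending only on $K$ and the data of $Z$. The main obstacle is the length-traversal estimate $L_{i_j} \gtrsim r(v_{i_j})$ for a definite fraction of the chain: this is precisely where the hypothesis $\dist(V_E, V_F) \geq 4K$ does its work, ruling out chains composed almost entirely of glancing intersections in which $\gamma$ barely meets each $U_v$ and the lower bound would fail.
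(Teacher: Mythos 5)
A preliminary remark: the paper does not prove this statement at all — it is quoted verbatim from [BnK, Proposition 8.1], and the paper explicitly says the proofs of these quoted results are omitted. So your proposal can only be judged against the argument in [BnK], whose overall architecture it shares: inflate a discrete admissible weight $\rho_G$ to a Borel density, bound the $Q$-energy using bounded overlap of the cover and Ahlfors regularity, and verify admissibility by a chain argument. Your energy estimate is correct as written.

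The admissibility step, however, has a genuine gap, and it sits exactly at the point you yourself flag as the main obstacle. With $\rho=\sum_v \rho_G(v)r(v)^{-1}\chi_{U_v}$ you extract the vertices $v_{i_1},\dots,v_{i_m}$ of the chain for which $L_{i_j}\gtrsim r(v_{i_j})$ and then invoke discrete admissibility ``since it still connects $V_E$ to $V_F$.'' But deleting the glancing vertices destroys the chain: the surviving vertices need not be pairwise adjacent in $G$, and admissibility of $\rho_G$ only constrains sums over genuine chains joining $V_E$ to $V_F$; it says nothing about sums over an arbitrary subset of such a chain. Moreover, no selection argument can rescue this particular $\rho$: an admissible $\rho_G$ may be supported precisely on vertices whose sets a given curve merely grazes, so $C\rho$ is simply not admissible for any constant depending only on $K$ and the data. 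Note also that $\dist(V_E,V_F)\ge 4K$ does not ``provide room'' for the extraction — knowing the chain has at least $4K$ vertices gives no lower bound on any individual $L_i$. The standard repair (and the mechanism behind Ha\"issinsky's Lemma B.3, quoted later in this paper, whose hypothesis is exactly a crossing condition for dilated balls) is to dilate: take $\rho=\sum_v \rho_G(v)r(v)^{-1}\chi_{B(p(v),2Kr(v))}$. Then for \emph{every} vertex $v$ of the chain, $\gamma$ meets $U_v\subseteq B(p(v),Kr(v))$, and provided $\gamma\not\subseteq B(p(v),2Kr(v))$ it must cross the intervening annulus, so its length inside the dilated ball is at least $Kr(v)\ge r(v)$ and the vertex contributes at least $\rho_G(v)$ to $\int_\gamma\rho\,ds$; summing over the whole chain and using admissibility of $\rho_G$ gives $\int_\gamma\rho\,ds\ge 1$. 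This is where the hypothesis $\dist(V_E,V_F)\ge 4K$ is actually used: combined with the $K$-approximation axioms (metric proximity at scale $r(v)$ forces bounded combinatorial distance, cf. property (iii) of the quoted Proposition 6.7), containment of $\gamma$ — a curve joining $E$ to $F$ — in a single ball $B(p(v),2Kr(v))$ with $U_v\cap\gamma\ne\emptyset$ would produce a chain from $V_E$ to $V_F$ of combinatorial length $\lesssim K$, a contradiction. The energy bound survives the dilation since $\mu(B(p(v),2Kr(v)))\lesssim_K r(v)^Q$ and the dilated balls still have overlap controlled in terms of $K$ in the setting where this proposition is applied.
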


Lastly, we may find a subsequence which converges to our desired map.

\begin{lemma}[\cite{BnK}, Lemma 3.1]
Suppose $(X, d_X)$ and $(Y, d_Y)$ are compact metric spaces, and $f_k \colon D_k \to Y$ for $k \in \N$ is an $\eta$-quasi-M\"obius map defined on a subset $D_k$ of $X$.  Suppose
\begin{equation*}
\lim_{k \to \infty} \sup_{x \in X} \dist(x, D_k) = 0
\end{equation*}
and that for $k \in \N$ there exist triples $(x_1^k, x_2^k, x_3^k)$ and $(y_1^k, y_2^k, y_3^k)$ of points in $D_k \subseteq X$ and $Y$, respectively, such that $f(x_i^k) = y_i^k$, $k \in \N$, $i \in \{1,2,3\}$,
\begin{equation*}
\inf \{ d_X (x_i^k, x_j^k) : k \in \N, i, j \in \{1,2,3\}, i \neq j\} > 0,
\end{equation*}
and 
\begin{equation*}
\inf \{ d_X (x_i^k, x_j^k) : k \in \N, i, j \in \{1,2,3\}, i \neq j\} > 0.
\end{equation*}
Then the sequence $(f_k)$ subconverges uniformly to an $\eta$-quasi-M\"obius map $f \colon X \to Y$, i.e. there exists an increasing sequence $(k_n)$ in $\N$ such that 
\begin{equation*}
\lim_{n \to \infty} \sup_{x \in D_{k_n}} d_Y(f(x), f_{k_n}(x)) = 0.
\end{equation*}
\end{lemma}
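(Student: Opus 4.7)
The plan is a Cantor diagonal extraction coupled with a quasi-M\"obius equicontinuity argument, in which the normalization triples play the role that anchor points play in the standard Arzel\`a--Ascoli proof for Möbius maps. First, by passing to a subsequence and using compactness of $X$ and $Y$, I may assume $x_i^k \to x_i$ in $X$ and $y_i^k \to y_i$ in $Y$ for each $i \in \{1,2,3\}$; the two infimum hypotheses (the second of which is clearly meant to concern the $y_i^k$) guarantee that $x_1, x_2, x_3$ are pairwise distinct and $y_1, y_2, y_3$ are pairwise distinct. Fix a countable dense subset $D \subseteq X$ containing $\{x_1, x_2, x_3\}$; by the density hypothesis, for each $x \in D$ I can choose $x^{(k)} \in D_k$ with $x^{(k)} \to x$.

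Next, I would establish uniform precompactness of the images. Fix $x \in D \setminus \{x_1, x_2, x_3\}$ and an index pair $i \neq j$. The cross-ratio $[x^{(k)}, x_i^k, x_\ell^k, x_j^k]$ (with $\ell$ the third index) converges to a positive finite limit, so the $\eta$-quasi-M\"obius condition gives an upper bound on $[f_k(x^{(k)}), y_i^k, y_\ell^k, y_j^k]$. Passing to the modified cross-ratio via the comparison function $\eta_0$ from \cite[Lemma 2.3]{BnK} and using the uniform separation of the $y_i^k$, this forces $f_k(x^{(k)})$ to remain in a compact subset of $Y$. Applying a diagonal extraction over the countable set $D$ and compactness of $Y$, I pass to a further subsequence for which $f_k(x^{(k)})$ converges for every $x \in D$; call the limit $f(x)$, and set $f(x_i) = y_i$.

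The central step is to show $f$ extends continuously to $X$ via an equicontinuity estimate on $D$. Given $\varepsilon > 0$ and $x, x' \in D$, choose two of the normalization points, say $x_j$ and $x_\ell$, that are bounded away from both $x$ and $x'$ (possible because there are three anchors and at most two can be close). If $d_X(x, x')$ is small relative to the distances to $x_j$ and $x_\ell$, then $[x, x', x_j, x_\ell]$ is small; applying the quasi-M\"obius inequality at the approximants and passing to the limit forces $[f(x), f(x'), y_j, y_\ell]$ to be small, and then the reverse comparison between cross-ratio and modified cross-ratio, combined with the separation of the $y_j, y_\ell$, produces $d_Y(f(x), f(x'))$ small unless $f(x), f(x')$ both approach $y_j$ or both approach $y_\ell$. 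A case analysis over the three possible anchor pairs rules out that degeneracy globally, giving uniform continuity of $f$ on $D$ and hence a continuous extension to all of $X$.

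Finally, the limit $f$ is $\eta$-quasi-M\"obius because cross-ratios pass to the limit through continuity and $D$ is dense; and uniform convergence of $f_{k_n}$ to $f$ on the varying domains follows by combining pointwise convergence on a finite $\varepsilon$-net in $D$, the equicontinuity estimate, and the density of $D_{k_n}$ in $X$. The main obstacle is precisely the degeneration of cross-ratio control near the three normalization points: one must orchestrate the choice of anchor pair $\{y_j, y_\ell\}$ so as to avoid configurations in which the cross-ratio ceases to control the metric distance, and separately verify continuity of $f$ at each $x_i$ by using that $f_k$ already converges at $x_i^k$ to $y_i$.
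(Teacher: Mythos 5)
Two remarks before the substance: the paper itself gives no proof of this statement (it is quoted from \cite[Lemma 3.1]{BnK} and the proofs of all the quoted results are explicitly omitted), so your proposal can only be compared with the standard Bonk--Kleiner argument; and your overall scheme (normalize by the triples, prove a cross-ratio equicontinuity estimate, run an Arzel\`a--Ascoli/diagonal extraction over a countable dense set, pass the cross-ratio inequality to the limit) is exactly that argument, including your correct reading of the obvious typo in the second infimum hypothesis. The genuine gap is in the central equicontinuity step, where you use the wrong ordering of the cross-ratio. With the paper's convention $[z_1,z_2,z_3,z_4]=\frac{d(z_1,z_3)d(z_2,z_4)}{d(z_1,z_4)d(z_2,z_3)}$, the quantity you invoke, $[x,x',x_j,x_\ell]=\frac{d(x,x_j)\,d(x',x_\ell)}{d(x,x_\ell)\,d(x',x_j)}$, does not contain $d_X(x,x')$ at all; when $x,x'$ are close to each other and far from the anchors it is close to $1$, not small, so the implication you start from is false and the chain never gets going. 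The arrangement you need is $[x,x_j,x',x_\ell]=\frac{d(x,x')\,d(x_j,x_\ell)}{d(x,x_\ell)\,d(x_j,x')}$, which is small when $d_X(x,x')$ is small and the two chosen anchors are at definite distance from $x$ and $x'$; its image under $f_k$ equals $\frac{d(f_kx,f_kx')\,d(y_j^k,y_\ell^k)}{d(f_kx,y_\ell^k)\,d(y_j^k,f_kx')}\geq \frac{d(f_kx,f_kx')\,\sigma}{\diam(Y)^2}$, where $\sigma>0$ is the uniform separation of the $y_i^k$, so $d_Y(f_kx,f_kx')$ is controlled directly, uniformly in $k$, with no degenerate case. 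The ``unless $f(x),f(x')$ both approach $y_j$ or $y_\ell$'' escape clause and the unexplained ``case analysis over the three anchor pairs'' in your write-up are artifacts of the misordered cross-ratio and are not justified as written; note also that what the final $\varepsilon$-net argument needs is equicontinuity of the family $(f_k)$ uniformly in $k$, not merely uniform continuity of the limit on the dense set, and the corrected estimate delivers exactly that. (The preliminary ``precompactness of images'' step is unnecessary, since $Y$ is compact.)

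A second, smaller but real, omission: in this paper (as in \cite{BnK}) a quasi-M\"obius map is by definition a homeomorphism, so the conclusion includes injectivity of $f$, and passing cross-ratio inequalities to the limit gives only the inequality --- a priori a limit of embeddings can collapse sets to points. The normalization triples rule this out, but one must say how. After passing to a subsequence the limits $y_1,y_2,y_3$ of the $y_i^k$ are pairwise distinct (uniform separation), and $f(x_i)=y_i$. If $f(u)=f(v)$ with $u\neq v$, choose two indices $i\neq j$ with $y_i\neq f(u)\neq y_j$ (possible since at most one of the three values equals $f(u)$); then $u,v\notin\{x_i,x_j\}$, and applying the $\eta$-quasi-M\"obius inequality for $f_k$ to approximating four-tuples of $(u,x_i,x_j,v)$ gives a uniformly bounded right-hand side, while the image cross-ratio $\frac{d(f_ku_k,\,y_j^k)\,d(y_i^k,\,f_kv_k)}{d(f_ku_k,\,f_kv_k)\,d(y_i^k,\,y_j^k)}$ tends to $\infty$, a contradiction. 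Without this step (or an equivalent one) the proposal does not yield that the limit is a quasi-M\"obius map in the sense used here.
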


\subsection*{Proof of uniformization}

\begin{proof}[Proof of Theorem \ref{qs unif thm}]
Let $X$ be a hyperbolic filling of $Z$ with parameter $s$.  We apply \cite[Proposition 6.7]{BnK} to the vertices $V_X^n$ to form a graph $Y^n$, so $V_X^n \subseteq V_{Y^n}$ and $Y^n$ embeds as a triangulation of $Z$.  As in \cite{BnK}, this induces triangulations $T^n$ on $\Stwo$ with the same incidence pattern as $Y^n$ and for which the incidence pattern is realized as that of a normalized circle packing. This allows us to define functions $\phi_n \colon V_{T^n} \to Z$ where we map a vertex to its corresponding image point in the triangulation.  We note each $Y^n$ forms a $K$-approximation of $Z$ in the language of \cite{BnK} when associating the objects $p(y) = y$, $r(y) = s^{-n}$, and $U_y = B(p(y), Kr(y))$ by \cite[Corollary 6.8]{BnK} and $T^n$ forms a $K'$-approximation of $\Stwo$ with appropriately associated objects by \cite[Lemma 5.1]{BnK}.

We form an infinite graph $Y$ from the graphs $Y^n$ by connecting vertices $y \in Y^n$ and $y' \in Y^{n+1}$ if $y \in X^n$, $y' \in X^{n+1}$, and $y, y'$ are connected in $X$.  We imbue $Y$ with the graph metric.  

We claim $X$ and $Y$ are quasi-isometric.  Define $F \colon Y \to X$ on vertices by sending a given vertex $y \in Y^n$ to the closest vertex (in $V_{Y^n}$) to $y$ that belongs to $V_X^n$.  If $y,y'$ are neighboring vertices in $V_{Y^n}$ and $x, x' \in V_X^n$ are the closest vertices to $y,y'$, then by \cite[Proposition 6.7]{BnK} (iii) it follows that $d(x,x') < (2K+1)K2s^{-n}$.  Hence there is a constant $C(K)$ such that $(C(K)B_x) \cap B_{x'} \neq \emptyset$.  Thus, when $|y-y'|_Y = 1$ it follows that $|x-x'|_X$ is bounded by a constant depending only on $K$.  Now suppose the images $x, x' \in V_X^n$ of $y,y'$ satisfy $|x-x'|_X = 1$.  Then, $B_x \cap B_{x'} \neq \emptyset$, so $d(y,y') < 6s^{-n}$.  As noted in \cite{BnK}, from (iii) we conclude $|y-y'|_Y \leq C(6,K,C_0,\lambda)$.  Thus, there is a constant $C > 0$ such that for all $y, y' \in Y^n$, we have 
\begin{equation*}
\frac{1}{C}|y-y'|_Y-C \leq |x-x'|_X \leq C|y-y'|_Y + C.
\end{equation*}
This holds for all $y \in Y$ as the only connections between levels are those already there from $X$ and $F(x) = x$ whenever $x \in V_X$.

For the coarse inverse $G$, we set $G(x) = x \in Y$.  It follows from \cite[Proposition 6.7]{BnK} (iii) that $G \circ F$ and $F \circ G$ are within bounded distance to the identity and that every point in $Y$ is within $K$ of a point in $V_X$.

We claim the family $\{\phi_n\}$ restricted to suitable subsets of $V_{Y^n}$ is uniformly quasi-M\"obius.  For this we apply \cite[Lemma 4.7]{BnK} to $Y^n$ with $L = K$ to generate $W = W^n$, a set of vertices such that $p(W)$ is weakly uniformly perfect.  We restrict $\phi_n$ to these subsets, which we denote as $\phi_n^W$.  Suppose that $\phi_n^W$ is not uniformly quasi-M\"obius.  Then, the condition given in \cite[Lemma 3.3]{BnK} must fail.  Hence, there exists $\epsilon > 0$ such that for each $\delta_k = 1/k$ there exists $\phi_{n_k}$ and four points $x_{1,k}, x_{2,k}, x_{3,k}, x_{4,k} \in W^{n_k}$ such that
\begin{equation*}
[x_{1,k}', x_{2,k}', x_{3,k}', x_{4,k}'] < \delta_k
\end{equation*}
but
\begin{equation*}
[x_{1,k}, x_{2,k}, x_{3,k}, x_{4,k}] \geq \epsilon
\end{equation*}
where $x'$ denotes the image of $x$ under $\phi_{n_k}^W$.  Note we may assume $n_k \to \infty$.
Applying \cite[Lemma 2.10]{BnK} (i) we see that if $\delta_k$ is small enough, then there are continua $E_k'$ and $F_k'$ in $Z$ such that $x_{1,k}', x_{3,k}' \in E_k'$, $x_{2,k}', x_{4,k}' \in F_k'$, and $\Delta(E_k',F_k')$ is large quantitatively.  We consider open neighborhoods $N_\rho(E_N')$ and $N_\rho(F_N')$ where $N$ is large enough and $\rho$ is small enough such that $\Delta(N_\rho(E_N'), N_\rho(F_N'))$ is large and hence  $\varphi_2(\Delta(N_\rho(E_N'), N_\rho(F_N')))$ is small.  Fix $N$ and $\rho$.

For ease of notation, we write $x_i = x_{i,N}$ and $x_i' = x_{i,N}'$.  Consider $V_{E_j'} \subseteq V_{Y^j}$ defined by $V_{E_j'} = V_{E_j'}(N) = \{y \in Y^j : U_y \cap E_N' \neq \emptyset\}$ and likewise for $V_{F_j'}$.  From the corresponding vertices in $T^j$ we define $V_{E_j}$ and $V_{F_j}$.  From the incidence pattern in $V_{E_j}$ and $V_{F_j}$ and the fact that $Y^j$ embeds in $Z$ we extract continua $E_j$ and $F_j$ such that $x_1, x_3 \in E_j$ and $x_2, x_4 \in F_j$.  From this it follows that $\Delta(E_j, F_j) \leq \langle x_1, x_2, x_3, x_4 \rangle^{-1}$.  As $\Stwo$ is a Loewner space, it follows that there is a lower bound $0 < a \leq \modtwo(E_j, F_j)$.  For large $j$, the combinatorial distance between $E_j$ and $F_j$ is large in $T^j$ as these sets were constructed from $Y^j$ and the sets there have large combinatorial distance.  Thus, we may apply \cite[Proposition 8.1]{BnK} to conclude there is a constant $c > 0$ depending only on $[x_1, x_2, x_3, x_4]$ such that for large $j$ we have $c < \modtwo(V_{E_j}, V_{F_j}, T^j)$.  As the graphs are the same, this means for large $j$ we have $c < \modtwo(V_{E_j'}, V_{F_j'}, Y^j)$.

Let $\tau \colon E_X \to [0,1]$ be admissible for $\wcaptwo(N_\rho(E_N'), N_\rho(F_N'))$ and satisfy $\norm{\tau}_{2,\infty} <  \infty$.  From Lemma \ref{Last Path Lemma}, for large enough levels $j$ from any vertex in $V_{E_j'}$ we can guarantee the existence of a path connecting down to $N_\rho(E_N')$ with $\tau$-length bounded above by $1/4$ (just avoid the required large values $\beta$ for enough levels; these occur on finitely many levels).  Then, $f_j \colon V_{Y^j} \to [0,\infty)$ defined by $f_j(v) = \sum_{e \sim v} 2\tau|_j(e)$ must be admissible for $\modtwo(N_\rho(E_N'),N_\rho(F_N'),Y^j)$. The norms $\norm{f_j}_2$ and $\norm{2\tau|_j}_2$ are comparable, so we conclude that there is a constant $c' > 0$ depending only on $c$ such that $\norm{\tau|_j}_2^2 \geq c'$ for these levels.  

From \cite[Proof of Theorem 1.4]{BnS}, for any $N'>1$ there is an $n \in [N',2N']$ such that 
\begin{equation*}
\norm{\tau|_n}_2^2 = \sum_{e \in E_{Y^n}} \tau(e)^2 \lesssim \norm{\tau}_{2,\infty}^2.
\end{equation*}
but the left hand side of this is bounded below by $c' > 0$ for large $n$ and the right hand side tends to $0$ with different choices of $N$ and $\rho$.  Thus, our maps $\phi_n^W$ are uniformly quasi-M\"obius.

Lastly, we apply \cite[Lemma 3.1]{BnK} to the sequence $\phi_n^W$.  Because we normalized our circle packings, we need only know that the subsets $W^n$ have the property that 
\begin{equation*}
\lim_{n \to \infty} \sup_{x \in X} \dist(x,W^n) = 0.
\end{equation*}
The vertices in $W^n$ are a maximal combinatorially $K$-separated subset of $Y^n$.  Thus, by \cite[Proposition 6.7]{BnK} every vertex in $y \in V_{Y^n}$ satisfies $d(y, W^n) < K^2 s^{-n}$.  In particular, every $x \in V_X^n$ has this property and every point $z \in Z$ is $s^{-n}$ close to some point $x \in V_X^n$, so every $z \in Z$ is within $(K^2 + 1) s^{-n}$ of some point in $W^n$.  

As quasi-M\"obius maps on bounded spaces are quasisymmetric, the result follows.
\end{proof}

We now prove Corollary \ref{local QS}.

\begin{proof}[Proof of Corollary \ref{local QS}]
Let $X$ be a hyperbolic filling of $Z$.  By Theorem \ref{qs unif thm} it suffices to show there is a constant $C > 0$ and a value $t_0 > 0$ such that if $\Delta(A,B) = t_0$, then $\wcaptwo(A,B) < C$.  

\sloppypar
For each $z \in Z$, there is an $r_z > 0$ and an increasing function $\eta_z \colon (0,\infty) \to (0,\infty)$ with $\eta_z(t) \to 0$ as $t \to 0$ such that there is an $\eta_z$-quasisymmetry $f_z \colon \overline{B(z,r_z)} \to V_z \subseteq \Stwo$, where $\overline{B(z,r_z)}$ is the closure of $B(z,r_z)$ in $Z$.  We may choose $r_z$ such that $\diam(V_z) < \diam(\Stwo)/10$.  Set $U_z = B(z, r_z/2)$.  The collection $\{U_z\}_{z \in Z}$ covers $Z$, so there is a finite subcover $U_{z_1}, \dots, U_{z_N}$ of $Z$.  For ease of notation, we will replace $z_i$ by $i$ in indexing subscripts.  By setting $\eta = \max_{1 \leq i \leq N} \{\eta_i\}$, we see each $\overline{B(z_i,r_i)}$ is $\eta$-quasisymmetrically homeomorphic to $V_i$.  Let $\delta > 0$ be such that for every $z \in Z$, there is an $i$ such that $B(z, \delta) \subseteq U_i$ (this is given by the Lebesgue number; see \cite[p179]{Mu}).  Suppose $\Delta(A,B) = t_1$.  We may assume $\diam(A) \leq \diam(B)$.  Then, $\dist(A,B) = t_1 \diam(A)$, so $\diam(A) \leq 1 / t_1$.  Choose $t_1$ large enough that $\diam(A) < \delta$, so we may assume $A \subseteq U_i$ for some $i$.  Let $A' = f_i(A)$  Let ${B_0 = (\{z : d(z,z_i) > 3r_i / 4\} \cup B) \cap B(z_i, r_i)}$ and let $B' = f_i(B_0) \cup (\Stwo \setminus V_i)$.  Then, $A', B' \subseteq \Stwo$ are open.  As $\diam(V_i) < \diam(\Stwo)/10$, we see ${\Delta(A', B') = \dist(A', B') / \diam(A')}$.  As $\{z : d(z,z_i) > 3r_i / 4\} \subseteq f_i^{-1}(B')$, it follows that $\dist(A',B') = \dist(A', B' \cap V_i)$.  As we have a uniform $\eta$, by Lemma \ref{qs rel dist lemma} applied to the spaces $\overline{B(z_i,r_i)}$ and $V_z$ it follows that, given $t_2 > 0$, if $t_1$ is large enough then $\Delta(A', B') \geq t_2$.

Let $Y$ be a hyperbolic filling for $\Stwo$.  The proof of (ii) $\implies$ (iii) in Theorem \ref{varphi equivalence} shows there is a $t_3 > 0$ and an $R(t_3) > 0$ such that if $\Delta(A', B') \geq t_3$, then there is an admissible vertex function $\tau' \colon V_Y \to [0,1]$ for $\wcaptwo(A', B')$ with $\norm{\tau'}_{2,\infty}^2 \leq 1$ that is supported on $H(B(a',R))$ (the hull of $B(a',R)$, see Definition \ref{hull def}) for some $a' \in A'$.  From Remark \ref{T support}, $R = c_3^N \diam(A')$.  Let $D > 0$.  As ${f(\{z : d(z,z_i) > 3r_i / 4\}) \subseteq B'}$, if $t_3$ is large enough, then ${\rho_Y(H(B(a,R)),Y \setminus H(V_z)) \geq D}$.  

Let $W_X = \{v \in V_X : B_v \subseteq B(z_i, r_i)\}$.  Recall we define the map $f_i|_{W_X}$ as a vertex map from $W_X$ into $V_Y$ by finding the highest level $w$ in $Y$ such that $f_i(B_v) \subseteq B_w$ and setting $f_i|_{W_X}(v) = w$.  It follows that $f_i|_{W_X}$ is a quasi-isometry with its image with constants depending only on $\eta$.  In particular, there is a $D > 0$ such that if $v, v' \in W_X$ satisfy $|v-v'| = 1$, then $|f(v) - f(v')| \leq D$ in $V_Y$.  Define $\tau \colon V_X \to [0,\infty]$ by setting, for $v \in W_X$,
\begin{equation*}
\tau(v) = \sum_{|f_i|_{W_X}(v) - y| \leq D} \tau'(y)
\end{equation*}
and setting $\tau(v) = 0$ otherwise.

We claim $\tau$ is admissible for $\wcaptwo(A,B)$.  Let $\gamma$ be a vertex path connecting $A$ and $B$.  Let $\gamma_W$ be the subpath formed from vertices in $\gamma$ that on one side converge to $A$ and on the other side continue until we reach the first vertex outside of $W_X$.  Denote this $\{v_n\}$.  We create image path $\gamma'_W$ in $V_Y$ by connecting $f_i|_{W_n}(v_n)$ and $f_i|_{W_n}(v_{n+1})$ by a path $\sigma_n$ in $V_Y$ of length at most $D$ for each $n$ where this makes sense.  Then, 
\begin{equation*}
\sum_\gamma \tau(v) \geq \sum_{\gamma_W} \tau(v) = \sum_n \tau(v_n) = \sum_n \sum_{|f_i|_{W_X}(v_n) - y| \leq D} \tau'(y) \geq \sum_n \sum_{y \in \sigma_n} \tau'(y).
\end{equation*}
Now, if $\gamma$ leaves $W_X$, then the paths $\sigma_n$ leave $H(B(a',R))$.  In this case, by the definitions of $W_X$ and $B(a', R)$ it follows that we may continue the paths to connect to $B'$ outside of $H(B(a',R))$, meaning by the admissibility of $\tau'$ that we must have $\sum_{v \in \gamma} \tau(v) \geq 1$.  Otherwise, $\gamma$ connects $A$ and $B$ in $W_X$ and so the path $\gamma'_W$ connects $A', B'$ in $V_Y$, so again by admissibility of $\tau'$ we have $\sum_{v \in \gamma} \tau(v) \geq 1$.  Either way, $\tau$ is admissible for $\wcaptwo(A,B)$.

The norm comparison between $\tau$ and $\tau'$ follows as in \cite[Theorem 1.4]{L1}, the important thing to note is the quasi-isometry data depends only on $\eta$.
\end{proof}

\fussy
\section{Other exponent results}\label{Section other thms}


\subsection*{Group action equivalence}

We now show that the presence of a cocompact group action is strong enough to conclude $Q_w = Q_w'$.  Recall our setting: $(Z, d, \mu)$ is a compact, connected, Ahlfors regular metric space with hyperbolic filling $X$ and there exists a metric space $Y$ quasi-isometric to $X$ and a subgroup $G \subseteq \Iso(Y)$ acting cocompactly on $Y$.  We first prove some preparatory lemmas.

\begin{lemma}\label{path dist lemma}
Let $\gamma \colon \N_0 \to V_X$ be a path converging to a point $z \in Z$ such that there is a function $f$ such that for all $k$ we have $\ell(\gamma(k)) \geq f(k)$.  Then, for all $k$ we have $d(\gamma(k),z) \leq 4s \sum_{j=k}^\infty s^{-f(j)}$.
\end{lemma}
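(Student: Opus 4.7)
The plan is to bound $d(\gamma(k),z)$ by telescoping along the path and estimating each one-step displacement using the intersection property of adjacent balls in the hyperbolic filling.

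First, since $\gamma$ converges to $z$, the triangle inequality gives
\begin{equation*}
d(\gamma(k),z) \leq \sum_{j=k}^\infty d(\gamma(j),\gamma(j+1)),
\end{equation*}
so it suffices to bound each summand by $4s \cdot s^{-f(j)}$. For this I would use the two defining features of an edge in $X$: adjacent vertices $v,w$ satisfy $B_v \cap B_w \neq \emptyset$ and $|\ell(v) - \ell(w)| \leq 1$. Combined with the formula $r(B_v) = 2s^{-\ell(v)}$, the intersection of $B_{\gamma(j)}$ and $B_{\gamma(j+1)}$ forces
\begin{equation*}
d(\gamma(j),\gamma(j+1)) \leq 2s^{-\ell(\gamma(j))} + 2s^{-\ell(\gamma(j+1))}.
\end{equation*}

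Now I bring in the hypothesis $\ell(\gamma(j)) \geq f(j)$, which yields $s^{-\ell(\gamma(j))} \leq s^{-f(j)}$, together with the edge condition $\ell(\gamma(j+1)) \geq \ell(\gamma(j)) - 1 \geq f(j) - 1$, which yields $s^{-\ell(\gamma(j+1))} \leq s^{\,1-f(j)} = s \cdot s^{-f(j)}$. Plugging these in and using $s > 1$ gives
\begin{equation*}
d(\gamma(j),\gamma(j+1)) \leq 2s^{-f(j)} + 2s \cdot s^{-f(j)} \leq 4s \cdot s^{-f(j)}.
\end{equation*}
Summing over $j \geq k$ then gives the claimed bound $d(\gamma(k),z) \leq 4s \sum_{j=k}^\infty s^{-f(j)}$.

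There isn't really a main obstacle here; the only subtle point to keep track of is that one should compare $\ell(\gamma(j+1))$ back to $f(j)$ (using the edge constraint) rather than to $f(j+1)$, since otherwise telescoping would produce a shifted sum with a worse constant. Provided the right hand side is finite (otherwise the bound is trivial), passing to the limit is automatic from $\gamma(k) \to z$.
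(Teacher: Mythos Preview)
Your proof is correct and follows essentially the same approach as the paper: telescope along the path, bound each step $d(\gamma(j),\gamma(j+1))$ by the sum of the two ball radii, and use the level constraints to get $4s^{-f(j)+1}$. The paper's own argument is terser (it jumps straight to $d(\gamma(j),\gamma(j+1)) \leq 4s^{-f(j)+1}$ without separating the two radii), but the content is identical.
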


\begin{proof}
As $\gamma$ is a path, we have $d(\gamma(j), \gamma(j+1)) \leq 4s^{-f(j) + 1}$.  Thus, 
\begin{equation*}
d(\gamma(k),z) \leq \sum_{j=k}^\infty 4s^{-f(j) + 1} = 4s \sum_{j=k}^\infty s^{-f(j)}
\end{equation*}
\end{proof}

\sloppypar
\begin{lemma}\label{QI dist lemma}
Suppose that there are constants $\alpha, \beta, D > 0$ such that for every $v \in X$, there is an $(\alpha, \beta)$ quasi-isometry $f_v \colon V_X \to V_X$ with $|f_v(v) - O| \leq D$.
Then, for each $\epsilon > 0$ there exists $\delta_0 > 0$ with the following property.  Let
$A \subseteq Z$ be open, $\diam(A)  = \delta < \delta_0$.  Then, there are $t_0(\delta_0), t_1(\delta)$ such that if $t \in [t_0, t_1]$, $a \in A$, and $B = {Z \setminus \overline{B(a, t \delta)}}$, then $B \neq \emptyset$ and there exists $v \in V_X$ such that if $F$ is the quasisymmetry on $Z$ induced by $f_v$ and $A' = F(A), B'=F(B)$, then $\diam(A'), \diam(B') < \epsilon$.  Moreover, there is a constant $c(\alpha, \beta, X)$ such that $\dist(A', B') \geq c$.
\end{lemma}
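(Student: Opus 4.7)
The idea is to normalize the configuration of $A$ and $B$ via a carefully chosen induced quasisymmetry that acts like a M\"obius transformation. I would pick $v \in V_X$ at a level whose shadow ball $B_v$ has radius on the order of the geometric mean $\sqrt{t}\,\delta$ of the two natural scales $\delta = \diam A$ and $t\delta \sim \dist(A,B)$, with $a$ sitting well inside $B_v$. Applying the standard boundary-extension result for quasi-isometries of Gromov hyperbolic spaces to the given $f_v$ yields a quasisymmetry $F \colon Z \to Z$ with uniform distortion $\eta_0 = \eta_0(\alpha,\beta,D,X)$ that behaves like a M\"obius transformation with expanding pole near the boundary projection $p_v$ of $v$. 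Under such an $F$, both $A$ (sitting inside $B_v$) and $B$ (sitting outside a larger multiple of $B_v$) should get mapped to small sets near two ``opposite'' points of $Z$, separated by a fixed constant.

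Quantitatively, I would set $\delta_0 = c\epsilon^{C}$ and $t_0 = C'/\epsilon^{C}$ for constants depending only on $\eta_0$, and $t_1 \sim 1/\delta$ to keep $B \neq \emptyset$. For $a \in A$ and $t \in [t_0,t_1]$, pick $v$ at level $\ell$ with $r(B_v) = 2s^{-\ell} \sim \sqrt{t}\,\delta$ containing $a$; using Ahlfors regularity and the covering structure of the hyperbolic filling (after shifting $\ell$ by a bounded amount if needed), one can arrange that $d(a,\partial B_v) \geq r(B_v)/C_0$, so that $A \subseteq C_1 B_v$ and $B \subseteq Z\setminus C_2 B_v$ for constants $C_1 < C_2$.

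The bound $\diam(F(A)) < \epsilon$ is a direct application of the quasisymmetric inequality for $F$ to the pair $(A, C_1 B_v)$: since $\diam(A)/\diam(C_1 B_v) \lesssim \delta/r(B_v) \sim 1/\sqrt{t}$, one gets $\diam(F(A)) \leq \eta_0(1/\sqrt{t})\,\diam(F(C_1 B_v)) \leq \eta_0(1/\sqrt{t}) < \epsilon$ by our choice of $t_0$. The separation $\dist(F(A),F(B)) \geq c$ will then follow by combining Lemma~\ref{qs rel dist lemma}, which gives $\Delta(F(A),F(B)) \geq \Phi(t) \geq \Phi(t_0)$ for a function $\Phi$ depending only on $\eta_0$, with the diameter bounds just established and a uniform lower bound on the distance between the distinguished ``polar'' image points (obtained via three-point quasisymmetric control on the uniformly perfect space $Z$).

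The main obstacle is the bound $\diam(F(B)) < \epsilon$, since $B$ has $d_O$-diameter $\sim 1$ and naive quasisymmetry gives no useful upper bound. The plan here is to exploit that $F$ encodes a change of base point from $O$ to $v$ in the visual metric structure: the two visual metrics $d_O$ and $d_v$ on $Z$ satisfy a M\"obius-type comparison $d_v(z,w) \asymp r(B_v)\,d_O(z,w)/(d_O(z,p_v)\,d_O(w,p_v))$ on points away from $p_v$. For $z,w \in B$ we have $d_O(z,p_v), d_O(w,p_v) \gtrsim t\delta$, giving $\diam_{d_v}(B) \lesssim r(B_v)/(t\delta)^2$, which is small for our chosen parameters; since $F$ transports the $d_v$-geometry to the $d_O$-geometry (up to a controlled snowflake coming from the $(\alpha,\beta)$-QI constants), this $d_v$-smallness of $B$ translates into the desired $d_O$-smallness of $F(B)$.
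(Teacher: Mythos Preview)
Your overall strategy is the same as the paper's: pick the vertex $v$ on a geodesic ray to $a$ at the ``midpoint'' level where $r(B_v)\sim\sqrt{t}\,\delta$, and use that the induced boundary map $F$ acts on $Z$ like re-basing the visual metric from $O$ to $v$. The paper carries this out entirely inside $X$ via Gromov products $(\cdot\,|\,\cdot)_{v_m}$, while you phrase it on the boundary via the M\"obius-type formula for $d_v$; these are the same computation in two languages.

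There is, however, a genuine gap in your $\diam(F(B))$ estimate. Your bound
\[
\diam_{d_v}(B)\ \lesssim\ \frac{r(B_v)}{(t\delta)^2}\ \sim\ \frac{1}{t^{3/2}\delta}
\]
is a valid inequality, but it is \emph{not} small for the allowed parameters: the statement only assumes $\delta<\delta_0$ and $t\ge t_0\sim 1/\delta_0$, so $t^{3/2}\delta$ has no lower bound (take $t=t_0$ fixed and $\delta\to 0$). The error is that you simultaneously used the two incompatible worst cases $d_O(z,w)\sim 1$ and $d_O(z,p_v),d_O(w,p_v)\sim t\delta$. The paper avoids this by bounding $(b\,|\,b')_{v_m}$ indirectly: it first shows $(b\,|\,O)_{v_m}\gtrsim M/3\sim \tfrac{1}{3}\log_s t$ for every $b\in B$ (via the distance from $v_m$ to the geodesic $[O,b]$), and then applies the $\delta$-inequality to get $(b\,|\,b')_{v_m}\gtrsim M/3$, hence $\diam(B')\lesssim t^{-1/3}$ up to QI constants, which depends only on $t\ge t_0$. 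Your M\"obius approach can be salvaged with one extra line: use $d_O(z,w)\le d_O(z,p_v)+d_O(w,p_v)$ in the numerator to get
\[
d_v(z,w)\ \lesssim\ \frac{r(B_v)}{\min\bigl(d_O(z,p_v),d_O(w,p_v)\bigr)}\ \le\ \frac{r(B_v)}{t\delta}\ \sim\ \frac{1}{\sqrt{t}},
\]
which \emph{is} small since $t\ge t_0$ is large.

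Your argument for $\dist(F(A),F(B))\ge c$ is also under-specified. Combining Lemma~\ref{qs rel dist lemma} with the diameter bounds gives only $\dist(F(A),F(B))\ge \Phi(t_0)\cdot\min(\diam F(A),\diam F(B))$, and you have no lower bound on the diameters. The paper instead picks a single $b\in B$ with $d(a,b)\ge 1/3$ (possible since $t\delta\le s^{-10}$) and bounds $(a\,|\,b)_{v_m}$ from \emph{above} by a constant depending only on $s$ and $X$, giving $d(F(a),F(b))\ge c$; the distance bound then follows from $\dist(F(A),F(B))\ge d(F(a),F(b))-\diam F(A)-\diam F(B)$. Your ``polar image points'' idea is pointing at exactly this, but you should make the computation explicit rather than appeal to three-point control.
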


\fussy
\begin{proof}
We assume $A, B, \delta$ are defined as in the statement of the theorem.  We define $t_1(\delta) = s^{-10} / \delta$, from which is follows that $B \neq \emptyset$ as $\diam(Z) = 1$.  Thus, it suffices to establish $\delta_0$ and $t_0$.  We note this choice of $t_1$ also guarantees $O \notin H(B(a, t_1 \delta))$.

We define $M = M(\delta, t)$ as follows.  Consider $H(A)$ and $V_X \setminus H(B)$.  Fix $a \in A$.  We can find a geodesic ray $\gamma_A$ in $X$ beginning at $O$ and with limit $a$, so $\gamma_A(k) \in V_X$ for all $k \in \N$.  Let $\ell_B = \sup \{\ell(\gamma_A(k)) : \gamma_A(k) \notin H(B(a, t\delta))\}$ and $\ell_A = \inf \{\ell(\gamma_A(k)) : \gamma_A(k) \in H(A)\}$.  Set $M = \ell_A - \ell_B$.  It follows that $M(t,\delta) \simeq \log_s(2t)$.  Set $t_0(\delta_0) = t_1(\delta_0) / 2 = s^{-10} / (2 \delta_0)$.  Then, if $\delta_0$ is small, $M(t, \delta) \gtrsim \log_s(s^{-10} / \delta_0)$ is large.  Hence, to prove the claim it suffices to show the desired properties hold when $M$ is large enough.

\sloppypar
We can find a vertex $v_m \in \gamma_a$, with $\ell(v_m) = m$, that has distance at least $M/3$ from both $H(A)$ and $V_X \setminus H(B(a, t\delta)))$.  We assume $M$ is large enough that $A \subseteq B_{v_m}$.  Let $f = f_{v_m}$ be the quasi-isometry guaranteed by the hypothesis.  We wish to estimate $\diam(A'), \diam(B'), \dist(A', B')$.  We recall the distance on $Z$ is visual, so $d(z, z') \simeq s^{-(z|z')_O}$ whenever $z,z' \in Z$.  As $|f(v_m) - O| \leq D$, we may bound  $(\cdot|\cdot)_{f(v_m)}$ instead of $(\cdot|\cdot)_O$ for our estimates.   To do this, it suffices to bound $(\cdot | \cdot)_{v_m}$ by \cite[Proposition 5.15]{GH}.  We will use \cite[Proposition 2.17]{GH}, which says if $v, w \in V_X$ and $[v,w]$ is a geodesic connecting $v$ to $w$, then $(v|w)_{v_m} = \rho(v_m, [v,w])$ up to an additive factor.  Recall we use $F$ to denote the quasisymmetry induced by $f_{v_m}$.

\fussy

First, let $F(a), F(a') \in A'$.  Then, there are geodesics $\gamma_a, \gamma_{a'}$ with limits $a$ and $a'$ in $X$.  By construction, the ball $B_{\gamma_A(\ell_A - 3)}$ contains $A$, so we may assume both $\gamma_a$ and $\gamma_{a'}$ contain $\gamma_A(\ell_A - 3)$.  As $(a|a')_{v_m} = \lim_{k \to \infty} (\gamma_a(k)|\gamma_{a'}(k))_{v_m}$ up to an additive factor, we estimate the quantities $ (\gamma_a(k)|\gamma_{a'}(k))_{v_m}$.  We have, for $k \geq \ell_A$,
\begin{equation*}
\begin{split}
2(\gamma_a(k)|\gamma_{a'}(k))_{v_m} &= |v_m - \gamma_a(k)| + |v_m - \gamma_{a'}(k)| - |\gamma_a(k) - \gamma_{a'}(k)| \\
&\geq 2(k-m) - 2(k - (\ell_A - 3)) \\
&= 2(\ell_A - 3 - m).
\end{split}
\end{equation*}
This last quantity is at least $M/4$ for large enough $M$.  It follows that $\diam(A') \to 0$ as $M \to \infty$.

Now, let $F(b), F(b') \in B'$.  We bound $-(b|O)_{v_m}$ and use the $\delta_G$-inequality $-(b|b')_{v_m} \leq -(b|O)_{v_m} \vee (-b'| O)_{v_m} + \delta_G$, where $\delta_G$ is the Gromov hyperbolicity constant for $X$.  Let $\gamma_b$ be a geodesic ray in $X$ that begins at $O$ and has limit $b$.  We may assume $b \in B_{\gamma_b(k)}$ for all $k$.  Note that the level of $\gamma_b(k)$ is $k$.

To bound $-(b|O)_{v_m}$, we bound $(\gamma_b(k)|O)_{v_m}$ for large $k$.
Let $\sigma \colon [0,L] \to X$ be a minimal geodesic from $v_m$ to $\gamma_b$.  By \cite[Lemma 2.17]{GH} it suffices to bound $L$ below.  As $b \in B_{\sigma(L)}$, we have 
\begin{equation*}
d(v_m, b) \leq \sum_{j=0}^{L} \diam(B_{\sigma(j)}) \leq \sum_{j=0}^{L} 4 s^{-m + j} \leq 4(L+1) s^{-m + L}.
\end{equation*}
As $A \subseteq B_{v_m}$, it follows that
\begin{equation*}
(t-1)\diam(A) = (t-1)\delta \leq d(A, b) \leq 4(L+1) s^{-m + L}.
\end{equation*}
Since $v_m$ has distance at least $M/3$ from $V_X \setminus H(B(a, t\delta)))$, we see $s^{-m}s^{M/3} \leq \diam(B(a, t\delta))) \leq 2t\delta$.  Combining these estimates, we have, for $t \geq 2$, 
\begin{equation*}
s^{M/3} \leq 16(L+1)s^L.
\end{equation*}
It follows that $\diam(B') \to 0$ as $M \to \infty$.

For $\dist(A', B')$, also compute from $v_m$.  Let $a \in A$ and let $b \in Z$ be such that $d(a,b) \geq 1/3$.  The point $b$ exists as $\diam(Z) = 1$.  It follows that $b \in B$.  Consider a geodesic $\sigma$ from $b$ to $a$ in $X$.  By \cite[Lemma 2.17]{GH}, there is a constant $C$ depending only on $X$ such that $\rho(O, \sigma) \leq C$. Consider a vertex $w \in \sigma$ of minimal level, so $\ell(w) \leq C$.  Consider the subpath $\gamma$ which is the portion of $\sigma$ from $\gamma(0) = w$ to $a$.  We see $|\gamma(i) - O| \geq |\gamma(i) - \gamma(0)| - |\gamma(0) - O|$ from which it follows that $\ell(\gamma(i)) \geq i - C$.  Thus, by Lemma \ref{path dist lemma}, $d(\gamma(k), a) \leq 4s \sum_{j=k}^\infty s^{-(j-C)}$.  By construction, $\ell(\gamma(i_0)) = m$ for some $i_0$.  We see $m - C \leq i_0 \leq m + C$.  Then, by Lemma \ref{path dist lemma}, 
\begin{equation*}
d(\gamma(i_0),a) \leq 4s \sum_{j=i_0}^\infty s^{-(j-C)} \leq 4s^{C+1} \sum_{j=i_0}^\infty s^{-j} \leq 4s^{C+1 - i_0} \frac{s}{s-1}.
\end{equation*}
This is bounded above by $C_1 s^{-m}$ where $C_1$ is a constant that only depends on $C$ and $s$.  Let $j_0$ be such that $s^{j_0} > C_1 + 2$.  By the construction of $X$, there is a vertex $v$ of level $m - j_0$ with $d(v, \gamma(i_0)) \leq s^{-m+j_0}$.  We see $\gamma(i_0) \in B_v$.  Now,
\begin{equation*}
\begin{split}
d(v_m, v) &\leq d(v, \gamma(i_0)) + d(\gamma(i_0), a) + d(a , v_m) \\
&\leq s^{-m+j_0} + C_1 s^{-m} + 2s^{-m} \\
&< 2s^{-m + j_0}.
\end{split}
\end{equation*}
Thus, $v_m \in B_v$, so there is a path from $v$ to $v_m$ of length $j_0$ in $X$.  Hence, ${|v_m - \gamma(i_0)| \leq 2j_0}$.  From this we see $(a|b)_m \leq 2j_0$ up to an additive constant.  For this to work, we needed $m-j_0 \geq 0$, which is possible for large $m$ as $j_0$ only depends on $s$ and $X$.  We conclude $\dist(A',B') \gtrsim s^{-2j_0}$ with implicit constants arising from the quasi-isometry constants of $f_{v_m}$ and the visual metric parameters.
\end{proof}

\sloppypar
\begin{proof}[Proof of Theorem \ref{group action thm}]
Consider the space $Z \times Z$ equipped with the product metric
\begin{equation*}
d_2((z,w),(z',w')) = (d(z,w)^2 + d(z', w')^2)^{1/2}.
\end{equation*}
Let $\epsilon > 0$ and let $U_\epsilon \subseteq Z \times Z$ be the $\epsilon$-neighborhood of the diagonal ${\{(z,z) : z \in Z\} \subseteq Z \times Z}$.  Then, $Z \times Z \setminus U_\epsilon$ is compact.  Let $a,b \in Z$ be such that $(a,b) \notin U_\epsilon$.  Then, there are positively separated open sets $A_a, B_b$ with $a \in A_a$ and $b \in B_b$.  The collection $\{A_a \times B_b\}$ forms an open cover of $Z \times Z \setminus U_\epsilon$, so we may find a finite subcover $\{A_i \times B_i\}_{i=1}^I$.  It follows that there is a $\delta > 0$ such that whenever $U \subseteq Z \times Z$ satisfies $\diam(U) < \delta$, then there is some $i$ with $U \subseteq A_i \times B_i$ (\cite[p179]{Mu}).  From this, for each $c > 0$ there is an $\epsilon' > 0$ such that if $\diam(A'), \diam(B') < \epsilon'$ and $\dist(A',B') \geq c$, then there is some $i$ such that $A' \times B' \subseteq A_i \times B_i$.

\fussy
We will use Lemma \ref{QI dist lemma}.  Let $\phi \colon X \to Y$ be a quasi-isometry with coarse inverse $\phi^{-1}$.  We assume $\phi^{-1}(Y) \subseteq V_X$.  Let $K \subseteq Y$ be a compact subset such that for every $y \in Y$, there exists $g_y \in G$ with $g_yy \in K$.  We assume $\phi(O) \in K$.  It follows that there is a constant $C_1 > 0$ such that every $y \in K$ has distance at most $C_1$ to $\phi(O)$.  For $v \in V_X$, define $\phi_v \colon V_X \to V_X$ by $\phi_v(x) = \phi^{-1}(g_{\phi(v)} \phi(x))$.  As $\phi$ is fixed and $g_y$ is an isometry for each $y$, the quasi-isometries $\phi_v$ satisfy the hypothesis of Lemma \ref{QI dist lemma} with $D = C_1$.  

From the quasi-isometry constants of $\{\phi_v\}$, we obtain $c > 0$ as in Lemma \ref{QI dist lemma}.   As above, we can then find $\epsilon' > 0$ such that if $\diam(A'), \diam(B') < \epsilon'$ and $\dist(A',B') > c$, then there is some $i$ such that $A' \times B' \subseteq A_i \times B_i$.  Applying Lemma \ref{QI dist lemma} with this $\epsilon'$, we obtain $\delta_0$ with the properties as stated in the lemma. 

Let $p > Q_w$.   Let $C = \max_i \pcap(A_i, B_i)$.  Let $t' = 2 / \delta_0$.  Suppose $A,B \subseteq Z$ are open and $\Delta(A, B) = t'$.  We may assume $\diam(A) \leq \diam(B)$.  Let $\delta = \diam(A)$.  Then, as $\diam(Z) = 1$, we have $\delta < \delta_0$.  

Suppose first that $\delta \leq s^{-10} \delta_0 / 2$.  Then, $t' \leq s^{-10}/\delta = t_1(\delta)$ and $s^{-10} / (2\delta_0) = t_0 \leq t'$.  Let $t'' = (t_0 + t')/2$.  For $a \in A, b \in B$ we have $d(a,b) \geq t' \delta$.  It follows that for fixed $a \in A$, we have $B \subseteq Z \setminus \overline{B(a, t''\delta)}$.  Thus, by Lemma \ref{QI dist lemma}, there is a quasisymmetry $F$ from $Z$ to $Z$ with $A' = F(A), B' = F(B)$ satisfying $\diam(A'), \diam(B') < \epsilon'$ and $\dist(A', B') \geq c$.  We note the control function $\eta$ for $F$ depends only on the quasi-isometry constants of $\{\phi_v\}$.  Thus, there exists some $i$ such that $A' \times B' \subseteq A_i \times B_i$.  That is, $A' \subseteq A_i$ and $B' \subseteq B_i$ so, by Theorem \ref{QS inv qcap}, $\pcap(A,B) \lesssim C$ with implicit constant not depending on $A, B$.

Now, suppose $\delta > s^{-10} \delta_0 / 2$.  Find $k$ such that $2^{-k} \delta \leq  s^{-10} \delta_0 / 2$.  As $Z$ is doubling, there exists $N(k)$ sets $A_j^k$ of diameter at most $2^{-k} \delta$ with $A \subseteq \cup_j A_j^k$.  As $Z$ is connected, we may assume $\diam(A_j^k) \geq 2^{-(k+1)} \delta$.  For each $A_j^k$, we have $\dist(A_j^k, B) \geq t'\delta - 2^{-k}\delta$.  As $k \geq 1$ and $\delta_0 \leq 1$, we see $2^{k+1} - 2 > \delta_0$ from which it follows that $(t'-2^{-k})\delta > t'2^{-k}\delta$.  Hence,
\begin{equation*}
\dist(B, A_j^k) \geq (t'-2^{-k})\delta > t'2^{-k}\delta \geq t' \diam(A_j^k)
\end{equation*}

Thus, if $a_j^k \in A_j^k$, we see 
\begin{equation*}
B \subseteq Z \setminus \overline{B(a_j^k, t' \diam(A_j^k))}.
\end{equation*}
As $t_0 \leq t'$ and 
\begin{equation*}
t' = 2/\delta_0 \leq 2^k s^{-10} / \delta \leq s^{-10} / \diam(A_j^k) = t_1(\diam(A_j^k)),
\end{equation*}
we may apply Lemma \ref{QI dist lemma} to conclude that there is a quasisymmetry $F$ from $Z$ to $Z$ with $A' = F(A_j^k), B' = F(B)$ satisfying $\diam(A'), \diam(B') < \epsilon'$ and $\dist(A', B') \geq c$.  It follows as before that $\pcap(A_j^k,B) \lesssim C$, so by subadditivity we have $\pcap(A,B) \lesssim N(k)C$.

As $k$ only depended on $\delta_0$, it follows that $\varphi_p(t') < \infty$.  Hence, $Q_w' \leq p$ and, as $p > Q_w$ was arbitrary, we see $Q_w' \leq Q_w$.  In the above, all we needed was the finiteness of $\pcap(A_i, B_i)$ for the finite number of pairs of sets $(A_i, B_i)$, so it follows that if the infimum in the definition of $Q_w$ is attained, then it is for $Q_w'$ as well.
\end{proof} 

\begin{remark}
Corollary \ref{QS unif cor} is immediate from the observations at the end of the above proof.
\end{remark}

\subsection*{CLP exponent equivalence}

In \cite{BdK} the Combinatorial Loewner Property (CLP) is investigated.  This property is a discretized version of the Loewner property discussed in Theorem \ref{qcap < qmod}; it requires that appropriately defined discrete modulus of connecting path families is controlled both below and above by relative distance.  We show that if an Ahlfors $Q$-regular metric space has the CLP with exponent $Q$, then $Q_w = Q_w' = \ARCdim = Q$.  First we define some terms from \cite{BdK} to make this precise.

We work in $(Z,d,\mu)$ a compact, connected Ahlfors $Q$-regular metric measure space.  For $k \in \N$ and $\kappa \geq 1$, a finite graph $G_k = (V_k, E_k)$ is a {\em $\kappa$-approximation on scale $k$} if it has the following properties: each $v \in V_k$ corresponds to an open set $U_v \subseteq Z$, we have $Z = \cup_{v \in V} U_v$, vertices $v$ and $w$ are connected by an edge if and only if $U_v \cap U_w \neq \emptyset$, there is an $s > 1$ such that for every $v \in V_k$ there is a $z_v \in Z$ with 
\begin{equation*}
B(z_v, \kappa^{-1} s^{-k}) \subseteq U_v \subseteq B(z_v, \kappa s^{-k})
\end{equation*}
and, if $v,w \in V_k$ are distinct, then 
\begin{equation*}
B(z_v, \kappa^{-1}s^{-k}) \cap B(z_w, \kappa^{-1}s^{-k}) = \emptyset.
\end{equation*}

In \cite{BdK} the value $s = 2$ is used.  Let $\rho \colon V_k \to [0,\infty)$.  Given a path $\gamma \subseteq Z$, the function $\rho$ is admissible for $\gamma$ if 
\begin{equation*}
\sum_{U_v \cap \gamma \neq \emptyset} \rho(v) \geq 1.
\end{equation*}
If $\Gamma$ is a path family in $Z$, we say $\rho$ is admissible for $\Gamma$ if it is admissible for all $\gamma \in \Gamma$.  The {\em $G_k$-combinatorial p-modulus} of the path family $\Gamma$ is defined by
\begin{equation*}
\Modp(\Gamma, G_k) = \inf \{\sum_{v \in V_k} \rho(v)^p : \rho \text{ admissible for } \Gamma \}.
\end{equation*}
Given two sets $A,B \subseteq Z$, we write $\Modp(A,B,G_k)$ for the $G_k$-combinatorial $p$-modulus of the path family connecting $A$ and $B$.
\begin{defn}
\label{CLP def}
The metric space $Z$ satisfies the {\em Combinatorial Loewner Property with exponent $p>1$} if there exist decreasing functions $\phi, \psi \colon (0,\infty) \to (0,\infty)$ with $\lim_{t \to \infty} \phi(t) = \lim_{t \to \infty} \psi(t) = 0$ such that for all disjoint continua $A,B \subseteq Z$ and all $k$ with $s^{-k} \leq \min\{\diam(A),  \diam(B)\}$ we have
\begin{equation*}
\phi(\Delta(A,B)) \leq \Modp(A,B,G_k) \leq \psi(\Delta(A,B)).
\end{equation*}
\end{defn}

\begin{proof}[Proof of Theorem \ref{CLP theorem}]
Let $X$ be a hyperbolic filling for $Z$ with parameter $s$.  Fix $1 < p < Q$.  From the monotonicity of $\pcap$ we may assume $A = B(a,r_a)$ and $B = B(b, r_b)$ are balls.  Let $\phi$ be as in Definition \ref{CLP def}.  We note that $Z$ is linearly connected by \cite[Proposition 2.5]{BdK}.  Hence, there is a path in $Z$ connecting $a$ and $b$ (with controlled diameter, but we just need the path).  From this path, we find continua $E \subseteq \frac{1}{2} A$ and $F \subseteq \frac{1}{2} B$.

Suppose there exists $\tau \colon V_X \to [0,1]$ which is admissible for $\pcap(A,B)$ with $\norm{\tau}_{p,\infty} < \infty$.  It follows that $\tau \in \ell^Q(V_X)$ as $p < Q$.  Thus, there is a level $N_1$ such that for all $n \geq N_1$ we have $\norm{2\tau|_n}_Q^Q < \phi(\Delta(E,F))$, where $2\tau|_n$ denotes the restriction of $\tau$ to vertices on level $n$.  For large such levels, it follows that $2\tau|_n$ is not admissible for $\Modp(E,F,X^n)$, where $X^n$ is the subgraph of $X$ consisting of vertices on level $n$ and the edges in $E_X$ which connect such vertices.  We see $X^n$ is a $\kappa$-approximation on scale $n$ with $\kappa = 2$.  Hence, there is an $N_2 \geq N_1$ such that if $n \geq N_2$, then there exists a path $\gamma_n$ in $Z$ connecting $E$ and $F$ with
\begin{equation*}
\sum_{v \in V_{\gamma_n}} \tau(v) < 1/2
\end{equation*}
where $V_{\gamma_n} = \{v \in X^n : B_v \cap \gamma_n \neq \emptyset\}$.

Choose $\beta$ such that $S(\norm{\tau}_{p,\infty}, \beta) < 1/5$ from Lemma \ref{First Path Lemma} (where we use $S$ as the corresponding bound for vertex ascending paths; see Remark \ref{vertex edge comp}).  As $\tau \in \ell^{p,\infty}(V_X)$, it follows that there is a level $N_3(\beta) \geq N_2$ such that for all vertices on levels $\geq N_3$ we have $\tau(v) < \beta$.    As $E \subseteq \frac{1}{2} A$ and $F \subseteq \frac{1}{2} B$ there is an $N_4 \geq N_3$ such that if $n \geq N_4$ and $v \in V_{X^n}$ satisfies $B_v \cap E \neq \emptyset$ or $B_v \cap F \neq \emptyset$ then $B_v \subseteq A$ or $B_v \subseteq B$.  Thus, concatenating the edge path $\gamma_n'$ provided $n \geq N_4$ with those guaranteed from Lemma \ref{First Path Lemma} produces a path $\gamma$ connecting $A$ and $B$ with 
\begin{equation*}
\sum_{v \in \gamma} \tau(v) < 1/2 + 1/5 + 1/5 < 1,
\end{equation*}
contradicting the admissibility of $\tau$.
\end{proof}

\begin{remark}
In the above setting, $Q_w = Q_w' = \ARCdim = Q$.  Indeed, Theorem \ref{CLP theorem} shows that if $p < Q$, then $p \leq Q_w$.  Thus, $Q \leq Q_w$.  As $Z$ is $Q$-regular, it follows that $\ARCdim \leq Q$.  Hence, $Q \leq Q_w \leq Q_w' \leq \ARCdim \leq Q$.
\end{remark}


\subsection*{When the Ahlfors regular conformal dimension is attained}

To prove Theorem \ref{Wk Tan Thm} we assume the metric space $(Z,d)$ attains its Ahlfors regular conformal dimension.  In this case we will show $Q_w' = \ARCdim$.  To do so, we will work with a curve family of positive modulus in a weak tangent of $Z$.  The construction of what we will use appears in \cite{CP}, while the original positive modulus result appears in \cite{KL}.  

We first define weak tangents (cf. \cite[Definition 3.4]{CP}).

\begin{defn}
Compact pointed metric spaces $(Z_k, d_k, \mu_k, z_k)$ converge to the pointed metric space $(Z_\infty, d_\infty, \mu_\infty, z_\infty)$ if there is a pointed metric space $(W, d_W, q)$ and isometric embeddings $\iota_k \colon Z_k \to W$ and $\iota \colon Z_\infty \to W$ with $\iota_n(z_k) = \iota(z_\infty) = q$ such that $\iota_n(Z_k) \to \iota(Z_\infty)$ in the sense of Gromov-Hausdorff convergence and $(\iota_k)_* \mu_k \to \iota_* \mu_\infty$ weakly.
\end{defn}

When the spaces considered are of the form $Z_k = (Z, r_k^{-1}d, z_k)$ and the above limit $(Z_\infty, d_\infty, z_\infty)$ exists, we call this limit a {\em weak tangent} of $Z$.  We assume $Z$ is Ahlfors $Q$-regular.  Then, the weak limit $\mu_\infty$ of $\{r_k^{-Q} \mu \}$ is a measure on $Z_\infty$ such that $(Z_\infty, d_\infty, \mu_\infty)$ is Ahlfors $Q$-regular.  We consider $Z_\infty$ equipped with this measure.

We also need some results relating modulus to discrete modulus.  These results are similar to what appeared in our discussion of the Combinatorial Loewner Property.

\begin{defn}[\cite{Ha}, Definition B.1]
Given a metric space $(Z,d)$ and $K \geq 1$, a cover $\mathscr{S}$ is a {\em $K$-quasi-packing} if for each $s \in \mathscr{S}$ there exists a point $z_s \in s$ and a number $r_s > 0$ such that \\
\indent (i) $B(z_s, r_s) \subseteq s \subseteq B(z_s, K r_s)$ \\
\indent (ii) Every ball $B(z_s, r_s)$ intersects at most $K$ other balls $B(z_s', r_s')$. \\
We call $B(z_s, r_s)$ the {\em internal ball} of $s$ and denote it $B(s)$.
\end{defn}

\begin{lemma}[\cite{Ha}, Lemma B.3]\label{Hai lemma}
Let $(Z, d, \mu)$ be an Ahlfors $Q$ regular metric measure space, $\mathscr{S}$ a $K$-quasi-packing, and $\Gamma$ a family of paths.  Suppose that there exists a constant $\kappa > 0$ for which, for every $s \in \mathscr{S}$ and $\gamma \in \Gamma$, if $\gamma \cap s \neq \emptyset$ then $\diam(\gamma \cap (2K)B(s)) \geq \kappa \diam(B(s))$, where $B(s)$ is the internal ball of $s$.  Then,
\begin{equation*}
\qmod \Gamma \lesssim \qmod(\Gamma, \mathscr{S}).
\end{equation*}
\end{lemma}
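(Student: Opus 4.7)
The plan is to construct, from an admissible discrete weight $\rho\colon\mathscr{S}\to[0,\infty)$ for $\qmod(\Gamma,\mathscr{S})$, a continuous density $\tilde\rho\colon Z\to[0,\infty]$ that is admissible for (the rectifiable subfamily of) $\Gamma$ and satisfies $\int_Z\tilde\rho^Q\,d\mu\lesssim\sum_s\rho(s)^Q$; taking infima then yields $\qmod\Gamma\lesssim\qmod(\Gamma,\mathscr{S})$. The natural smearing is
\[
\tilde\rho(z)\;:=\;\kappa^{-1}\sum_{s\in\mathscr{S}}\frac{\rho(s)}{\diam B(s)}\,\chi_{(2K)B(s)}(z).
\]

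To verify admissibility, I would fix a rectifiable $\gamma\in\Gamma$ parametrized by arc length. For each $s$ with $\gamma\cap s\neq\emptyset$ the hypothesis provides $\diam(\gamma\cap(2K)B(s))\geq\kappa\diam B(s)$; pairing this with the $1$-Lipschitz property of $\gamma$, and passing to a sub-arc realizing (up to a universal factor) this diameter, gives
\[
\int_\gamma\chi_{(2K)B(s)}\,ds\;\geq\;\diam\!\bigl(\gamma\cap(2K)B(s)\bigr)\;\geq\;\kappa\diam B(s).
\]
Summing over $s$ meeting $\gamma$ and invoking discrete admissibility of $\rho$ yields $\int_\gamma\tilde\rho\,ds\geq\sum_{s:\gamma\cap s\neq\emptyset}\rho(s)\geq 1$.

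For the norm estimate the crucial step is a bounded multiplicity statement for the enlarged balls: there is $N=N(K,Q)$ so that every $z\in Z$ lies in at most $N$ of the sets $(2K)B(s)$, $s\in\mathscr{S}$. Granting this, H\"older's inequality on the counting measure over $I(z)=\{s:z\in(2K)B(s)\}$ produces the pointwise bound
\[
\tilde\rho(z)^Q\;\leq\;\kappa^{-Q}N^{Q-1}\sum_{s\in\mathscr{S}}\frac{\rho(s)^Q}{(\diam B(s))^Q}\,\chi_{(2K)B(s)}(z).
\]
Integrating and applying the Ahlfors-regularity estimate $\mu((2K)B(s))\lesssim r_s^Q\simeq(\diam B(s))^Q$ gives $\int_Z\tilde\rho^Q\,d\mu\lesssim\sum_s\rho(s)^Q$, with constants depending only on $\kappa$, $K$, $Q$, and the regularity data of $(Z,\mu)$.

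The main obstacle is the multiplicity bound for the enlarged balls. From (ii) one obtains the pointwise multiplicity bound of $K+1$ for the \emph{internal} balls $\{B(z_s,r_s)\}$, since any family of them sharing a common point pairwise intersects at that point and (ii) then restricts the family's size. To lift this to the enlargements, I would stratify $I(z)$ by dyadic scales $r_s\in[2^j,2^{j+1})$: within a single scale the centers $z_s$ lie in a ball of radius comparable to $r_s$ around $z$, so Ahlfors $Q$-regularity combined with the internal multiplicity bound caps the per-scale count by some $C(K,Q)$. Bounding the number of contributing scales is the delicate point; the plan is to use the covering property to produce a distinguished $s_0\in I(z)$ whose radius $r_{s_0}$ is determined by $z$'s local scale, and then to chain overlapping internal balls via (ii) to show that the ratio of the largest to the smallest $r_s$ appearing in $I(z)$ is bounded in terms of $K$ and $Q$, yielding the uniform $N$ needed to close the argument.
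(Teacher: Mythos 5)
Your overall strategy (smear an admissible discrete weight over the dilated internal balls, check admissibility from the diameter hypothesis, and bound the $L^Q$-mass via bounded overlap plus Ahlfors regularity) is the standard route; note the paper itself does not prove this statement but imports it from [Ha, Lemma B.3]. However, two steps are genuinely problematic. First, the inequality $\int_\gamma \chi_{(2K)B(s)}\,ds \geq \diam\bigl(\gamma\cap(2K)B(s)\bigr)$ is false as stated: the subarc joining two points of $\gamma\cap(2K)B(s)$ that nearly realize the diameter need not stay inside $(2K)B(s)$, and a curve can clip a ball near two far-apart boundary points while having arbitrarily small length inside it. The correct argument uses the gap between the two dilations, which is the whole point of the $2K$ in the hypothesis: since $\gamma$ meets $s\subseteq B(z_s,Kr_s)$, either $\gamma$ stays in $(2K)B(s)$, in which case all of its length counts and is at least $\diam(\gamma\cap(2K)B(s))\geq\kappa\,\diam B(s)$, or $\gamma$ exits $(2K)B(s)$ and must cross the annulus $B(z_s,2Kr_s)\setminus B(z_s,Kr_s)$, contributing length at least $Kr_s\geq (K/2)\,\diam B(s)$ inside. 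With $\kappa'=\min(\kappa,K/2)$ in place of $\kappa$ the admissibility step goes through; as written it does not.

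The more serious gap is the multiplicity bound, which you rightly identify as the crux but only sketch, and your sketch cannot be completed from hypotheses (i)--(ii). The sets $s$ with $z\in(2K)B(s)$ are constrained only through their dilates; their internal balls need not meet one another (nor contain $z$), so there is no chain of overlapping internal balls on which to use (ii), and in fact no bound $N(K,Q)$ follows from the definition alone: on $Z=[0,1]$ take pieces $s_j=B(K2^{-j},K2^{-j})$ with internal balls $B(K2^{-j},2^{-j})$ for $j\geq j_0$ and $K\geq 3$. These internal balls are pairwise disjoint, one checks that the remaining pieces of a cover (one piece containing $0$, finitely many pieces away from $0$) can be chosen so that every internal ball meets a bounded number of others, yet $0\in(2K)B(s_j)$ for every $j$; so the dilated internal balls have unbounded multiplicity at $0$ and the ratio of scales occurring in $I(0)$ is unbounded. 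Your per-scale count is fine, but the number of scales is not controlled, and this is not a removable technicality for your density: with equal weights on $N$ nested scales one gets $\int\tilde\rho^Q\gtrsim 2^{N(Q-1)}$ against $\sum_s\rho(s)^Q\simeq N$ when $Q>1$. So the mass estimate needs input beyond (i)--(ii) — for instance comparability of the radii $r_s$, which is exactly the situation in this paper's application (the packings $\mathscr{S}_n$ have all internal radii equal, so disjointness of the internal balls plus Ahlfors regularity gives bounded overlap of the $2K$-dilates by volume counting) — or the argument of [Ha] itself. As it stands, your proposal does not close.
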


Here $\qmod(\Gamma, \mathscr{S})$ is the discrete $Q$-modulus of the curve family $\Gamma$ in the graph induced by the incidence pattern of $\mathscr{S}$; see the discussion about the Combinatorial Loewner Property above for this definition.

\begin{proof}[Proof of Theorem \ref{Wk Tan Thm}]

From \cite[Corollary 3.10]{CP} it follows that there exists a weak tangent $(Z_\infty,d_\infty)$ of $Z$ and two bounded open sets $A = B(z_\infty, 1)$ and $B = B(z_\infty, 3) \setminus \overline{B(z_\infty, 2)}$ in $Z_\infty$ with $\dist(A,B)>0$ such that $\qmod(A,B) > 0$.  By choosing a subsequence if necessary, we may assume the Gromov-Hausdorff distance of $\iota_k(Z_k)$ and $\iota(Z_\infty)$ is $< 1/k$.  We also assume $r_k \to 0$, as otherwise we are dealing with a rescaled metric of our space and the equality of $\ARCdim$ and $Q_w'$ follows from Lemma \ref{modulus Q_w bound}.

Let $X$ be a hyperbolic filling for $Z$ with parameter $s$ and write $X^n$ for the subgraph of $X$ consisting of vertices with level $n$ and edges connecting these vertices.

For $k \in \N$, let $\epsilon = 2/k$.  Let $c = 99/100$.  Define a map from $V_{X^n}$ to subsets of $Z_\infty$ by associating to $v$ the set
\begin{equation*}
S_v = \iota^{-1} (N_{\epsilon} (\iota_k(cB_v))).
\end{equation*}
Let $\mathscr{S}_n = \{S_v : S_v \cap B(z_\infty, 4) \neq \emptyset \}$.  Let $r = r(n,k) = r_k^{-1} 2s^{-n} >0$ be the radius of $B_v$ in the metric of $Z_k$.  Thus, $1/kr = r_k s^n / 2k \to 0$ as $k \to \infty$.  

We show $\mathscr{S}_n$ forms a $K$-quasi-packing of $Z_\infty \cap B(z_\infty, 4)$ for appropriate $n$.  Note $\mathscr{S}_n$ is a cover of $Z_\infty \cap B(z_\infty, 4)$ as any point $z' \in Z_\infty$ has a corresponding point $z \in Z_k$ such that $d_W(\iota_k(z), \iota(z')) < 1/k < \epsilon$ and $\cup B_v$ is a cover of $Z$. 

Given $v \in V_{X^n}$, write $B_v = B(v, 2s^{-n})$.  Define $v' \in S_v$ as follows: by definition, there exists $v_W' \in \iota(Z_\infty)$ such that $d_W(\iota_k(v), v_W') < 1/k$.  Set $v' = \iota^{-1}(v_W')$. 

\sloppypar
Now, let $z' \in B(v', r/8)$.  Let $z \in Z_k$ be such that $d_W(\iota_k(z), \iota(z')) < 1/k$.  Then, 
\begin{equation*}
\begin{split}
d_{Z_k}(v, z) &= d_W(\iota_k(v), \iota_k(z)) \\
&\leq d_W(\iota_k(v), \iota(v')) + d_W(\iota(v'), \iota(z')) +  d_W(\iota(z'), \iota_k(z)) \\
&< 2/k +  r/8.
\end{split}
\end{equation*}
Hence, $z \in (2/kr + 1/8)B_v$.  For large enough $k$ depending on $n$ the ball ${(2/kr + 1/8)B_v}$ is contained in $cB_v$, and hence for such $k$ we have $B(v', r/8) \subseteq S_v$.

\fussy
Now, suppose $z' \in S_v$, so $\iota(z') \in N_\epsilon(\iota_k(cB_v))$.  Thus, there exists $z_W \in \iota_k(cB_v)$ such that $d_W(\iota(z'), z_W) < \epsilon = 2/k$, so 
\begin{equation*}
d_W(\iota(z'), \iota_k(v)) \leq d_W(\iota(z'), z_W) + d_W(z_W, \iota_k(v)) < 2/k + cr.
\end{equation*} 
 As $d_W(\iota_k(v), \iota(v')) < 1/k$ it follows that $d_\infty(z', v') = d_W(\iota(z'), \iota(v')) < 3/k + cr$.  Thus, $S_v \subseteq B(v', Kr)$ where $K = (3/kr) + 1$.  For large enough $k$ we then have $S_v \subseteq B(v', 2r)$.

For condition (ii), suppose $u, v \in V_{X^n}$ and $z' \in B(u', r/8) \cap B(v', r/8)$, where $u'$ is defined as $v'$ is above.  We note $d_{Z_k}(u,v) \geq r/2$ by construction.  We see 
\begin{equation*}
d_W(\iota_k(v), \iota(z')) \leq d_W(\iota_k(v), \iota(v')) + d_W(\iota(v'), \iota(z')) < 1/k + r/8
\end{equation*}
and likewise for $d_W(\iota_k(u), \iota(z'))$.  Hence, $r/2 < 2/k + r/4$ and so $1/4 < 2/kr$ which is impossible for $k$ large enough.  Thus, $B(v', r/8) \cap B(u', r/8) = \emptyset$ for $k$ large.

We conclude $S_v$ forms a $K$-quasi-packing of $Z_\infty \cap B(z_\infty, 4)$ with internal balls $B(v', r/8)$ and $K = 16$. We now look at the incidence pattern of $\mathscr{S}_n$.  Suppose $z' \in S_v \cap S_u$.  Then, $\iota(z') \in N_\epsilon(\iota_k(cB_v)) \cap N_\epsilon(\iota_k(cB_u))$.  Let $z \in Z_k$ be such that $d_W(\iota_k(z), \iota(z')) < 1/k$.  Then, $d_{Z_k}(z, cB_v) < 1/k + \epsilon = 3/k$ and likewise for $cB_u$.  Thus, if $3/k < r/100$ we have the implication $S_v \cap S_u \neq \emptyset \implies B_v \cap B_u \neq \emptyset$.

Note all of the above conclusions were true for $k$ large enough that $1/kr$ is small.  We apply \cite[Lemma B.3]{Ha}, which will give us an upper bound on $k$ depending on $n$.  We note any path $\gamma$ joining $A$ and $B$ in $Z_\infty$ has $\diam(\gamma) > 1/2$.  In \cite[Lemma B.3]{Ha} we have $\diam(B(s)) \leq r/4$ and if $\gamma \cap S_v \neq \emptyset$ and $\diam(\gamma) > r$ then $\diam(\gamma \cap (2K)B(s) ) \geq r$.  Hence, if $1/2 > r$ we may apply \cite[Lemma B.3]{Ha} to the path family $\Gamma$ connecting $A$ and $B$ to conclude that there is a constant $C_{mod} > 0$ such that $C_{mod} < \qmod(\Gamma,\mathscr{S}_n)$.

Let $N \in \N$ and $\theta > 0$.  Recall $r = r_k^{-1} 2s^{-n}$.  We show that we can choose $k$ large enough so that both $1/kr < \theta$ and $r < 1/2$ for at least $N$ values of $n$.  Substituting yields the conditions $r_k s^n / 2 k <\theta$ and $4 < r_k s^n$.  Hence, $s^n \in [4/r_k, 2 \theta k / r_k]$.  Taking logarithms, there are at least $\log_s (\theta k r_k / (8 r_k)) - 1 = \log_s(\theta k / 8) - 1$ such values of $n$, so such a $k$ exists.  In what follows we will use a lower interval bound of $32/r_k$ instead of $4/r_k$, but the same conclusion holds.

We set $V_A^k = \{v : \ell(v) \geq \log_s(32/r_k) \text{ and } d_W(\iota_k(B_v), \iota(A)) < 1/4\},$ set $ A_k = \bigcup_{V_A^k} B_v$, and define $V_B^k$ and $B_k$ similarly.  Let $v \in V_A^k$ and $u \in V_B^k$ and let $z_v \in B_v$ and $z_u \in B_u$.  We estimate $d(z_v, z_u)$.  Note for $n_k = \log_s(32/r_k)$ we have $\diam(\iota_k(B_v)) \leq 2r_k^{-1} 2s^{-n_k}$.  Hence, $d_W(\iota_k(z_v), \iota(A)) \leq 2 r_k^{-1} 2s^{-n_k} + 1/4$.  A similar bound holds for $\iota_k(z_u)$ and $\iota(B)$ and we know $d_W(\iota(A), \iota(B)) \geq 1$.  Hence, $d_W(\iota_k(z_v), \iota_k(z_u)) \geq 1 - 2(r_k^{-1} 4s^{-n_k} + 1/4)$.  It follows from the fact that $d_{Z_k} = r_k^{-1}d$ that $d(z_v, z_u) \geq r_k (1/2 - 8 r_k^{-1} s^{-n_k})$.  This is positive as $s^{n_k} \geq 32 / r_k$.  Now, the minimal diameter of $A_k$ and $B_k$ is bounded below by the diameter of a ball in the smallest level of $V_A^k$ and $V_B^k$, which is comparable to $r_k$ as $s^{n_k} \simeq 32/r_k$ for such balls.  Hence, 
\begin{equation*}
\Delta(A_k, B_k) \lesssim \frac{r_k (1/2 - 8r_k^{-1} s^{-n_k})}{r_k}
\end{equation*}
and so the sets $A_k$ and $B_k$ have controlled relative distance as $k \to \infty$.

Let $a > 0$.  We show that if $k$ is large enough, then $\pcap(A_k, B_k) > a$.  Apply Lemma \ref{Last Path Lemma} with $\delta = 1/5$ and $M$ given by Lemma \ref{Vertex splitting lemma} applied to $X$ to obtain $\beta$ and $\ell_0$.  For $k \in \N$, let $N_k$ be the number of consecutive values $n$ such that $s^n \geq 32/r_k$ and the combinatorial properties derived for $\mathscr{S}_n$ above hold, so $N_k \to \infty$ as $k \to \infty$ by the above.  Call these levels the {\em admissible range} of $k$.  We choose $k$ so that $N_k$ is large enough for the following argument; the threshold for $N_k$ will only depend on $a$, the hyperbolic filling parameters,  $\beta$, and $\ell_0$.

Suppose that $\tau \colon V_X \to [0, 1]$ is a vertex function with $\norm{\tau}_{p,\infty}^p \leq a$.  As $p < Q$, it follows that there exists a constant $C(a)$ such that $\norm{\tau}_Q^Q < C(a)$.  Hence, there exists $L_1 = L_1(a, C_{mod}) \in \N$ such that at most $L_1$ levels $n$ satisfy $\norm{2\tau|_{V_{X^n}}}_Q^Q \geq C_{mod}$.  Similarly, as there exists $L_2 = L_2(a,\beta)$ such that at most $L_2$ levels have a connecting edge $e$ such that $\tau(e) \geq \beta$.  Thus, if $N_k$ is large enough, there exists a level $n_0$ with $\norm{2\tau|_{V_{X^{n_0}}}}_Q^Q < C_{mod}$, the property that any vertex $v$ on $n_0, \dots, n_0 + \ell_0 + 1$ has $\tau(v) < \beta$, and the property that the levels $n_0, \dots, n_0 + \ell_0 + 1$ lie in the admissible range of $k$.

\sloppypar
Define $g \colon \mathscr{S}_{n_0} \to [0,\infty)$ by $g(S_v) = 2\tau(v)$.  Now, as $\norm{2\tau|_{V_{X^{n_0}}}}_Q^Q < C_{mod}$, the function $g$ is not admissible for discrete modulus connecting $A$ and $B$.  Hence, there is a chain of vertices $\hat{\gamma} = \{v_0, \dots, v_J\}$ with $S_{v_0} \cap A \neq \emptyset$, $S_{v_J} \cap B \neq \emptyset$, and $\sum_{j} g(S_{v_j}) < 1/2$.  We claim $B_{v_0} \subseteq A_k$ and $B_{v_J} \subseteq B_k$ as long as $k > 8$.  We prove the first of these claims as the second is similar.  Let $z' \in S_{v_0} \cap A$.  Then, $d_W(\iota_k(B_{v_0}), \iota(A)) \leq d_W(\iota_k(B_{v_0}), \iota(z'))$.  As $z' \in S_{v_0}$, it follows that $d_W(\iota(z'), \iota_k(cB_{v_0})) < 2/k$.  Thus, $d_W(\iota_k(B_{v_0}), \iota(A)) < 2/k < 1/4$ if $k > 8$, so $B_{v_0} \subseteq A_k$.

\fussy

Recall that if $S_v \cap S_u \neq \emptyset$ then $B_v \cap B_u \neq \emptyset$.  Thus, the same path $\hat{\gamma}$ can be traced in $V_{X^{n_0}}$ and hence we can extract from this path tracing a path $\gamma$ connecting $B_{v_0}$ and $B_{v_J}$ such that $\sum_{v \in \gamma} \tau(v) < 1/2$.  We apply Lemma \ref{Last Path Lemma} to find two paths connecting $v_0$ and $v_J$ to $A_k$ and $B_k$ with $\tau$-length $< 1/5$.  Concatenating these paths yields a path connecting $A_k$ to $B_k$ with $\tau$-length $< 1$, so $\tau$ cannot be admissible for $\pcap(A_k, B_k)$.  As $\tau$ with $\norm{\tau}_{p,\infty}^p \leq a$ was arbitrary, it follows that $\pcap(A_k, B_k) \geq a$.  Hence, $\lim_{k \to \infty} \pcap(A_k, B_k) = \infty$.
\end{proof}

\begin{remark}
From Theorem \ref{Wk Tan Thm} we conclude that in this setting $Q_w' \geq Q = \ARCdim$.  Hence, $Q_w' = \ARCdim$ in this case.
\end{remark}



\begin{thebibliography}{aaa}
\bibitem [BnK]{BnK} M.~Bonk and B.~Kleiner, {\em Quasisymmetric parametrizations of two-dimensional metric spheres}, Invent. Math. 150 (2002), no. 1, 127–183.

\bibitem[BnS]{BnS} M.~Bonk and E.~Saksman, {\em Sobolev spaces and hyperbolic fillings},   J. Reine Angew. Math., to appear.

\bibitem[BdK]{BdK} M.~Bourdon and B.~Kleiner, {\em  Combinatorial modulus, the combinatorial Loewner property, and Coxeter groups},  Groups Geom. Dyn. 7 (2013), no. 1, 39--107.

\bibitem[BdP]{BdP} M.~Bourdon and H.~Pajot, {\em  Cohomologie $l_p$ et espaces de Besov},   J. Reine Angew. Math. 558 (2003), 85--108.

\bibitem[CP]{CP} M.~Carrasco Piaggio, {\em On the conformal gauge of a compact metric space}, Ann. Sci. \'Ec. Norm. Sup\'er. (4) 46 (2013), no. 3, 495--548.

\bibitem [DS]{DS} G.~David and S.~Semmes, {\em Fractured Fractals and Broken Dreams: Self-Similar Geometry Through Metric and Measure}, Oxford Lecture Series in Mathematics and its Applications, 7.  The Clarendon Press, Oxford University Press, New York, 1997.

\bibitem [GH]{GH} \'E.~Ghys and P.~de la Harpe, {\em Sur les groupes hyperboliques d'apr\`es Mikhael Gromov}, Progress in Mathematics, 83. Birkh\"auser Boston, Inc., Boston, MA, 1990.

\bibitem [GW]{GW} L.~Geyer and K.~Wildrick, {\em Quantitative quasisymmetric uniformization of compact surfaces}, Proc. Amer. Math. Soc. , published electronically, https://doi.org/10.1090/proc/13722.

\bibitem[Ha]{Ha} P.~Ha\"isinsky {\em Empilements de cercles et modules combinatoires}, Ann. Inst. Fourier (Grenoble) 59 (2009), no. 6, 2175--2222. 

\bibitem[He]{He} J.~Heinonen, {\em Lectures on analysis on metric spaces}, Universitext. Springer-Verlag, New York, 2001.

\bibitem[HK]{HK} J.~Heinonen and P.~Koskela, {\em Quasiconformal maps in metric spaces with controlled geometry}, Acta Math. 181 (1998), 1--61.

\bibitem[HP]{HP} P.~Ha\"isinsky and K.~M.~Pilgrim, {\em Minimal Ahlfors regular conformal dimension
of coarse conformal dynamics on the sphere}. Duke Math J. 163 (2014), no.13, 2517--2559.

\bibitem[HKST]{HKST} J.~Heinonen,  P.~Koskela, N.~Shanmugalingam, and J.~Tyson, {\em Sobolev spaces on metric measure spaces.  An approach based on upper gradients}, New Mathematical Monographs, 27. Cambridge University Press, Cambridge, 2015.

\bibitem[KL]{KL} S.~Keith and T.~Laakso, {\em Conformal Assouad Dimension and Modulus},  Geom. Funct. Anal. 14 (2004), no. 6, 1278--1321.

\bibitem [L1]{L1} J. Lindquist, {\em Weak capacity and modulus comparability in Ahlfors regular metric spaces}, Anal. Geom. Metr. Spaces, 4, (2016) no. 1, 399--424.

\bibitem [L2]{L2} J. Lindquist, {\em Bilipschitz equivalence of trees and hyperbolic fillings}, https://arxiv.org/abs/1702.08762.

\bibitem[Mu]{Mu} J.~Munkres. {\em Topology: A first course} Englewood Cliffs, N.J., Prentice-Hall, 1974.

\bibitem[Su]{Su} D.~Sullivan {\em Hyperbolic geometry and homeomorphisms}.   Geometric topology (Proc. Georgia Topology Conf., Athens, Ga., 1977), Academic Press, New York-London, (1979), 543--555.

\bibitem [V\"a]{Va} J.~V\"ais\"al\"a. {\em Quasi-M\"obius maps}, J. Analyse Math. 44 (1984/85), 218--234. 

\end{thebibliography}
\end{document}